\newtheorem{theorem}{Theorem}[section]
\newtheorem{lemma}[theorem]{Lemma}
\newtheorem{proposition}[theorem]{Proposition}
\newtheorem{remark}[theorem]{Remark}
\newtheorem{assumption}{Assumption}
\newcommand{\R}{\mathbb{R}}
\newcommand{\N}{\mathbb{N}}
\numberwithin{equation}{section}
\newcommand{\Leja}{{L\'{e}ja}\xspace}
\newcommand{\Pade}{{Pad\'e}\xspace}
\newcommand{\SETDLR}{{(\textbf{SETD0})}\xspace}
\newcommand{\SETD}{{(\textbf{SETD1})}\xspace}
\newcommand{\HH}{\mathbb{H}}
\newcommand{\Dt}{\Delta t}
\newcommand{\thmref}[1]{{Theorem~\ref{#1}}}
\newcommand{\lemref}[1]{{Lemma~\ref{#1}}}
\newcommand{\figref}[1]{{Figure~\ref{#1}}}
\newcommand{\secref}[1]{{Section~\ref{#1}}}
\newcommand{\assref}[1]{{Assumption~\ref{#1}}}
\newcommand{\propref}[1]{{Proposition~\ref{#1}}}
\begin{document}
\title[Stochastic Exponential Integrators]{Stochastic Exponential Integrators for a Finite Element Discretization of  SPDEs}
\author{Gabriel J. Lord}
\email{G.J.Lord@hw.ac.uk}
\address{Department of Mathematics and the Maxwell Institute for Mathematical Sciences, Heriot Watt University, Edinburgh EH14 4AS, U.K }
\author{Antoine Tambue}
\email{at150@hw.ac.uk}
\address{Department of Mathematics and the Maxwell Institute for Mathematical Sciences, Heriot Watt University, Edinburgh EH14 4AS, U.K }

\begin{abstract}
 We consider the numerical approximation of general semilinear
 parabolic stochastic partial differential equations (SPDEs) driven by
 additive space-time noise. 
 In contrast to the standard  time stepping methods which 
 uses basic increments of the noise and the approximation of the
 exponential function  by a rational fraction, we introduce a new
 scheme, designed for finite elements, finite volumes or finite
 differences  space discretization,
 similar to the schemes in \cite{Jentzen3,Jentzen4} for spectral
 methods and \cite{GTambue} for finite element methods. 
 We use the projection operator, the smoothing
 effect of the positive definite self-adjoint operator  and 
 linear functionals of the noise in Fourier space to obtain  higher
 order approximations. 
 We consider  noise that is white in time  and either in $H^1$ or $H^2$
 in space and give convergence proofs in the mean square $L^{2}$ norm for
 a diffusion reaction equation and in mean  square $ H^{1}$ norm in
 the presence of an advection term. 
 For the exponential integrator we rely on computing the exponential of
 a non-diagonal matrix. In our numerical results we use two different
 efficient techniques: the real fast \Leja points and Krylov subspace
 techniques.  We present results for a linear  reaction diffusion equation  
 in two dimensions as well as a nonlinear example of two-dimensional
 stochastic advection diffusion reaction equation motivated from
 realistic porous media flow. 
\end{abstract}

\maketitle
\section{Introduction}
We consider the strong numerical approximation of 
Ito stochastic partial differential equations 
\begin{eqnarray}
  \label{adr}
  dX=(AX +F(X, \nabla X))dt + d W
\end{eqnarray}
in a Hilbert space $H= L^{2}(\Omega)$, where $\Omega\subset 
\mathbb{R}^{d}$ and $ t \in [0, T]$, $T>0$ and initial data
$X(0)=X_{0}$ is given. The linear operator $A$ is
the generator of an analytic semigroup $S(t):=e^{t A}, t\geq 0\;$ with eigenfunctions $e_i,\;i\in
\mathbb{N}^{d}$ and
$F$ is a nonlinear function. 
The noise can be
represented as a series in the eigenfunctions of the covariance
operator $Q$ and we assume for convenience  that $Q$ and $A$ have the same
eigenfunctions (without loss the generality as the orthogonal
projection can be used). In which case \cite{DaPZ}  we have 
\begin{eqnarray}
  \label{eq:W}
  W(x,t)=\underset{i \in
    \mathbb{N}^{d}}{\sum}\sqrt{q_{i}}e_{i}(x)\beta_{i}(t), 
\end{eqnarray}
where $q_i$, $i\in \mathbb{N}^{d}$ are the eigenvalues
of the covariance operator $Q$. The $\beta_{i}$ are
independent and identically distributed standard Brownian motions.

The study of numerical solutions of SPDEs is an active 
research area and there is an extensive literature on numerical
methods for SPDEs. Recent work by Jentzen and co-workers
\cite{Jentzen1,Jentzen2,Jentzen3,Jentzen4} uses the Taylor 
expansion and linear functionals of the noise for Fourier--Galerkin
discretizations of \eqref{adr}. In these schemes the diagonalization
of the operator $A$ through the discretization plays a key role.
Using a linear functional of the noise overcomes the order barrier
encountered using a standard increment of Wiener process \cite{Jentzen3}.
In \cite{GTambue} the use of linear functionals of the noise is
extended to finite--element discretizations (where the operator does
not diagonalize) with a semi-implicit Euler--Maruyama method.
In contrast to \cite{GTambue} here we consider two exponential based
methods for time-stepping as in \cite{LR,LT,Jentzen3,Jentzen4,KLNS}. 
We prove a strong convergence result for two versions of the scheme
with noise that is white in time and in $H^1$ and $H^2$ in
space that shows that the exponential integrators are more accurate
that the semi-implicit Euler-Maruyama method. 
Furthermore we have weaker restrictions on the regularity of the
initial data and high accuracy for linear problems comparing to the scheme
in  \cite{GTambue}.  The cost of the extra accuracy though is that to
implement these methods we need to compute the exponential of a
non--diagonal matrix, which is a notorious problem in numerical
analysis \cite{CMCVL}. 
However, new developments for both \Leja points and Krylov subspace
techniques \cite{kry,SID,Antoine,LE2,LE1,LE} have led to efficient
methods for computing matrix  exponentials.
Compared to the Fourier-Galerkin methods of
\cite{Jentzen1,Jentzen2,Jentzen3,Jentzen4} we gain the  flexibility of
finite element (or finite volume methods) to deal with complex boundary
conditions and we can apply well developed techniques such as
upwinding to deal with advection. 

We consider two examples of \eqref{adr} where $A$ is the second order
operator $D \varDelta $ and $D>0$ is the diffusion coefficient. For the
first example 
\begin{eqnarray}
  dX=\left(D \varDelta X - \lambda X \right)dt+ dW
\end{eqnarray}
where $\lambda$ is a constant, we can construct an exact solution. Our
second example is a stochastic advection diffusion reaction equation in
a heterogeneous porous media  
\begin{eqnarray}
  dX=\left(D \varDelta X -  \nabla \cdot (\textbf{q}
    X)-\frac{X}{X+1}\right)dt+ dW 
\end{eqnarray}
where $\textbf{q}$ is the Darcy velocity field \cite{sebastianb}.
In the linear example we take Neumann boundary conditions and for the
example from porous media we take a mixed Neumann--Dirichlet boundary
condition. 

The paper is organised as follows. In \secref{sec:scheme} we
present the two  numerical schemes based on the exponential
integrators and our assumptions on \eqref{adr}. We present and comment
on our convergence results. In \secref{sec:proofs}  we present
the proofs of our convergence theorems. 
We conclude in \secref{sec:sim} by presenting some simulations and
discuss implementation of these methods. 
\section{Numerical scheme and main results}
\label{sec:scheme}

We start by introducing our notation. We denote by $\Vert \cdot \Vert$
the norm associated to the standard inner product $(\cdot ,\cdot )$ of the
Hilbert space $H=L^{2}(\Omega)$ and $\Vert \cdot \Vert_{H^{m}(\Omega)}
$ the norm of the Sobolev space $H^{m}(\Omega)$, for $m\in\R$. 
For a Banach space $\mathcal{V}$ we denote by
$L(\mathcal{V})$  the set of bounded linear mapping  from
$\mathcal{V}$ to $\mathcal{V}$ and $L^{(2)}(\mathcal{V})$ the set of
bounded  bilinear mapping from  $\mathcal{V} \times \mathcal{V}$ to
$\mathbb{C}$.  We introduce further spaces and notation below as required. 

Consider the stochastic partial differential equation (\ref{adr}), 
under some technical assumptions it is well known (see
\cite{DaPZ,PrvtRcknr,Jentzen1} and references therein) that the
unique mild solution is given by  
\begin{eqnarray}
\label{eq1}
  X(t) &=& S(t)X_{0}+\int_{0}^{t}S(t-s)F(X(s))ds+O(t),
\end{eqnarray}
with the stochastic process $O$ given by the stochastic convolution 
\begin{eqnarray}
O(t)=\int_{0}^{t}S(t-s)dW(s).
\end{eqnarray}

We consider discretization of the spatial domain by a finite element 
triangulation.
Let $\mathcal{T}_{h}$ be a set of disjoint intervals  of $\Omega$ 
(for $d=1$), a triangulation of $\Omega$ (for $d=2$) or a set of
tetrahedra (for $d=3$). Let $V_{h}$ denote the space of continuous
functions that are piecewise linear over 
$\mathcal{T}_{h}$. To discretize in space we introduce two
projections. Our first projection operator $P_h$ is the
$L^{2}(\Omega)$ projection onto $V_{h}$ defined  for  $u \in L^{2}(\Omega)$ by 
\begin{eqnarray}
  (P_{h}u,\chi)=(u,\chi)\qquad \forall\;\chi \in V_{h}.
\end{eqnarray}
We can then define the operator $A_{h}: V_{h}\rightarrow V_{h}$, the discrete
analogue of $A$, by 
\begin{eqnarray}
( A_{h}\varphi,\chi)=(A\varphi,\chi)\qquad \varphi,\chi \in V_{h}.
\end{eqnarray}
We denote by $S_h(t):= e^{tA_h}$ the semigroup generated by the
operator $A_h$.
The second projection $P_{N}$, $N \in \mathbb{N} $ is the projection
onto a finite number of spectral modes ${e_i}$ defined for $u\in
L^{2}(\Omega)$ by  
$$u^N := P_{N}u=\sum_{i=1}^{N} (e_{i},u) e_{i}.$$
Furthermore we can project the operator $A$
$$A_N = P_N A \quad \text{and} \quad  S_{N}(t):=e^{t A_{N}}.$$

We discretize in space using finite elements and project the noise
first onto a finite number of modes and then onto the finite element
space. The semi-discretized version of \eqref{adr} is to find the
process $X^{h}(t)=X^{h}(.,t) \in V_{h}$ such  that 
\begin{eqnarray}
\label{dadr}
  dX^{h}=(A_{h}X^{h} +P_{h}F(X^{h}))dt + P_{h}P_{N}d W,
\quad  X^{h}(0)=P_{h}X_{0}.
\end{eqnarray}
The  mild solution of (\ref{dadr})  at  time $t_{m}=m \Delta t $,
$\Delta t>0$ is given by  
\begin{eqnarray*}
\label{dmild}
  X^{h}(t_{m})=S_{h}(t_{m})P_{h}X_{0}+\int_{0}^{t_{m}} S_{h}(t_{m}-s)
  P_{h}F(X^{h}(s)) ds + \int_{0}^{t_{m}} S_{h}(t_{m}-s)P_{h}d
  W^{N}(s). 
\end{eqnarray*}
Given the mild solution at the time $t_{m}$, we can construct
the corresponding solution at $t_{m+1}$ as 
\begin{eqnarray}
\label{Xhint}
 X^{h}(t_{m+1}) &=& S_{h}(\Delta t)X^{h}(t_{m})+\int_{0}^{ \Delta t}
 S_{h}( \Delta t-s) P_{h}F(X^{h}(s+t_{m}))ds \nonumber \\
& & + \int_{t_{m}}^{t_{m+1}} S_{h}(t_{m+1}-s)P_{h}d W^{N}(s). 
\end{eqnarray}

For our first numerical scheme \SETD, we use the following
approximations 
\begin{eqnarray*}
  F(X^{h}( t_{m}+s)) &\approx&  F(X^{h}( t_{m}))\qquad s \in [0,\;
  \Delta t],
\end{eqnarray*}
and 
\begin{eqnarray*}
  \int_{t_{m}}^{t_{m+1}} S_{h}(t_{m+1}-s)P_{h}d W^{N}(s)&\approx&
  P_{h}\int_{t_{m}}^{t_{m+1}} S_{N}(t_{m+1}-s)d W^{N}(s)\\
  &=&P_{h} P_{N}
  \int_{t_{m}}^{t_{m+1}}S(t_{m+1}-s)dW(s).
\end{eqnarray*}
Then we approximate $X_{m}^{h}$ of  $X(m \Delta t)$ by 
\begin{eqnarray}
\label{new}
 X_{m+1}^{h}&=&e^{\Delta t A_{h}}X_{m}^{h}+\Delta t\varphi_{1}(\Delta t A_{h})P_{h}F(X_{m}^{h}) \\ &+& P_{h}\int_{t_{m}}^{t_{m+1}} 
 e^{(t_{m+1}-s)A_{N}}d W^{N}(s)\nonumber
\end{eqnarray}
where 
$$\varphi_{1}(\Delta t A_{h})=(\Delta t\, A_{h})^{-1}\left( e^{\Delta
    t A_{h}}-I\right)= \frac{1}{\Delta t}\int_{0}^{\Delta t}
e^{(\Delta t- s)A_{h}}ds. $$ 
For efficiency to avoid computing two matrix exponentials we can
rewrite the scheme (\ref{new}) as 
$$
  X_{m+1}^{h} =  X_{m}^{h}+\Delta t \varphi_{1}(\Delta t
 A_{h})\left(A_{h}X_{m}^{h}\\+P_{h}F(X_{m}^{h})\right)+P_{h}\int_{t_{m}}^{t_{m+1}} e^{(t_{m+1}-s)A_{N}}d W^{N}(s).
$$
We call this scheme (\textbf{SETD1}).

Our second numerical method \SETDLR is similar to the one in
\cite{LR,LT,KLNS}.  It is based on approximating the deterministic
integral in \eqref{Xhint} at the left--hand endpoint of each partition
and the stochastic integral as 
follows
\begin{eqnarray*}
\int_{t_{m}}^{t_{m+1}} S_{h}(t_{m+1}-s)P_{h}d W^{N}(s) & \approx &
P_{h}\int_{t_{m}}^{t_{m+1}} S_{N}(t_{m+1}-s)d W^{N}(s)\\&=&P_{h} P_{N}
\int_{t_{m}}^{t_{m+1}}S(t_{m+1}-s)dW(s). 
\end{eqnarray*}
With this we can define the \SETDLR approximation $Y_{m}^{h}$ of  $X(m
\Delta t)$ by  
\begin{eqnarray}
\label{new0}
\qquad Y_{m+1}^{h}=\varphi_{0}(\Delta t
A_{h})\left(Y_{m}^{h}+\Dt P_{h} F(Y_{m}^{h})\right)+
P_{h}\int_{t_{m}}^{t_{m+1}} e^{(t_{m+1}-s)A_{N}}d W^{N}(s) 
\end{eqnarray}
where   
$$\varphi_{0}(\Delta t A_{h})=e^{\Delta t A_{h}}.$$

If we project  the eigenfunctions of $Q$ onto  the eigenfunctions of
the linear operator $A$ then by a Fourier spectral method the process 
$$
 \widehat{O}_{k}=\int_{t_{k}}^{t_{k+1}} e^{(t_{k+1}-s)A_{N}}d W^{N}(s)
$$
is reduced to an Ornstein--Uhlenbeck process in each Fourier mode as
in \cite{Jentzen3} and we therefore know the exact variance in each
mode. We comment further on the implementation in \secref{sec:sim}.
%
%
We describe now in detail the assumptions that we make
on the linear operator $A$, on our finite element discretization, the
nonlinear term $F$ and the noise $dW$.
\begin{assumption}
  \label{assumption1} 
  Let the linear operator $-A$ be a self adjoint positive
  definite operator and $A$ generate an analytic semigroup $S$. Then
  there exist sequences of  positive 
  real eigenvalues $\{\lambda_{n}\}_{n\in \mathbb{N}^{d}}$
  and an orthonormal basis in $H$ of eigenfunctions 
  $\{e_{i}\}_{i \in\mathbb{N}^{d} }$ such that the linear operator
  $-A:\mathcal{D}(-A)\subset H \rightarrow H $ is represented as 
  \begin{eqnarray*}
    -Av=\underset{i \in \mathbb{N}^{d}}{\sum}\lambda_{i}(
    e_{i},v)e_{i}\qquad \forall \quad v \in  \mathcal{D}(-A)  
  \end{eqnarray*}
  where the domain of $-A$, $\mathcal{D}(-A)=\{ v \in H :\underset{
    i\in \mathbb{N}^{d}}{\sum}\lambda_{i}^{2} |( e_{i},v) |<
  \infty\}$ and $ \underset{i \in \mathbb{N}^{d}}{\inf}\lambda_{i} > 0$ .
\end{assumption}
Note that for convenience of presentation we take $A$ to be a second order
operator as this simplifies notation for the norm equivalence below. 
Similar result hold, however, for higher order operators. 
We recall some basic properties of the semigroup $S(t)$ generated by
$A$ that may be found for example in \cite{Henry,Pazy}. 

\begin{proposition}[\cite{Henry}]
\label{prop1}
 Let $\beta \geq 0 $ and $0\leq \gamma \leq 1$, then  there exist
 $C>0$ such that 
\begin{eqnarray*}
 \Vert (-A)^{\beta}S(t)\Vert &\leq& C t^{-\beta}\;\;\;\;\; \text {for }\;\;\; t>0\\
  \Vert (-A)^{-\gamma}( \text{I}-S(t))\Vert &\leq& C t^{\gamma} \;\;\;\;\; \text {for }\;\;\; t\geq0.
\end{eqnarray*}
In addition,
\begin{eqnarray*}
(-A)^{\beta}S(t)&=& S(t)(-A)^{\beta}\quad \text{on}\quad \mathcal{D}((-A)^{\beta} )\\
\text{If}\;\;\; \beta &\geq& \gamma \quad \text{then}\quad
\mathcal{D}((-A)^{\beta} )\subset \mathcal{D}((-A)^{\gamma} ). 
\end{eqnarray*}
\end{proposition}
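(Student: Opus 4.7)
The plan is to exploit the spectral decomposition provided by Assumption~\ref{assumption1}: since $-A$ is self-adjoint and positive definite with an orthonormal eigenbasis $\{e_i\}$ and eigenvalues $\{\lambda_i\}$ bounded below by some $\lambda_0>0$, we have
$$S(t)v=\sum_{i}e^{-\lambda_i t}(e_i,v)e_i,\qquad (-A)^{\beta}v=\sum_{i}\lambda_i^{\beta}(e_i,v)e_i,$$
which immediately gives $(-A)^{\beta}S(t)v=\sum_i\lambda_i^{\beta}e^{-\lambda_i t}(e_i,v)e_i$ on $\mathcal{D}((-A)^{\beta})$. The commutation identity then follows by reading this formula in either order, and the extension from $\mathcal{D}((-A)^{\beta})$ to all of $H$ for the operator bound is obtained by density and the estimate below.

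For the first inequality I would reduce the operator norm to a scalar optimization. By Parseval,
$$\|(-A)^{\beta}S(t)v\|^{2}=\sum_{i}\lambda_i^{2\beta}e^{-2\lambda_i t}\,|(e_i,v)|^{2}\le\Bigl(\sup_{\lambda>0}\lambda^{\beta}e^{-\lambda t}\Bigr)^{2}\|v\|^{2}.$$
Elementary calculus gives $\sup_{\lambda>0}\lambda^{\beta}e^{-\lambda t}=(\beta/e)^{\beta}t^{-\beta}$ for $\beta>0$ (and the bound is trivial for $\beta=0$), hence the first claim with $C=(\beta/e)^{\beta}$.

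For the second inequality I would use the same Parseval step and bound the scalar function $g(\lambda):=\lambda^{-\gamma}(1-e^{-\lambda t})$ uniformly in $\lambda\ge\lambda_0$. Splitting at $\lambda t=1$, one has $1-e^{-\lambda t}\le\lambda t$ when $\lambda t\le 1$, which yields $g(\lambda)\le\lambda^{1-\gamma}t\le t^{\gamma}$ since $0\le 1-\gamma\le 1$; and $1-e^{-\lambda t}\le 1$ when $\lambda t>1$, giving $g(\lambda)\le\lambda^{-\gamma}\le t^{\gamma}$. Thus $\sup_\lambda g(\lambda)\le C t^{\gamma}$ uniformly for $0\le\gamma\le 1$, and the operator bound follows.

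Finally, for the domain inclusion, if $v\in\mathcal{D}((-A)^{\beta})$ with $\beta\ge\gamma$, I would write $\lambda_i^{2\gamma}=\lambda_i^{2(\gamma-\beta)}\lambda_i^{2\beta}$ and use $\lambda_i\ge\lambda_0>0$ with the nonpositive exponent $2(\gamma-\beta)$ to conclude $\lambda_i^{2\gamma}\le\lambda_0^{2(\gamma-\beta)}\lambda_i^{2\beta}$, so $\sum_i\lambda_i^{2\gamma}|(e_i,v)|^{2}<\infty$ and $v\in\mathcal{D}((-A)^{\gamma})$. None of the steps are genuinely hard; the only point requiring care is the uniform scalar estimate for $g(\lambda)$ across the two regimes, and the implicit use of the spectral gap $\lambda_0>0$ from Assumption~\ref{assumption1}, which is what forces the constant in the domain inclusion to be finite.
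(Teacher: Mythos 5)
Your proof is correct. Note, however, that the paper offers no proof of this proposition at all: it is quoted verbatim from Henry's monograph \cite{Henry}, where it is established for general sectorial operators via the analytic functional calculus (contour-integral representations of $(-A)^{\beta}S(t)$ and of the fractional powers $(-A)^{-\gamma}$). Your route is genuinely different and more elementary: you exploit the spectral theorem available under Assumption~\ref{assumption1} (self-adjointness, orthonormal eigenbasis, spectral gap $\inf_i\lambda_i>0$) to reduce every operator bound to a scalar optimization over the spectrum, and all four scalar estimates you give check out — $\sup_{\lambda>0}\lambda^{\beta}e^{-\lambda t}=(\beta/e)^{\beta}t^{-\beta}$, the two-regime bound $\lambda^{-\gamma}(1-e^{-\lambda t})\le t^{\gamma}$ split at $\lambda t=1$, the termwise commutation, and the use of the lower bound $\lambda_i\ge\lambda_0$ for the domain inclusion (this last point is where your argument genuinely needs the positive-definiteness; for merely sectorial $A$ with $0$ in the spectrum the inclusion of domains can fail, which is why Henry states it with the spectral gap too). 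What the spectral approach buys is a short, self-contained proof with explicit constants; what it costs is generality — it does not extend to the non-self-adjoint setting, whereas Henry's semigroup-theoretic argument does. Since the paper only ever invokes the proposition under Assumption~\ref{assumption1}, your argument fully suffices for its role here.
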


We introduce two spaces $\HH$ and $V$ where $\HH\subset V $ that depend
on the choice of the boundary conditions.
For Dirichlet boundary conditions we let 
\begin{eqnarray*}
V= \HH= H_{0}^{1}(\Omega)=\{v\in H^{1}(\Omega): v=0\;\;
\text{on}\;\;\partial \Omega\}, 
\end{eqnarray*}
and for Robin boundary conditions, for which Neumann boundary conditions
are a particular case, $V=  H^{1}(\Omega)$ and
\begin{eqnarray*}
\HH = \left\lbrace v\in H^{1}(\Omega): \partial v/\partial
  \nu_{A}+\sigma v=0\quad \text{on}\quad \partial \Omega\right\rbrace, 
 \end{eqnarray*}
see \cite{lions} for details. Functions in $\HH$ satisfy the boundary
conditions and with $\HH$ in hand we can characterize the
domain of the operator $(-A)^{r/2}$ and have the following norm
equivalence \cite{Stig,ElliottLarsson} for $r=1,2$
\begin{eqnarray*}
\Vert v \Vert_{H^{r}(\Omega)} \equiv \Vert (-A)^{r/2} v
\Vert_{L^{2}(\Omega)}:=\Vert  v \Vert_{r}, \qquad 
\forall v\in \mathcal{D}((-A)^{r/2})=  \HH\cap H^{r}(\Omega).
\end{eqnarray*} 
We now introduce our assumptions on the spatial domain and finite
element space $V_h$. We consider the space of continuous functions that are
piecewise linear over the triangulation $\mathcal{T}_{h}$ with $V_h
\subset V$.

\begin{assumption}
\label{assumption2}
 \textbf{(Regularity of the domain $\Omega$ and the space grid)}

 We assume that $\Omega$  has a smooth boundary or is a convex polygon
 and  that the maximal length  $h$ of the elements of
 $\mathcal{T}_{h}$ satisfy the usual  regularity assumptions on the
 triangulation i.e. for $r=1,2$
\begin{eqnarray}
\label{regulard}
  \underset{\chi \in V_{h}}{\inf}\left(\Vert v-\chi \Vert +h \Vert
    \nabla(v-\chi)\Vert\right) \leq  C h^{r} \Vert
  v\Vert_{H^{r}(\Omega)}, \qquad v\in V\cap
  H^{r}(\Omega). 
\end{eqnarray}
\end{assumption}
This inequality is sometimes called the Bramble and
Hilbert inequality, see \cite{lions} or \cite{Vidar}. It follows 
that 
\begin{equation}
 \Vert P_{h}v -v \Vert  \leq C h^{r}\Vert v \Vert_{H^{r}(\Omega)}
 \qquad  \forall v\in V\cap H^{r}(\Omega),\quad 
 r=1,2.
\label{eq:feError}
\end{equation}

\begin{assumption}
 \label{assumption4}
\textbf{(Nonlinearity)}

Let $\mathcal{V}$ be a separable Banach space such that
$\mathcal{D}((-A)^{1/2}) \subset \mathcal{V} \subset H=L^{2}(\Omega) $
continuously. 
We assume that there exist a positive constant $L> 0$  such  that
the Nemytskii  operator $F$ satisfies one of the following


(a) $F: \mathcal{V} \rightarrow \mathcal{V} $ is twice continuously Fr\'{e}chet  differentiable mapping with
$$
\Vert  F'(v)w \Vert  \leq L \Vert w\Vert,
\qquad \Vert F'(v)\Vert_{L(\mathcal{V})}\leq L,
\qquad \Vert F''(v)\Vert_{L^{(2)}(\mathcal{V})}\leq L$$
and 
$$
\Vert (F'(u))^{*}\Vert_{L(\mathcal{D}((-A)^{1/2}))}  \leq L(1+ \Vert
u\Vert_{\mathcal{D}((-A)^{1/2})}) \qquad \forall \,v,w \in 
\mathcal{V},\quad u \in \mathcal{D}((-A)^{1/2}),
$$ 
where $(F'(u))^{*}$ is the adjoint of $F'(u)$ defined by
\begin{eqnarray*}
 ((F'(u))^{*}v,w)=(v,F'(u)w)\;\;\ \forall \,v,w \in H=L^{2}(\Omega).
\end{eqnarray*}
As a consequence 
\begin{eqnarray*}
 \Vert F(X)- F(Y)\Vert \leq L \Vert X- Y\Vert \qquad \forall X,
 Y  \in H, 
\end{eqnarray*}
and $\forall X \in H=L^{2}(\Omega)$
\begin{eqnarray*}
 \Vert F(X) \Vert \leq   \Vert F (0)\Vert + \Vert F (X) - F (0)\Vert
 \leq \Vert F (0)\Vert + L \Vert X\Vert \leq C( \Vert F (0)\Vert
 +\Vert X \Vert ).
\end{eqnarray*}
(b) $ F$ is globally Lipschitz continuous from 
$(H^{1}(\Omega),\Vert. \Vert_{H^{1}(\Omega)})$ to $(H=L^{2}(\Omega),
\Vert. \Vert)$  then 
\begin{eqnarray*}
  \Vert F(X)- F(Y)\Vert \leq L \Vert X- Y\Vert_{H^{1}(\Omega)} \qquad
  \forall X, Y  \in H^{1}(\Omega). 
\end{eqnarray*}
\end{assumption}
We assume that the function  $F$  is  defined in $L^{2}(\Omega)$,
although in general $F$ may be defined in any Hilbert space.
The possible choice of $\mathcal{V}$ can be $H$, $H^{1}(\Omega)$  or
the $\mathbb{R}-$ Banach space of continuous functions from $[0,T]$ to
$H$ denoted by $C([0,T],H)$ if $d=1$. 

We now turn our attention to the noise term and introduce spaces and
notation that we need to define the $Q$-Wiener process.
Denoting by $\mathcal{L}(H)$ the Banach algebra of bounded linear
operators on $H$ with the usual norm. We recall that an operator $T
\in \mathcal{L}(H)$ is Hilbert-Schmidt if
\begin{eqnarray*}
 \Vert T\Vert_{HS}^{2}:=\underset{i\in \mathbb{N}^{d}}{\sum}\Vert T e_{i}
 \Vert ^2 < \infty. 
\end{eqnarray*}
If we denote the space of Hilbert-Schmidt operator from 
$Q^{1/2}(H)$ to $H$ by $L_{2}^{0}:=  HS(Q^{1/2}(H),H)$ i.e   
\begin{eqnarray*}
  L_{2}^{0}= \left\lbrace \varphi \in \mathcal{L}(H) :\underset{i \in
      \mathbb{N}^{d}}{\sum}\Vert \varphi \,Q^{1/2} e_{i} \Vert^{2}< \infty
  \right\rbrace, 
\end{eqnarray*}
the corresponding norm $\Vert . \Vert_{L_{2}^{0}}$ by
\begin{eqnarray*}
  \Vert \varphi \,\Vert_{L_{2}^{0}} := \Vert \varphi
  Q^{1/2}\Vert_{HS}=\left( \underset{i \in \mathbb{N}^{d}}{\sum}\Vert
    \varphi Q^{1/2} e_{i} \Vert^{2}\right)^{1/2}.  
\end{eqnarray*}
Let $\varphi(\omega)$ be a process such that for every sample
$\omega$, $\varphi(\omega) \in L_{2}^{0}$. Then we have the following
equality 
\begin{eqnarray*}
 \textbf{E} \Vert \int_{0}^{t}\varphi dW \Vert^{2}=\int_{0}^{t}
 \textbf{E}\Vert \varphi \Vert_{L_{2}^{0}}^{2}ds=\int_{0}^{t}
 \textbf{E}\Vert \varphi Q^{1/2} \Vert_{HS}ds,
\end{eqnarray*}
using Ito's isometry \cite{DaPZ}.
We assume sufficient regularity of the noise for the existence of a
mild solution and to project the noise into the finite element space
$V_h$. To be specific we assume the noise is in either in $H^1$ or
$H^2$ in space. 

\begin{assumption}
\label{assumption3}
\textbf{(Regularity of the noise) }
For all $t \in [ 0,T]$ we assume  that $O(t)$ is an adapted stochastic
process to the filtration $(\mathcal{F}_{t})_{t\geq 0 }$ with  
continuous sample paths such that $O(t_{2})-S(t_{2}-t_{1})O(t_{1}),
\;0\leq t_{1}<t_{2} \leq T$ is independent of $\mathcal{F}_{t_{1}}$  
Let $\mathcal{V}$ be a separable Banach space such that
$\mathcal{D}((-A)^{1/2}) \subset \mathcal{V} \subset H=L^{2}(\Omega) $
continuously. For some $\theta\in(0,1/2]$  
\begin{eqnarray*}
 O(t) \in \mathcal{D}((-A)^{r/2})&= &\HH \cap H^{r}(\Omega),\qquad  r=1,2\\
\textbf{E}\left(\Vert O(t_{2})-O(t_{1})\Vert_{\mathcal{V}}^{4}\right) &\leq&
C(t_{2}-t_{1})^{4 \theta}.
\end{eqnarray*}
\end{assumption}

Using the equivalence of norms, we have that
\begin{eqnarray*}
  O(t) \in \mathcal{D}((-A)^{r/2}) \quad \forall t \in  [ 0,T]
  \Leftrightarrow \Vert(- A)^{r/2} Q^{1/2}\Vert_{HS} <\infty \quad  r=1,2.
\end{eqnarray*}

\subsection{Main results}
Throughout the paper we let $N$ be the number of terms of truncated
noise and $\mathcal{I}_{N}=\left\lbrace 1,2,...,N \right\rbrace^{d}$ and
take $t_m=m\Dt \in (0,T]$, where $T=M\Dt$ for $m,M\in\N$. We take $C$
to be a constant that may depend on $T$ and other parameters but not
on $\Dt$, $N$ or $h$. We also assume that when initial data $X_{0}  \in
\mathcal{D}((-A)^{\gamma})$  then $  \mathbf{E}\Vert
(-A)^{\gamma}X_{0}\Vert^{l} < \infty$, $l=2,4$ and $0\leq \gamma <1$.

Our first result is a strong convergence result in $L^2$ when the 
non-linearity satisfies the Lipschitz condition of
\assref{assumption4} (a) with scheme (\textbf{SETD1}). 
This is, for example, the case of reaction--diffusion SPDEs. 
\begin{theorem}
\label{th1}
Suppose that Assumptions \ref{assumption1}, \ref{assumption2}, 
\ref{assumption4}(a) and \ref{assumption3} are satisfied with $r=1,2$.  
Let  $X(t_m)$  be the mild  solution of equation \eqref{adr} represented
by (\ref{eq1}) and $X_m^{h}$ be the numerical
approximation through \eqref{new} (\textbf{SETD1}). Let  $ 0< \gamma <
1$ and  set $\sigma = \min(2\theta,\gamma)$ and let $\theta \in
(0,1/2]$ be defined as in \assref{assumption3}.  

If  $X_{0} \in \mathcal{D}((-A)^{\gamma})$ then
\begin{eqnarray*}
 \left(\textbf{E}\Vert X(t_m)-X_{m}^{h}\Vert^{2}\right)^{1/2}& \leq
 &C \left( t_{m}^{-1/2} h^{r}+\Delta t^{\sigma} +\left(\underset { j
       \in \mathbb{N}^{d} \backslash 
       \mathcal{I}_{N}} {\inf}  \lambda_{j}\right)^{-r/2}\right).
\end{eqnarray*}
If $X_{0} \in \mathcal{D}(-A) = \HH \cap H^{2}(\Omega)$ then 
\begin{eqnarray*}
 \left(\textbf{E}\Vert X(t_m)-X_{m}^{h}\Vert^{2}\right)^{1/2}
 \leq C \left(h^{r}+\Delta t^{2 \theta }+\left(\underset { j \in
       \mathbb{N}^{d} \backslash 
       \mathcal{I}_{N}} {\inf}  \lambda_{j}\right)^{-r/2}\right).
\end{eqnarray*}
\end{theorem}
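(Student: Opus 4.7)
The plan is to split the total error as
\[
X(t_m) - X_m^h = \bigl(X(t_m) - X^h(t_m)\bigr) + \bigl(X^h(t_m) - X_m^h\bigr),
\]
where $X^h$ denotes the mild solution of the semi-discrete problem \eqref{dadr}. The first difference carries the spatial and noise-truncation information, the second captures the temporal exponential-integrator error, and a discrete Gronwall argument on the nonlinear feedback closes the whole estimate.

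For $X(t_m)-X^h(t_m)$ I would expand both mild solutions and treat three summands separately. The initial-data term $[S(t_m)-S_h(t_m)P_h]X_0$ is bounded by the standard FE-semigroup estimate, which for $X_0\in\mathcal{D}((-A)^{\gamma})$ has the form $\le C t_m^{-(r-\gamma)/2}h^r\Vert(-A)^{\gamma/2}X_0\Vert$ and is the source of the $t_m^{-1/2}h^r$ factor when $\gamma<1$, while the singularity disappears for $X_0\in\mathcal{D}(-A)$. The deterministic convolution splits into $[S(t_m-s)-S_h(t_m-s)P_h]F(X(s))$ (treated by the same FE bound and an a priori bound on $X$ from \assref{assumption4}(a)) and $S_h(t_m-s)P_h[F(X(s))-F(X^h(s))]$ (treated by the Lipschitz bound and absorbed by the final Gronwall step). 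The stochastic convolution I would break as
\[
O(t_m)-O^h(t_m)=\bigl[O(t_m)-O^N(t_m)\bigr]+\bigl[O^N(t_m)-O^h(t_m)\bigr],
\]
where $O^N(t_m):=\int_0^{t_m}S(t_m-s)dW^N(s)$ and $O^h(t_m):=\int_0^{t_m}S_h(t_m-s)P_h\,dW^N(s)$; Ito's isometry together with \assref{assumption3} and the norm equivalence $\Vert(-A)^{r/2}Q^{1/2}\Vert_{HS}<\infty$ yields the truncation term $(\underset{j\notin\mathcal{I}_N}{\inf}\lambda_j)^{-r/2}$ for the first piece and the $h^r$ bound (via \eqref{eq:feError}) for the second.

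For the time-stepping error, iterating the one-step recursion gives $X^h(t_{m+1})-X_{m+1}^h=e^{\Delta t A_h}(X^h(t_m)-X_m^h)+J_m^{\mathrm{det}}+J_m^{\mathrm{sto}}$ with local deterministic error $J_m^{\mathrm{det}}=\int_0^{\Delta t}S_h(\Delta t-s)P_h[F(X^h(s+t_m))-F(X_m^h)]ds$ and local stochastic error
\[
J_m^{\mathrm{sto}}=\int_{t_m}^{t_{m+1}}\bigl[S_h(t_{m+1}-s)P_h-P_h\,e^{(t_{m+1}-s)A_N}\bigr]dW^N(s).
\]
For $J_m^{\mathrm{det}}$, a bare Lipschitz bound would deliver only $\Delta t^{\theta}$, so to reach the sharp exponent $\sigma=\min(2\theta,\gamma)$ I would substitute the mild form of $X^h(t_m+s)-X^h(t_m)$ into a second-order Taylor expansion of $F$ about $X^h(t_m)$, exploiting the bounds on $F'$, $F''$ and $(F'(u))^*$ from \assref{assumption4}(a) together with the discrete smoothing estimate $\Vert(-A_h)^\beta S_h(t)\Vert\le C t^{-\beta}$; this is the standard mechanism by which a linear functional of the noise supplies the extra half-order. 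For $J_m^{\mathrm{sto}}$, since $P_N$ commutes with $S(t)$ and $P_h e^{tA_N}P_N=P_h S(t)P_N$, the integrand collapses to $[S_h(t_{m+1}-s)P_h-P_h S(t_{m+1}-s)]P_N$, and Ito isometry together with the FE-semigroup stochastic error yields an $h^r$ bound per step that is absorbed into the global $h^r$ term.

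The most delicate point, I expect, is this sharp $\Delta t^{2\theta}$ deterministic local bound: one must control cross-terms in which $F'(X_m^h)$ acts on a stochastic convolution of the form $\int_{t_m}^{t_m+s}S_h(t_m+s-\tau)P_h dW^N(\tau)$, use the adjoint estimate on $(F'(X_m^h))^*$ to move the FE smoothing operator off the stochastic integrand, and carefully track the $h$-independence of these bounds. The hypothesis $\Vert(-A)^{r/2}Q^{1/2}\Vert_{HS}<\infty$ is the ingredient that converts all the above into concrete powers of $h$ and $\lambda_j^{-r/2}$, and the exponent $\sigma=\min(2\theta,\gamma)$ emerges as the slower of the deterministic Holder rate $\gamma$ of $S(t)X_0$ and the effective stochastic rate $2\theta$ produced by the Taylor-based analysis.
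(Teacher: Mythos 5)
Your outline is coherent and it correctly identifies the two key mechanisms behind the rate $\sigma=\min(2\theta,\gamma)$: the Taylor expansion of $F$ with the independence of $O(s)-S(s-t_k)O(t_k)$ from $\mathcal{F}_{t_k}$ supplying the extra half order, and the adjoint bound on $(F'(u))^{*}$ handling the term $(S(s-t_k)-\mathrm{I})O(t_k)$. However, your decomposition is genuinely different from the paper's, and the difference matters. You insert the semi-discrete mild solution $X^{h}(t_m)$ of \eqref{dadr} as an intermediate object and split the error into ``space'' plus ``time.'' The paper never does this: it writes $X(t_m)=\overline{X}(t_m)+O(t_m)$ and observes that, because the scheme updates the stochastic convolution exactly in each Fourier mode, the noise part of $X_m^h$ telescopes to $P_hP_N O(t_m)$ \emph{exactly}. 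The error then splits as in \eqref{eq:IIIIII} into the deterministic discrepancy $\overline{X}(t_m)-Z_m^h$ plus the two projection errors $(\mathrm{I}-P_h)P_NO(t_m)$ and $(\mathrm{I}-P_N)O(t_m)$, which give $h^{r}$ via \eqref{eq:feError} and $(\inf\lambda_j)^{-r/2}$ directly. There is no temporal stochastic consistency error at all, so your terms $J_m^{\mathrm{sto}}$ and $O^{N}-O^{h}$ simply do not arise; all the delicate Taylor and H\"older arguments are applied to the exact objects $X$ and $O$, for which \assref{assumption3} and \lemref{lemme2} apply verbatim.

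The price of your route is concentrated in one step that you acknowledge only in passing (``carefully track the $h$-independence''), and it is more than bookkeeping: your Taylor argument for $J_m^{\mathrm{det}}$ is performed on the increment $X^{h}(t_m+s)-X^{h}(t_m)$, so you need uniform-in-$h$ analogues of \lemref{lemme2} and, crucially, of the fourth-moment H\"older bound $\textbf{E}\Vert O^{h}(t_2)-O^{h}(t_1)\Vert_{\mathcal{V}}^{4}\leq C(t_2-t_1)^{4\theta}$ for the semi-discrete convolution $O^{h}=\int_0^{\cdot}S_h(\cdot-s)P_h\,dW^{N}(s)$. Assumption \ref{assumption3} gives this only for the continuous $O$, and transferring it to $O^{h}$ in the $\mathcal{V}$-norm (which may be $C([0,T],H)$ or $H^{1}(\Omega)$) uniformly in $h$ is a nontrivial stability question for $S_hP_h$ that you would have to prove separately. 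The paper's decomposition is designed precisely to avoid this. If you either supply those uniform discrete regularity bounds or, more simply, abandon the intermediate $X^{h}(t_m)$ and compare $X_m^h$ directly with $X(t_m)$ as in \eqref{eq:IIIIII}, your argument closes; as written, that transfer is the genuine gap.
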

Our first result  for scheme (\textbf{SETD0}) is a strong convergence
result in $L^2$ when the non-linearity satisfies the Lipschitz
condition of \assref{assumption4} (a).  

\begin{theorem}
\label{th3}
Suppose that Assumptions \ref{assumption1}, \ref{assumption2}, 
\ref{assumption4}(a) and \ref{assumption3} are satisfied with $r=1,2$.  
Let  $X(t_m)$  be the mild  solution of equation \eqref{adr} represented
by (\ref{eq1}) and $Y_m^{h}$ be the numerical
approximation through \eqref{new0}  (\textbf{SETD0}). Let  $ 0< \gamma
< 1$ and  set $\sigma = \min(2\theta,\gamma)$ and let $\theta \in
(0,1/2]$ be defined as in \assref{assumption3}.  

If  $X_{0} \in \mathcal{D}((-A)^{\gamma})$ then
\begin{eqnarray*}
 \left(\textbf{E}\Vert X(t_m)-Y_{m}^{h}\Vert^{2}\right)^{1/2}& \leq
 &C \left( t_{m}^{-1/2} h^{r}+\Delta t^{\sigma} +\Delta t \left|\ln (\Delta t)
   \right|+ \left(\underset { j \in \mathbb{N}^{d} \backslash
       \mathcal{I}_{N}} {\inf}  \lambda_{j}\right)^{-r/2}\right).
\end{eqnarray*}
If $X_{0} \in \mathcal{D}(-A) = \HH \cap H^{2}(\Omega)$ then 
\begin{eqnarray*}
 \left(\textbf{E}\Vert X(t_m)-Y_{m}^{h}\Vert^{2}\right)^{1/2}
 \leq C \left(h^{r}+\Delta t^{2 \theta }+\Delta t \left|\ln (\Delta t)
   \right|+\left(\underset { j \in \mathbb{N}^{d} \backslash
       \mathcal{I}_{N}} {\inf}  \lambda_{j}\right)^{-r/2}\right).
\end{eqnarray*}
\end{theorem}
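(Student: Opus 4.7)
The plan is to parallel the proof of \thmref{th1} and reduce the analysis of \SETDLR to that of \SETD through a triangle inequality, isolating the single place where the two schemes differ: the replacement of the exact convolution $\int_{t_j}^{t_{j+1}} S_h(t_{j+1}-s)\,ds = \Dt\,\varphi_1(\Dt A_h)$ by the endpoint quadrature $\Dt\, e^{\Dt A_h}$. Writing $X_m^h$ for the \SETD iterate \eqref{new} and $Y_m^h$ for the \SETDLR iterate \eqref{new0} started from the same initial data and driven by the same noise path, I would use
$$
\bigl(\mathbf{E}\|X(t_m)-Y_m^h\|^2\bigr)^{1/2} \le \bigl(\mathbf{E}\|X(t_m)-X_m^h\|^2\bigr)^{1/2} + \bigl(\mathbf{E}\|X_m^h-Y_m^h\|^2\bigr)^{1/2},
$$
so that \thmref{th1} absorbs the spatial, spectral, and stochastic contributions, leaving only the deterministic quadrature discrepancy to estimate; I expect this to contribute exactly the extra $\Dt\,|\ln(\Dt)|$ term.

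Subtracting \eqref{new0} from \eqref{new} the stochastic increment $P_h\int_{t_m}^{t_{m+1}} e^{(t_{m+1}-s)A_N}\,dW^N(s)$ and the $e^{\Dt A_h}$ linear propagation cancel, giving
$$
X_{m+1}^h - Y_{m+1}^h = e^{\Dt A_h}\bigl(X_m^h - Y_m^h\bigr) + \Dt\,\varphi_1(\Dt A_h) P_h\bigl[F(X_m^h)-F(Y_m^h)\bigr] + R_m,
$$
with the quadrature residual
$$
R_m = \Dt\bigl[\varphi_1(\Dt A_h) - e^{\Dt A_h}\bigr] P_h F(Y_m^h) = \int_0^{\Dt} e^{(\Dt-s)A_h}\bigl[I - e^{sA_h}\bigr] P_h F(Y_m^h)\,ds.
$$
Iterating from $X_0^h - Y_0^h = 0$ splits the cumulative error into a Lipschitz part to be closed by discrete Gr\"onwall and a quadrature sum $\sum_{j=0}^{m-1} e^{(m-j-1)\Dt A_h} R_j$, which is the genuinely new contribution compared to \thmref{th1}.

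The key step is the estimate of this quadrature sum. Bringing $e^{(m-j-1)\Dt A_h}$ inside the integral and applying the discrete analogue of \propref{prop1} for $A_h$ (which inherits the spectral structure of $A$ through the Galerkin projection), the standard smoothing factorisation
$$
\bigl\| e^{((m-j)\Dt - s)A_h}\bigl[I - e^{sA_h}\bigr] \bigr\| \le \bigl\|(-A_h) e^{((m-j)\Dt - s)A_h}\bigr\|\,\bigl\|(-A_h)^{-1}(I - e^{sA_h})\bigr\| \le \frac{C\,s}{(m-j)\Dt - s}
$$
is valid for $j \le m-2$, where the denominator is bounded below by $(m-j-1)\Dt$; integrating in $s \in [0,\Dt]$ yields $C\Dt/(m-j-1)$ and summing in $j$ produces the harmonic sum $C\Dt\sum_{k=1}^{m-1} k^{-1} \le C\Dt\,|\ln(\Dt)|$. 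The singular endpoint $j=m-1$, where this bound is not integrable, is handled separately by the trivial $\|e^{(\Dt-s)A_h}(I - e^{sA_h})\| \le 2$ and contributes an additional $C\Dt$. Coupled with a uniform second-moment bound $\sup_{0\le j\le M}\mathbf{E}\|F(Y_j^h)\|^2 < \infty$ (obtained by induction on $j$ from \assref{assumption4}(a), the contractivity of $e^{\Dt A_h}$, and Ito's isometry on the stochastic increment), Minkowski's inequality in $L^2(\Omega)$ promotes this to $\bigl(\mathbf{E}\bigl\|\sum_j e^{(m-j-1)\Dt A_h}R_j\bigr\|^2\bigr)^{1/2} \le C\Dt\,|\ln(\Dt)|$.

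A discrete Gr\"onwall inequality applied to the Lipschitz remainder then yields $(\mathbf{E}\|X_m^h - Y_m^h\|^2)^{1/2} \le C\Dt\,|\ln(\Dt)|$, and combining with \thmref{th1} gives both claims. The main obstacle will be the boundary layer at $j=m-1$ of $R_j$: the sharp smoothing bound used in the bulk of the sum is not integrable there, so one must fall back on the elementary $L(H)$ bound and verify that the contribution assembles into the logarithmic factor without producing any further loss. Establishing the uniform second-moment bound on $Y_j^h$ is routine but essential, since $F(Y_j^h)$ appears under a pathwise convolution and must be controlled in mean square.
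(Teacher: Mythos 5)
Your proposal is correct, but it takes a genuinely different route from the paper. The paper proves \thmref{th3} by repeating the direct comparison of the mild solution with the scheme: it writes $Y_m^h = Z_m^h + P_hP_N O(t_m)$ with $Z_m^h$ now involving $S_h(t_m-t_k)$ in place of $S_h(t_m-s)$ inside the Riemann sums, reuses the $I+II+III$ splitting from the proof of \thmref{th1}, and defers the one new term --- the quadrature error $\sum_k\int_{t_k}^{t_{k+1}}\bigl(S_h(t_m-s)-S_h(t_m-t_k)\bigr)P_hF(\cdot)\,ds$ --- to the estimates of \cite{GTambue}, where the same smoothing factorisation and harmonic sum produce the $\Dt\left|\ln(\Dt)\right|$. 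You instead compare the two schemes pathwise and invoke \thmref{th1} through the triangle inequality, so that the only object left to estimate is the difference of the two quadrature rules applied to $F$; the cancellation of the identical noise increments and the combination of the linear propagation into $e^{\Dt A_h}(X_m^h-Y_m^h)$ are exactly right, and your bound $\|e^{((m-j)\Dt-s)A_h}(I-e^{sA_h})\|\le Cs/((m-j-1)\Dt)$ for $j\le m-2$, together with the crude $O(1)$ bound on the last step $j=m-1$, reproduces the paper's $\Dt\sum_k k^{-1}\le C\Dt\left|\ln(\Dt)\right|$. Your route is more modular --- it isolates the \SETDLR-versus-\SETD discrepancy as a purely deterministic quadrature residual and needs no re-examination of the terms $I_1,\dots,I_4$ --- at the cost of requiring the uniform second-moment bound $\sup_j\mathbf{E}\|Y_j^h\|^2<\infty$, which, as you note, follows by induction from the linear growth of $F$ in \assref{assumption4}(a), the contractivity of $e^{\Dt A_h}$, and the Ito isometry; the paper's route avoids that auxiliary bound because its Lipschitz term is closed against $\textbf{E}\Vert X(t_k)-Y_k^h\Vert^2$ directly by Gr\"onwall. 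Both arguments ultimately rest on the same two ingredients, namely the $h$-uniform smoothing properties of $S_h$ and the harmonic sum, so your estimate matches the stated rate in both regularity regimes for $X_0$.
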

For convergence in the mean square  $H^{1}(\Omega)$ 
norm where the non-linearity satisfies the Lipschitz condition
from  $L^{2}(\Omega)$ norm to $H^{1}(\Omega)$ ( \assref{assumption4} (b)) we can state results for (\textbf{SETD1}) and
(\textbf{SETD0}) together.
\begin{theorem}
\label{th2}
Suppose that Assumptions
\ref{assumption1}, \ref{assumption2}, \ref{assumption4}(b), \ref{assumption3}
(with $r=2 $) are satisfied and  $F(X)\in V$ with  $\mathbf{E}\left(\underset{0\leq s\leq
      T}{\sup} \Vert
    F(X(s))\Vert_{H^{1}(\Omega)}\right)^{2} < \infty $. 
Let $X$ be the solution mild of equation (\ref{adr}) represented by equation
 (\ref{eq1}) and $\zeta_m^{h}$ be the numerical
approximations through scheme \eqref{new}  or \eqref{new0} (
$\zeta_m^{h}=X_m^{h} $  
for scheme (\textbf{SETD1}) and $\zeta_m^{h}=Y_m^{h} $ for scheme
(\textbf{SETD0})). 
 Let  $ 0\leq \gamma < 1$.
Then we have the following:

If $ X_{0}\in \mathcal{D}((-A)^{(1+\gamma)/2})$ then 
\begin{eqnarray*}
(\textbf{E} \Vert X(t_m)-\zeta_{m}^{h}\Vert_{H^{1}(\Omega)}^{2})^{1/2}&\leq& C
\left((t_{m}^{-1/2}\,h +\Delta t^{\gamma/2} )  + \left(
    \underset { j \in \mathbb{N}^{d} \backslash \mathcal{I}_{N}}
    {\inf} \lambda_{j}\right)^{-1/2}\right). 
\end{eqnarray*}
If $ X_{0}\in  \mathcal{D}((-A))$  then
\begin{eqnarray*}
(\textbf{E} \Vert
X(t_m)-\zeta_{m}^{h}\Vert_{H^{1}(\Omega)}^{2})^{1/2}&\leq& C \left ((h 
  +\Delta t^{1/2-\epsilon} )  + \left( \underset { j \in
      \mathbb{N}^{d} \backslash \mathcal{I}_{N}} {\inf}
    \lambda_{j}\right)^{-1/2}\right). 
\end{eqnarray*}
for very small $\epsilon \in (0,1/2)$.
\end{theorem}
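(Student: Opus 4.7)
The plan is to measure everything in the $H^{1}(\Omega)$ norm via the equivalence $\Vert \cdot \Vert_{H^{1}(\Omega)} \equiv \Vert (-A)^{1/2}\cdot \Vert$ and split the global error $X(t_m)-\zeta_m^{h}$ by the triangle inequality into three contributions: a spatial finite element error $X(t_m)-X^{h}(t_m)$, a temporal discretisation error $X^{h}(t_m)-\zeta_m^{h}$, and a noise truncation error coming from replacing $W$ by $W^{N}=P_{N}W$. These are bounded separately in mean square $H^{1}(\Omega)$ norm and then combined via a (weakly singular) discrete Gronwall argument. The advantage of this splitting is that the same decomposition works for \SETD and \SETDLR, the two schemes differing only in a single intermediate quadrature operator.

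For the spatial error I would iterate the mild representation (\ref{Xhint}) and subtract (\ref{eq1}), obtaining an initial part $(S(t_m)-S_h(t_m)P_h)X_0$, a drift part with integrals of $S(t_m-s)F(X(s))-S_h(t_m-s)P_hF(X^h(s))$, and a stochastic part involving $(S-S_h P_h P_N)(t_m-s)\,dW$. These are handled by standard parabolic FEM bounds in $H^{1}$ norm (analogous to those used for \thmref{th1} in \cite{GTambue}): namely $\Vert (S(t)-S_h(t)P_h)v\Vert_{H^{1}}\leq C h\, t^{-(1+\gamma)/2}\Vert v\Vert_{1+\gamma}$, combined with the assumed $H^{1}$ moments of $F(X)$ for the drift term, and Ito's isometry together with $\Vert (-A)Q^{1/2}\Vert_{HS}<\infty$ (which follows from \assref{assumption3} with $r=2$) for the stochastic convolution. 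The spectral truncation contribution is reduced to $\Vert (I-P_N)Q^{1/2}\Vert_{HS}^{2}\leq C(\inf_{j\notin \mathcal{I}_{N}}\lambda_{j})^{-1}$ in the usual way, producing the $(\inf \lambda_j)^{-1/2}$ factor.

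For the temporal error I iterate (\ref{new}) or (\ref{new0}) to get a discrete variation-of-constants formula for $\zeta_m^h$ mimicking the mild form of $X^h$, and compare term by term. The schemes differ only by replacement of $\Dt\varphi_{1}(\Dt A_h)$ by $\Dt\,e^{\Dt A_h}$, which contributes a quadrature consistency error $\int_{t_k}^{t_{k+1}}(S_h(t_m-s)-S_h(t_m-t_k))P_hF(X^{h}(s))\,ds$ for \SETDLR and a sharper one for \SETD. To measure such a term in $H^{1}$ I insert $(-A_h)^{1/2}$ and use the smoothing bounds $\Vert (-A_h)^{1/2}S_h(t)\Vert\leq C t^{-1/2}$ and $\Vert (-A_h)^{-\beta}(I-S_h(\tau))\Vert\leq C\tau^{\beta}$ (the discrete analogues of \propref{prop1} applied to $A_h$), together with the Lipschitz hypothesis \assref{assumption4}(b) which converts the remaining $\Vert F(X^h(s))-F(\zeta_k^h)\Vert$ into $\Vert X^h(s)-\zeta_k^h\Vert_{H^{1}}$. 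An analogous treatment of the noise difference, using Ito's isometry and the $H^{2}$ regularity of $dW$, provides the stochastic part of the temporal error.

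The main obstacle is the half-derivative mismatch between \assref{assumption4}(b), which gives Lipschitz continuity only from $H^{1}(\Omega)$ to $L^{2}(\Omega)$, and the fact that the target norm is $H^{1}$: the missing half derivative must be recovered from the smoothing of $S_h$, which produces a weakly singular factor $(t_m-s)^{-1/2}$ in every drift sum. Combined with the singular initial-data estimate $C t_m^{-1/2} h$ and the approximation order $\Dt^{\gamma/2}$, this requires a weakly singular discrete Gronwall inequality (of the type used in \cite{GTambue,LT}) to close the recursion without losing the rate. The endpoint case $\gamma=1$, i.e. $X_{0}\in\mathcal{D}(-A)$, saturates this argument and formally produces a logarithmic factor $|\ln \Dt|$ analogous to the one in \thmref{th3}; here it is absorbed into the arbitrarily small $\epsilon>0$, yielding the rate $\Dt^{1/2-\epsilon}$ asserted in the second half of the theorem.
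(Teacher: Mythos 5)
Your overall strategy is sound and would deliver the stated rates, but it is not the route the paper takes. You insert the exact semidiscrete solution $X^{h}(t)$ of \eqref{dadr} as an intermediate object and split the error into space, time and truncation parts; the paper instead compares $X(t_m)$ with $\zeta_m^h$ directly, writing $X(t_m)-\zeta_m^h=(\overline{X}(t_m)-Z_m^h)+(P_NO(t_m)-P_hP_NO(t_m))+(O(t_m)-P_NO(t_m))$ with $\overline{X}=X-O$ and $Z_m^h=\zeta_m^h-P_hP_NO(t_m)$. The point of that decomposition is that the schemes' noise increments telescope to $P_hP_NO(t_m)$, so the entire stochastic contribution is disposed of by the projection estimates for $P_h$ and $P_N$ applied to $O(t_m)\in\mathcal{D}(-A)$ (giving $Ch$ and $C(\inf\lambda_j)^{-1/2}$), and only the deterministic part $\overline{X}(t_m)-Z_m^h$ is split further (into $I_1,\dots,I_4$: initial data via \eqref{form4}--\eqref{form5} of \lemref{lemme1}, the Lipschitz/Gronwall term, the time-regularity term via \lemref{lemme2}(ii), and the $T_h$-on-drift term). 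Your route buys a cleaner conceptual separation and reuses classical semidiscrete error analysis, but it costs two extra ingredients the paper never needs: (i) a mean-square $H^{1}$ temporal H\"older estimate for the \emph{semidiscrete} solution $X^h$, uniform in $h$ (the paper only needs \lemref{lemme2}(ii) for the continuous solution $X$); and (ii) a bound on the mismatch between $\int S_h(t_m-s)P_h\,dW^N$ and the scheme's $\sum_k S_h(t_m-t_{k+1})P_h\widehat{O}_k$, which reduces to estimating $(-A)^{1/2}\bigl(S_h(\tau)P_h-P_hS_N(\tau)\bigr)Q^{1/2}$ in Hilbert--Schmidt norm --- doable with \eqref{form5} and \eqref{eq:feError} under the $r=2$ noise assumption, but it must be stated and proved. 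Finally, one attribution is off: the $\epsilon$ in $\Delta t^{1/2-\epsilon}$ for $X_0\in\mathcal{D}(-A)$ is not a logarithm absorbed from the endpoint of the Gronwall sum; it originates in \lemref{lemme2}(ii) taken with $\gamma=1-\epsilon$, i.e.\ in the limited mean-square $H^{1}$ H\"older regularity in time of the stochastic convolution, and your argument must locate the loss there to obtain the correct exponent.
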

We note that this theorem covers the case of
advection-diffusion-reaction SPDEs, such as that arising in our
example from porous media.

We remark that if we denote by  $N_{h}$ the numbers of
vertices in the finite elements mesh then it is well known (see for 
example \cite{Yn:04}) that  if $N \geq N_{h}\ $ then 
$$
  \left( \underset { j \in \mathbb{N}^{d} \backslash \mathcal{I}_{N}}
    {\inf} \lambda_{j}\right )^{-1} \leq C h^{2}
\qquad \text{and} \qquad 
  \left( \underset { j \in \mathbb{N}^{d} \backslash \mathcal{I}_{N}}
    {\inf} \lambda_{j}\right )^{-1/2} \leq C h.
$$
As a consequence the estimates in Theorem \ref{th1}, Theorem
\ref{th3}  and Theorem \ref{th2} can be expressed in function of $h$
and $\Delta t $ only and it is the error from the finite element
approximation that dominates. If $N\leq N_h$ then it is the error from
the projection $P_N$ of the noise onto a finite number of modes that
dominates. 

From \thmref{th2} we also get an estimate in the root mean square
$L^2(\Omega)$  norm in the case that the nonlinear function $F$
satisfies \assref{assumption4} (b). We cannot do the proof directly in
$ L^2(\Omega)$ due to the Lipschitz condition in \assref{assumption4}
(b). 
Simulations  for  \thmref{th2} will be do in  $
L^2(\Omega)$ since the discrete $L^2(\Omega)$ norm is more  easy to use in all type of boundary conditions.

Finally if we compare these theorems to those in \cite{GTambue} for a
modified semi-implicit Euler-Maruyama method then we see that 
using the exponential based integrators we have weaker conditions on
the initial data and in particular the scheme \SETD has better convergence
properties.

\section{Proofs of main results}
\label{sec:proofs}

\subsection{Preparatory results}

We start by examining the deterministic linear problem.
Find $u \in V$ such that such that  
\begin{eqnarray}
\label{homog}
u'=Au \qquad  
\text{given} \quad u(0)=v.
\end{eqnarray}
The corresponding semi-discretization in space is : find $u_{h} \in
V_{h}$ such that  
$$u_{h}'=A_{h}u_{h}$$ 
where $u_{h}^{0}=P_{h}v$.
Define the operator 
\begin{eqnarray}
\label{form1}
T_{h}(t) :=  S(t)-S_{h}(t) P_{h} = e^{tA} - e^{tA_h}P_h
\end{eqnarray} 
so that $u(t)-u_{h}(t)= T_{h}(t) v$. 
\begin{lemma}
\label{lemme1}
The following estimates hold on the semi-discrete approximation of
\eqref{homog}
\begin{eqnarray}
\label{form2}
\Vert u(t)-u_{h}(t)\Vert &=&\Vert T_{h}(t) v\Vert \leq C t^{-1/2}\;
h^{2}\Vert v \Vert \;\;\;\;\;\text{if}\;\;\; v \in H=L^{2}(\Omega),\\ 
\label{form3}
\Vert u(t)-u_{h}(t)\Vert &=&\Vert T_{h}(t) v\Vert \leq C h^{2}\Vert v
\Vert_{H^{2}(\Omega)}\;\;\;\text{if}\;\;\; v \in \mathcal{D}((-A)),\\ 
\label{form4}
\Vert u(t)-u_{h}(t)\Vert_{H^{1}(\Omega)} &=&\Vert T_{h}(t) v\Vert_{H^{1}(\Omega)} \leq
C h t^{-1/2}\Vert v \Vert_{H^{1}(\Omega)}\;\;\;\text{if}\;\;\; v \in
V,\\ 
\label{form5}
\Vert u(t)-u_{h}(t)\Vert_{H^{1}(\Omega)} &=&\Vert T_{h}(t) v\Vert_{H^{1}(\Omega)} \leq
C h \Vert v \Vert_{H^{2}(\Omega)}\;\;\;\text{if}\;\;\; v \in
\mathcal{D}(-A). 
\end{eqnarray}
\begin{proof}
The proof of the estimates (\ref{form2}) and (\ref{form3}) can be
found in \cite{Vidar} with Dirichlet boundary conditions.  
The same proof can be generalized easily to Robin or mixed boundary
conditions, incorporating the extra term from the boundary with the
bilinear form.
Estimates (\ref{form2})- (\ref{form5}) are the special case of the proof
of Theorem 3.1 in \cite{Stig} where the nonlinearity is taken to be
zero.
For our case
\begin{eqnarray*}
 u(t)=S(t)v,
\end{eqnarray*}
and we have the  following estimates for  $t \in (0,T]$ 
\begin{eqnarray*}
  \Vert u(t)\Vert_{H^{s}(\Omega)} &\leq &  C  t^{-(s-1)/2}\Vert  v
  \Vert_{H^{1}(\Omega)}\;\;\;\;\;\text{if}\;\;\; v \in
  \mathbb{H}\;\;\;\;\; s= 1,2, \\ 
 \Vert u(t)\Vert_{H^{2}(\Omega)}  & \leq &  C  \Vert  v
 \Vert_{H^{2}(\Omega)}\;\;\;\;\;\text{if}\;\;\; v \in
 \mathcal{D}(-A),\\ 
\Vert u_{t}(t)\Vert_{H^{s}(\Omega)}&\leq & C t^{-1-(s-1)/2} \Vert v
\Vert_{H^{1}(\Omega)} \;\;\;\;\;\text{if}\;\;\; v \in \mathbb{H}\;\;
\;  s= 0,1, \\ 
\Vert u_{t}(t)\Vert_{H^{2}(\Omega)}&\leq & C t^{-s/2} \Vert v
\Vert_{H^{2}(\Omega)} \;\;\;\;\;\text{if}\;\;\; v \in
\mathcal{D}(-A)\;\; \;  s= 0,1.
\end{eqnarray*}
Using these in the proof of \cite[Theorem 3.2]{Stig} gives the result.
\end{proof}
\end{lemma}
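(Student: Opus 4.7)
The plan is to reduce all four estimates to the classical finite-element theory for the homogeneous linear parabolic problem and then verify that the Robin/mixed boundary conditions allowed by Assumptions~\ref{assumption1}--\ref{assumption2} do not obstruct that theory. The natural object to introduce is the Ritz projection $R_h : V \to V_h$ associated with the coercive bilinear form $a(\cdot,\cdot)$ induced by $-A$. For Robin conditions this form picks up a boundary term $\int_{\partial\Omega}\sigma uv\,ds$ but remains $V$-coercive, so $R_h$ is well defined and, by \eqref{regulard} combined with an Aubin--Nitsche argument, satisfies $\|w-R_h w\|+h\|w-R_h w\|_{H^1}\le Ch^r\|w\|_{H^r}$ for $w\in V\cap H^r(\Omega)$, $r=1,2$.

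I then split $u(t)-u_h(t)=\rho(t)+\theta(t)$ with $\rho(t)=u(t)-R_h u(t)$ and $\theta(t)=R_h u(t)-u_h(t)\in V_h$. The discrete function $\theta$ satisfies $\theta'(t)+A_h\theta(t)=-P_h\rho'(t)$ with $\theta(0)=R_h v-P_hv$, so Duhamel gives
\begin{equation*}
\theta(t)=S_h(t)(R_hv-P_hv)-\int_0^tS_h(t-s)P_h\rho'(s)\,ds.
\end{equation*}
The Ritz estimates bound $\rho(t)$ and $R_hv-P_hv$ directly, while $\rho'(t)$ is controlled by applying the Ritz estimate to $u_t(s)=Au(s)=AS(s)v$, for which the temporal regularity list at the end of the lemma statement supplies the needed $H^r$-bounds. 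For the smooth-data estimates \eqref{form3} and \eqref{form5} the uniform bounds $\|u(t)\|_{H^2}\le C\|v\|_{H^2}$ and $\|u_t(s)\|_{H^2}\le Cs^{-1/2}\|v\|_{H^2}$ combine with the Ritz bounds and the $L^2$-stability of $S_h$ to close the estimates by a single integration.

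The main obstacle is producing the smoothing factor $t^{-1/2}$ in \eqref{form2} and \eqref{form4} from merely $L^2$ or $H^1$ data: a direct substitution of $\|u_t(s)\|_{H^2}\le Cs^{-1}\|v\|$ (from Proposition~\ref{prop1}) into the Duhamel representation yields a non-integrable singularity at $s=0$. The standard Thom\'ee remedy, used in \cite[Thm.~3.1]{Stig} and the corresponding results in \cite{Vidar}, is to split $\int_0^t=\int_0^{t/2}+\int_{t/2}^t$: on $[0,t/2]$ one bounds $S_h(t-s)$ using its smoothing at the fixed scale $(t-s)\asymp t$ and pays only the weaker norm of $\rho'$, while on $[t/2,t]$ the solution $u(s)$ has already regularised so that the stronger bound on $\|u_t(s)\|_{H^2}$ is integrable against the $s$-weight. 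Combined with the initial-layer contribution $S_h(t)(R_hv-P_hv)$, estimated via $(-A_h)^{1/2}$-smoothing of $S_h$ and the Ritz inequality, this produces the $t^{-1/2}h^2$ bound in \eqref{form2}; the $H^1$ analogue \eqref{form4} is obtained by the same splitting together with the energy identity $\tfrac12\frac{d}{dt}\|\theta\|^2+\|(-A_h)^{1/2}\theta\|^2=-(P_h\rho',\theta)$ and the norm equivalence $\|\cdot\|_{H^1}\equiv\|(-A)^{1/2}\cdot\|$ on $\HH$.

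Throughout, the only adaptation of the arguments in \cite{Vidar,Stig} needed for our setting is to carry the extra boundary term in $a(\cdot,\cdot)$ through the coercivity and Galerkin-orthogonality steps; since this term is symmetric and nonnegative, it is absorbed into the coercive part of the form and all subsequent manipulations go through unchanged. This is what the authors' sketch alludes to when it says the Dirichlet proof ``generalises easily to Robin or mixed boundary conditions''.
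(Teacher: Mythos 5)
Your overall strategy---the Ritz-projection splitting $u-u_h=\rho+\theta$, Duhamel for $\theta$, Aubin--Nitsche duality for $R_h$, and absorbing the symmetric nonnegative Robin boundary term into the coercive bilinear form---is precisely the classical machinery behind the references the paper invokes; the paper's own proof is essentially a citation of \cite{Vidar} and of the proof of Theorem 3.1/3.2 in \cite{Stig} with zero nonlinearity, supplemented by the listed smoothing estimates for $u(t)=S(t)v$. For \eqref{form3}, \eqref{form4} and \eqref{form5} your outline is sound and matches what those references do.

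The genuine gap is at \eqref{form2}. First, for $v\in L^{2}(\Omega)$ the Ritz projection $R_hv$ is undefined ($R_h$ requires $v\in V$), so your initial-layer term $S_h(t)(R_hv-P_hv)$ does not exist in the nonsmooth case, and the Ritz inequality you propose to estimate it with costs at least $\Vert v\Vert_{H^{1}(\Omega)}$, never $\Vert v\Vert$ alone. Second, the $\int_0^{t/2}+\int_{t/2}^{t}$ splitting, after the integration by parts that is forced on $[0,t/2]$, leaves a term bounded by $Ct^{-1}h^{2}\Vert v\Vert\int_0^{t/2}s^{-1}\,ds$, which diverges logarithmically; the clean nonsmooth-data result requires the weighted-energy argument of \cite[Thm.~3.5]{Vidar} (or its analogue in \cite{Stig}), not plain Duhamel. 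Third, and decisively, what that classical argument delivers from $v\in L^{2}$ is $\Vert T_h(t)v\Vert\leq Ch^{2}t^{-1}\Vert v\Vert$, while interpolation gives $\Vert T_h(t)v\Vert\leq Ch^{2}t^{-1/2}\Vert v\Vert_{H^{1}(\Omega)}$; the bound $Ch^{2}t^{-1/2}\Vert v\Vert$ that you claim your assembly ``produces'' is not reachable by these means and is in fact false in operator norm: at $t\approx h^{2}$ it would force $\Vert T_h(t)\Vert_{L(L^{2}(\Omega))}\leq Ch$, whereas for unit $v$ that is $L^{2}$-orthogonal to $V_h$ (so $T_h(t)v=S(t)v$) with spectral content at frequencies of order $h^{-2}$, $\Vert S(h^{2})v\Vert$ stays bounded below independently of $h$. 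So either you must strengthen the hypothesis to $v\in\mathcal{D}((-A)^{1/2})$ and prove $Ch^{2}t^{-1/2}\Vert v\Vert_{H^{1}(\Omega)}$, or weaken the conclusion to the classical $Ch^{2}t^{-1}\Vert v\Vert$; this is a defect inherited from the lemma's statement itself (the cited nonsmooth-data rate in \cite{Vidar} is $t^{-1}$, not $t^{-1/2}$), but your sketch glosses over exactly the step where the discrepancy would surface.
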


We now consider the SPDE \eqref{adr}
\begin{lemma}
\label{lemme2}
Let $X$ be the  mild solution  of (\ref{adr}) given  in (\ref{eq1}),
let  $0 \leq \gamma < 1 $ and $t_{1}, t_{2} \in [0,T],\quad t_{1}<
t_{2}$.

(i) If $X_{0} \in  \mathcal{D}((-A)^{\gamma}$), $\Vert
(-A)^{\alpha/2 }Q^{1/2}\Vert_{HS}<\infty$ with $0 \leq \alpha \leq 2$
and suppose $F$ satisfies  \assref{assumption4} (a). Set  $\sigma=
\min( \gamma,1/2,\alpha/2)$ then  
\begin{eqnarray*}
\textbf{E}\Vert X(t_{2})- X(t_{1}) \Vert^{2} &\leq&  C
(t_{2}-t_{1})^{2\sigma} \left(\textbf{E} \Vert
  X_{0}\Vert_{\gamma}^{2}+\textbf{E}\left(\underset{0\leq s\leq
      T}{\sup} \left(\Vert F(0)\Vert+ \Vert X(s)\Vert
    \right)\right)^{2}+1 \right).
\end{eqnarray*}
Furthermore 
\begin{eqnarray*}
 \mathbf{E}\Vert (X(t_{2})-O(t_{2}))-( X(t_{1})-O(t_{1})) \Vert^{2}
 &\leq& C (t_{2}-t_{1})^{2 \gamma}\;\;\;\;0 \leq \gamma \leq 1 . 
\end{eqnarray*}
(ii) If $X_{0} \in  \mathcal{D}((-A)^{(\gamma+1)/2})$, $\Vert
(-A)^{1/2 }Q^{1/2}\Vert_{HS}<\infty$ and $ F(X) \in H^{1}(\Omega)$ with $\mathbf{E}\left(\underset{0\leq s\leq
      T}{\sup} \Vert
    F(X(s))\Vert_{H^{1}(\Omega)}\right)^{2} < \infty $
then 
\begin{eqnarray*}
\mathbf{E}\Vert X(t_{2})- X(t_{1}) \Vert^{2} \leq C
(t_{2}-t_{1})^{\gamma}\left(\textbf{E} \Vert
  X_{0}\Vert_{(\gamma+1)/2}^{2}+\mathbf{E}\left(\underset{0\leq s\leq
      T}{\sup} \Vert
    F(X(s))\Vert_{H^{1}(\Omega)}\right)^{2}+1\right). 
\end{eqnarray*}
\end{lemma}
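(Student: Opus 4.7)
The plan is to decompose the difference $X(t_2)-X(t_1)$ using the mild-solution representation \eqref{eq1} and bound each term separately, exploiting the smoothing estimates of \propref{prop1}. Writing
\begin{eqnarray*}
X(t_2)-X(t_1) &=& (S(t_2)-S(t_1))X_0 + \int_{t_1}^{t_2}S(t_2-s)F(X(s))\,ds \\
&& +\int_0^{t_1}(S(t_2-s)-S(t_1-s))F(X(s))\,ds + (O(t_2)-O(t_1)),
\end{eqnarray*}
I would estimate the four pieces individually and then combine via $(a+b+c+d)^2\leq 4(a^2+b^2+c^2+d^2)$.

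For the first piece, I would rewrite $S(t_2)-S(t_1)=S(t_1)(S(t_2-t_1)-I)$, insert $(-A)^{-\gamma}(-A)^{\gamma}$, and apply the bound $\|(-A)^{-\gamma}(I-S(\tau))\|\leq C\tau^{\gamma}$ together with $\|S(t_1)\|\leq C$, yielding $\|(S(t_2)-S(t_1))X_0\|\leq C(t_2-t_1)^{\gamma}\|X_0\|_{\gamma}$. For the second piece I would use $\|S(\cdot)\|\leq C$ together with the linear growth of $F$ from \assref{assumption4}(a), producing a bound of order $(t_2-t_1)\sup_s(\|F(0)\|+\|X(s)\|)$. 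For the third piece I would factorize $S(t_2-s)-S(t_1-s)=S(t_1-s)(S(t_2-t_1)-I)$, split as $(-A)^{\beta}S(t_1-s)\cdot(-A)^{-\beta}(I-S(t_2-t_1))$ for any $\beta\in(\sigma,1)$, apply \propref{prop1} twice, and observe that $\int_0^{t_1}(t_1-s)^{-\beta}ds$ converges; this yields rate $(t_2-t_1)^{\beta}\geq(t_2-t_1)^{\sigma}$ in norm.

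For the stochastic increment, I would split $O(t_2)-O(t_1)=\int_{t_1}^{t_2}S(t_2-s)dW(s)+\int_0^{t_1}S(t_1-s)(S(t_2-t_1)-I)dW(s)$ and apply Ito's isometry. Expanding in the eigenbasis of $A$ and $Q$, the first term becomes $\sum_i\frac{q_i}{2\lambda_i}(1-e^{-2\lambda_i(t_2-t_1)})$ and the second becomes $\sum_i\frac{q_i}{2\lambda_i}(1-e^{-2\lambda_i t_1})(1-e^{-\lambda_i(t_2-t_1)})^2$. Using $1-e^{-x}\leq x^{\alpha/2}\wedge 1$ (valid since $0\leq\alpha\leq 2$) together with $\inf_i\lambda_i>0$ to control $\sum q_i\lambda_i^{\alpha-1}$ via $\|(-A)^{\alpha/2}Q^{1/2}\|_{HS}^2$, both sums are dominated by $C(t_2-t_1)^{\min(\alpha,1)}\leq C(t_2-t_1)^{2\sigma}$. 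This gives the first stated bound; the second (for $X-O$) is just the same argument with the stochastic term omitted, so the rate is $(t_2-t_1)^{2\gamma}$ for $0\leq\gamma\leq 1$ (where the third piece now uses $\beta=\gamma$).

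For part (ii), the structure is identical but the regularity inputs change: assumption $X_0\in\mathcal{D}((-A)^{(\gamma+1)/2})$ (which in particular controls $\|X_0\|_{\gamma}$) handles the first piece at rate $(t_2-t_1)^{\gamma/2}$; the assumption $F(X(s))\in H^1(\Omega)$ with uniformly bounded $H^1$ norm allows me to write $\|(I-S(t_2-t_1))F(X(s))\|=\|(-A)^{-1/2}(I-S(t_2-t_1))(-A)^{1/2}F(X(s))\|\leq C(t_2-t_1)^{1/2}\|F(X(s))\|_{H^1(\Omega)}$, which dominates the required $(t_2-t_1)^{\gamma/2}$ for $\gamma\leq 1$; and the stochastic piece is bounded at rate $(t_2-t_1)^{\min(1,1)}=t_2-t_1$ using $\|(-A)^{1/2}Q^{1/2}\|_{HS}<\infty$. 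The main obstacle I anticipate is the careful spectral bookkeeping for the stochastic term, in particular justifying the inequality $1-e^{-x}\leq x^{\alpha/2}$ uniformly in the index $i$ so as to extract exactly the exponent $\min(\alpha,1)$ rather than a suboptimal rate; everything else is a routine application of \propref{prop1} together with $L^2$ control of $\sup_{s\leq T}\|X(s)\|$ obtained by standard Gronwall arguments on the mild formulation.
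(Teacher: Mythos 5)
Your overall strategy — decompose via the mild formulation, treat the initial-data term with $S(t_2)-S(t_1)=S(t_1)(S(t_2-t_1)-I)$ and \propref{prop1}, and handle the stochastic convolution increment by Ito isometry in the joint eigenbasis of $A$ and $Q$ — is the same skeleton the paper uses, and your spectral bookkeeping for the noise term (using $1-e^{-x}\le x^{\beta}$ for $\beta\in[0,1]$ and $\inf_i\lambda_i>0$ to reduce $\sum_i q_i\lambda_i^{\beta-1}$ to $\Vert(-A)^{\alpha/2}Q^{1/2}\Vert_{HS}^2$) is sound. However, there is a genuine gap in your treatment of the term $\int_0^{t_1}(S(t_2-s)-S(t_1-s))F(X(s))\,ds$ for the second claim of (i). You propose to write $S(t_2-s)-S(t_1-s)=(-A)^{\beta}S(t_1-s)\cdot(-A)^{-\beta}(I-S(t_2-t_1))$ with $\beta=\gamma$, but the resulting integral $\int_0^{t_1}(t_1-s)^{-\beta}\,ds$ diverges at $\beta=1$, so your argument only yields $(t_2-t_1)^{2\gamma}$ for $\gamma<1$ strictly and fails at the endpoint $\gamma=1$ — which is precisely the case invoked in the proof of \thmref{th1} for $X_0\in\mathcal{D}(-A)$ (to obtain the $\Delta t^{2\theta}$ rate rather than $\Delta t^{2\theta-\epsilon}$). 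The paper circumvents this by further splitting $II_1=\int_0^{t_1}(S(t_2-s)-S(t_1-s))\bigl(F(X(s))-F(X(t_1))\bigr)ds+\int_0^{t_1}(S(t_2-s)-S(t_1-s))F(X(t_1))\,ds$: the first piece gains integrability from the H\"older continuity of $X$ (the first claim of (i), which the paper quotes from \cite{GTambue} rather than reproving), and the second is estimated by the exact identity $\int_0^{t_1}\bigl(S(t_2-s)-S(t_1-s)\bigr)ds=\int_{t_1}^{t_2}S(s)\,ds-\int_0^{t_2-t_1}S(s)\,ds$, whose norm is $O(t_2-t_1)$ with no loss. You need one of these devices (or an equivalent) to reach $\gamma=1$.

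A second discrepancy concerns part (ii). Although the displayed inequality in the statement carries no subscript, the paper's proof of (ii) estimates $\mathbf{E}\Vert X(t_2)-X(t_1)\Vert_{H^{1}(\Omega)}^{2}$ throughout (each of $I$, $II$, $III$ is bounded in $\Vert\cdot\Vert_{H^1(\Omega)}$, using $\Vert(-A)^{1/2}S(t)\Vert\le Ct^{-1/2}$ and $\Vert(-A)^{1/2}Q^{1/2}\Vert_{HS}<\infty$), and it is this $H^1$ modulus of continuity that is invoked in the proof of \thmref{th2} to bound $I_3$. Your version of (ii) controls only the $L^2$ increment (e.g., your bound $\Vert(-A)^{-1/2}(I-S(t_2-t_1))(-A)^{1/2}F(X(s))\Vert\le C(t_2-t_1)^{1/2}\Vert F(X(s))\Vert_{H^{1}(\Omega)}$ lives in $L^2$), which is strictly weaker than what the lemma is used for; you would need to redo the estimates with an extra factor $(-A)^{1/2}$ on the left, paying an $\epsilon$ via $(-A)^{(1-\epsilon)/2}S(t_1-s)$ as the paper does. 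Finally, note that the paper does not reprove the first claim of (i) at all (it cites \cite{GTambue}), so your sketch there is extra work, but it is consistent with the standard argument.
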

\begin{remark}
 Before doing the proof it is important to notice that if $X_{0}\in \mathcal{D}((-A)^{\gamma})$ with $\mathbf{E} \Vert(-A)^{\gamma}X_{0})\Vert^{l} < \infty,\, l=2,4 $,
 Assumption 1-4 ensure the existence of the unique solution $X \in \mathcal{D}((-A)^{\gamma})$ 
such that
$$ \mathbf{E}\left(\underset{0\leq s\leq T}{\sup} \Vert
    (-A)^{\gamma}X(s))\Vert^{l}\right)< \infty. $$

In general  if $X_{0}\in \mathcal{D}((-A)^{\gamma})$ and $O(t) \in \mathcal{D}((-A)^{\alpha})$ then $X \in \mathcal{D}((-A)^{min(\gamma,\alpha)})$ with 
$$ \mathbf{E}\left(\underset{0\leq s\leq T}{\sup} \Vert
    (-A)^{min(\gamma,\alpha)}X(s)\Vert^{l}\right)< \infty .$$
More  information about properties  of the solution of the SPDE (\ref{adr}) can be found in \cite{Jentzen4}.
\end{remark}
\begin{proof}
The first claim of part (i) of the Lemma can be found in
\cite{GTambue} and so we prove the second part of (i).  Consider the
difference 
\begin{eqnarray*}
\lefteqn{ ( X(t_{2}) +O(t_{2})- (X(t_{1})+O(t_{1}) ))}\\
&=&  \left(S(t_{2})-S(t_{1})\right)X_{0}+\left(
  \int_{0}^{t_{2}}S(t_{2}-s)F(X(s))ds -
  \int_{0}^{t_{1}}S(t_{1}-s)F(X(s))ds\right) \\    
&=& I +II 
\end{eqnarray*}
so that 
$$ \textbf{E}  \Vert ( X(t_{2}) +O(t_{2}))- (X(t_{1})+O(t_{1}))\Vert^{2} 
\leq 2 (\textbf{E}  \Vert I \Vert^{2} + \textbf{E} \Vert II \Vert^{2}) .$$ 
We estimate each of the terms $I, II$ . For  $\;0 \leq \gamma \leq 1 $, using
\propref{prop1} yields
\begin{eqnarray*}
 \Vert I \Vert
&=&\Vert S(t_{1})(-A)^{-\gamma}(\text{I}-S(t_{2}-t_{1})) (-A)^{\gamma}
X_{0} \Vert  
\quad \leq \quad   C (t_{2}-t_{1})^{\gamma} \Vert X_{0} \Vert_{\gamma}.
\end{eqnarray*}
Then $ \textbf{E} \Vert I \Vert^{2} \leq  C (t_{2}-t_{1})^{2
  \gamma}\textbf{E} \Vert X_{0} \Vert_{\gamma}^{2}$.  
For the term $II$, we have 
\begin{eqnarray*}
II&= &\int_{0}^{t_{1}}(S(t_{2}-s)-S(t_{1}-s))F(X(s))ds +\int_{t_{1}}^{t_{2}}S(t_{2}-s)F(X(s))ds\\
  &=& II_{1}+II_{2}.
 \end{eqnarray*}
We now estimate each term $II_1$ and $II_2$.  For $ \textbf{E} \Vert
II_2 \Vert ^{2}$ boundedness of $S$ gives
\begin{eqnarray*}
  \textbf{E} \Vert II_2 \Vert ^{2} &\leq& \left(\int_{t_{1}}^{t_{2}}
    \textbf{E} \Vert S(t_{2}-s)F(X(s)) \Vert ds\right)^{2}\\ 
&\leq&  C \left( t_{2}-t_{1}\right)^{2}  \textbf{E}
\left(\underset{0\leq s\leq T}{\sup} \Vert F(X(s))\Vert\right)^{2}. 
\end{eqnarray*}
For $ \textbf{E} \Vert II_1 \Vert ^{2}$ we have 
\begin{eqnarray*}
  II_{1}&=&\int_{0}^{t_{1}}(S(t_{2}-s)-S(t_{1}-s))F(X(s))ds\\
  &=& \int_{0}^{t_{1}}(S(t_{2}-s)-S(t_{1}-s))
  \left( F(X(s))-F(X(t_{1}))\right)ds \\ 
  &+& \int_{0}^{t_{1}}(S(t_{2}-s)-S(t_{1}-s))F(X(t_{1}))ds\\
  &=& II_{11}+ II_{12}.
\end{eqnarray*}
Using the Lipschitz condition in \assref{assumption4} (a)
with the first claim of (i) yields   
\begin{eqnarray*}
 \textbf{E} \Vert II_{11} \Vert ^{2} &\leq&\left(\int_{0}^{t_{1}}(
   S(t_{2}-s)-S(t_{1}-s))\textbf{E}\Vert (X(s)-X(t_{1})\Vert
   ds\right)^{2}\\ 
 &\leq& C \left((t_{2}-t_{1}) \int_{0}^{t_{1}} (t_{1}-s)^{\sigma-1}
   ds\right)^{2}\\ 
 &\leq & C \left(t_{2}-t_{1}\right)^{2}.
\end{eqnarray*}
\assref{assumption4} (a) gives
 \begin{eqnarray*}
   \left(\textbf{E} \Vert II_{12} \Vert ^{2}\right)^{1/2} &\leq&
   \left(\textbf{E}\Vert F(X(t_{1})\Vert^{2} \right)^{1/2}
   \Vert\int_{0}^{t_{1}}( S(t_{2}-s)-S(t_{1}-s)) ds\Vert\\ 
   &\leq& C \Vert \int_{0}^{t_{1}} S(t_{2}-s)-S(t_{1}-s)ds \Vert 
\end{eqnarray*}
Using the fact that $S$ is bounded we find
 \begin{eqnarray*}
   \left(\textbf{E} \Vert II_{12} \Vert ^{2}\right)^{1/2} 
   & \leq& C  \Vert S(t_{1})\int_{0}^{t_{1}} \left( S(t_{2}-
     t_{1}-s)-S(-s) \right) ds \Vert\\ 
   &\leq & C \Vert\int_{0}^{t_{1}} \left( S(t_{2}- t_{1}+s)-S(s)
   \right) ds \Vert \\ 
   &= & C \Vert\int_{t_{2}- t_{1}}^{t_{2}} S(s) ds - \int_{0}^{t_{1}}
   S(s) ds \Vert \\ 
   &= & C \Vert\int_{t_{2}- t_{1}}^{t_{1}} S(s) ds +
   \int_{t_{1}}^{t_{2}} S(s) ds - \int_{0}^{t_{1}} S(s)ds \Vert \\ 
   &= & C \Vert  \int_{t_{1}}^{t_{2}} S(s) ds - \int_{0}^{t_{2}-t_{1}}
   S(s)ds \Vert \\ 
   &\leq& C (t_{2}-t_{1}).
\end{eqnarray*}  
Combining the previous estimates ends the proof of the second claim of
(i). 

We now prove part (ii) of the lemma. Consider the difference  
\begin{eqnarray*}
\lefteqn{  X(t_{2})- X(t_{1}) }\\
&=& \left(S(t_{2})-S(t_{1})\right)X_{0}+
  \left(\int_{0}^{t_{2}}S(t_{2}-s)
    F(X(s))ds-\int_{0}^{t_{1}}S(t_{1}-s)F(X(s))ds \right)\\  
  & & +\left(\int_{0}^{t_{2}}S(t_{2}-s)dW(s) 
    - \int_{0}^{t_{1}}S(t_{1}-s)dW(s)\right)\\  
  &=& I +II +III
\end{eqnarray*}
and then
\begin{eqnarray*}
 \textbf{E}  \Vert X(t_{2})- X(t_{1})\Vert_{H^{1}(\Omega)}^{2} \leq 3
 \left(\textbf{E}  \Vert I \Vert_{H^{1}(\Omega)}^{2} + \textbf{E} \Vert II
 \Vert_{H^{1}(\Omega)}^{2} + \textbf{E} \Vert III
 \Vert_{H^{1}(\Omega)}^{2}\right). 
\end{eqnarray*}
Let us estimate the terms $I, II$ and $III$ and we start with $I$. 
If $X_{0} \in \mathcal{D}((-A)^{(\gamma+1)/2})$ using
\propref{prop1} yields 
\begin{eqnarray*}
 \Vert I \Vert_{H^{1}(\Omega)}&=& \Vert (-A)^{1/2}
 S(t_{1})(\text{I}-S(t_{2}-t_{1})) X_{0} \Vert \\ 
&=& \Vert(-A)^{1/2} S(t_{1})(\text{I}-S(t_{2}-t_{1}))
(-A)^{-\gamma/2}(-A)^{\gamma/2} X_{0} \Vert \\ 
&=&\Vert S(t_{1})(-A)^{-\gamma/2}(\text{I}-S(t_{2}-t_{1}))
(-A)^{(\gamma+1)/2} X_{0} \Vert \\ 
&\leq&  C (t_{2}-t_{1})^{\gamma/2} \Vert X_{0} \Vert_{(\gamma+1)}. 
\end{eqnarray*}
Then
\begin{eqnarray*}
 \textbf{E} \Vert I \Vert_{H^{1}(\Omega}^{2} \leq  C (t_{2}-t_{1})^{\gamma} \Vert
 X_{0} \Vert_{(\gamma+1)}^{2}. 
\end{eqnarray*}
For the term $II$, we have 
\begin{eqnarray*}
II&= &\int_{0}^{t_{1}}(S(t_{2}-s)-S(t_{1}-s))F(X(s))ds
+\int_{t_{1}}^{t_{2}}S(t_{2}-s)F(X(s))ds\\ 
&=& II_{1}+II_{2}.
\end{eqnarray*}
We now estimate each term above. 
Using the fact that in $\mathcal{D}((-A)^{1/2})$ we have the
equivalency of norm $\Vert.\Vert_{H^{1}(\Omega}\equiv \Vert (-A)^{1/2}
.\Vert$,  
we have  
\begin{eqnarray}
  \label{ineq}
  \Vert S(t)\Vert_{L(H^{1}(\Omega))}\leq \Vert (-A)^{1/2} S(t)\Vert
\end{eqnarray}
where $\Vert S(t)\Vert_{L(H^{1}(\Omega))}$ is the norm of the semigroup
viewed as a bounded operator in $H^{1}(\Omega)$. We also have the
similar relationship for the operator $S(t_{1})-S(t_{2})$ with
$t_{1}$, $t_{2} \in [0,T]$. 

Using similar inequality as (\ref{ineq}) yields
\begin{eqnarray*}
 \textbf{E} \Vert II_{1} \Vert_{H^{1}(\Omega)}^{2} &=& \textbf{E}\Vert
 \int_{0}^{t_{1}}(S(t_{2}-s)-S(t_{1}-s))F(X(s))ds
 \Vert_{H^{1}(\Omega)}^{2}\\ 
  &\leq&  \textbf{E} \left(\int_{0}^{t_{1}}\Vert
    (S(t_{2}-s)-S(t_{1}-s)) F(X(s))\Vert_{H^{1}(\Omega)} ds\right)^{2}
  \;\;\\ 
&\leq&  \left(\int_{0}^{t_{1}}\Vert
  (S(t_{2}-s)-S(t_{1}-s))\Vert_{H^{1}(\Omega)} ds\right)^{2} \,
\textbf{E} \left(\underset{0\leq s\leq T}{\sup} \Vert
  F(X(s))\Vert_{H^{1}(\Omega)}\right)^{2}\\ 
&\leq&  \left(\int_{0}^{t_{1}}\Vert
  (-A)^{1/2}(S(t_{2}-s)-S(t_{1}-s))\Vert ds\right)^{2} \,\textbf{E}
\left(\underset{0\leq s\leq T}{\sup} \Vert
  F(X(s))\Vert_{H^{1}(\Omega)}\right)^{2}.
\end{eqnarray*}
For $\epsilon \in (0,1)$ small enough, we have 
\begin{eqnarray*}
\lefteqn{ \textbf{E} \Vert II_{1} \Vert_{H^{1}(\Omega)}^{2}}\\
 &\leq&
 \left(\int_{0}^{t_{1}}\Vert (-A)^{(1-\epsilon)/2 } S(t_{1}-s)
   (-A)^{(\epsilon-1)/2 }((\text{I}-S(t_{2}-t_{1})))\Vert ds
 \right)^{2} \, \textbf{E}\left(\underset{0\leq s\leq T}{\sup} \Vert
   F(X(s))\Vert_{H^{1}(\Omega)}\right)^{2}\\ 
  &\leq & C
  (t_{2}-t_{1})^{1-\epsilon}\left(\int_{0}^{t_{1}}(t_{1}-s)^{(1-\epsilon)/2}ds  \right)^{2} \, \textbf{E}\left(\underset{0\leq s\leq T}{\sup} \Vert F(X(s))\Vert_{H^{1}(\Omega)}\right)^{2}\\    
&\leq & C  (t_{2}-t_{1})^{1-\epsilon} \,
\textbf{E}\left(\underset{0\leq s\leq T}{\sup} \Vert
  F(X(s))\Vert_{H^{1}(\Omega)}\right)^{2}. 
\end{eqnarray*}
We also have using \propref{prop1}
\begin{eqnarray*}
 \textbf{E} \Vert II_{2} \Vert_{H^{1}(\Omega)}^{2} &=&\textbf{E} \Vert
 \int_{t_{1}}^{t_{2}}S(t_{2}-s)F(X(s))ds\Vert_{H^{1}(\Omega)}^{2}\\ 
&\leq& \textbf{E} \left(\int_{t_{1}}^{t_{2}}\Vert S(t_{2}-s)F(X(s))
  \Vert_{H^{1}(\Omega)} ds \right)^{2}\\ 
&\leq& \textbf{E} \left(\int_{t_{1}}^{t_{2}}\Vert
  S(t_{2}-s)\Vert_{L(H^{1}(\Omega))}\Vert
  F(X(s))\Vert_{H^{1}(\Omega)}ds\right)^{2}\\ 
&\leq& \textbf{E} \left(\int_{t_{1}}^{t_{2}}\Vert
  (-A)^{1/2}(S(t_{2}-s))\Vert\Vert
  F(X(s))\Vert_{H^{1}(\Omega)}ds\right)^{2}\\ 
&\leq& \left( \int_{t_{1}}^{t_{2}}(t_{2}-s)^{-1/2} ds \right)^{2}
\textbf{E}\left(\underset{0\leq s\leq T}{\sup}\Vert
  F(X(s))\Vert_{H^{1}(\Omega)}\right)^{2}\\ 
&\leq& C (t_{2}-t_{1})\textbf{E}\left(\underset{0\leq s\leq
    T}{\sup}\Vert F(X(s))\Vert_{H^{1}(\Omega)}\right)^{2}. 
\end{eqnarray*}
Hence, if  $ F(X) \in H^{1}(\Omega) $ with $\mathbf{E}\left(\underset{0\leq s\leq
      T}{\sup} \Vert
    F(X(s))\Vert_{H^{1}(\Omega)}\right)^{2}< \infty $, we have 
\begin{eqnarray*}
\textbf{E} \Vert II\Vert^{2}\leq 2( \textbf{E} \Vert II_{1} \Vert^{2}
+\textbf{E} \Vert II_{2} \Vert^{2} )\leq C (t_{2}-t_{1})^{\gamma}
\textbf{E}\left(\underset{0\leq s\leq T}{\sup} \Vert
  F(X(s))\Vert_{H^{1}(\Omega)}\right)^{2}.  
\end{eqnarray*}
We also have for the term $III$  
\begin{eqnarray*}
  III & = & 
  \int_{0}^{t_{2}}S(t_{2}-s)dW(s)-\int_{0}^{t_{1}}S(t_{1}-s)dW(s)\\
&=&\int_{0}^{t_{1}}\left( S(t_{2}-s)-S(t_{1}-s)\right)dW(s)+
\int_{t_{1}}^{t_{2}}S(t_{2}-s)dW(s)\\  
  &=&III_{1} +III_{2}.
\end{eqnarray*}
The Ito isometry property yields 
\begin{eqnarray*}
  \textbf{E} \Vert III_{1} \Vert_{H^{1}(\Omega)}^{2}&=&
  \textbf{E}\Vert \int_{0}^{t_{1}}\left(
    S(t_{2}-s)-S(t_{1}-s)\right)dW(s)\Vert_{H^{1}(\Omega)}^{2}\\ 
  &\leq &  \int_{0}^{t_{1}}\textbf{E} \Vert
  (-A)^{1/2}\left(S(t_{2}-s)-S(t_{1}-s)\right)Q^{1/2}\Vert_{HS}^{2}
  ds\\ 
  &=&\int_{0}^{t_{1}}\textbf{E} \Vert\left(
    S(t_{2}-s)-S(t_{1}-s)\right)(-A^{1/2})Q^{1/2}\Vert_{HS}^{2} ds.
\end{eqnarray*}
Using \propref{prop1}, the fact that $S(t)$ is bounded  and
$\Vert (-A)^{1/2}Q^{1/2}\Vert_{HS}<\infty$  yields 
\begin{eqnarray*}
 \textbf{E} \Vert III_{1} \Vert_{H^{1}(\Omega)}^{2}&\leq & C
 \int_{0}^{t_{1}} \Vert ( S(t_{2}-s)-S(t_{1}-s)) \Vert^{2} ds \\ 
&= & C\int_{0}^{t_{1}} \Vert (-A)^{(1-\epsilon)/2 }
S(t_{1}-s)(-A)^{-(1-\epsilon)/2 }( \text{I}-S(t_{2}-t_{1}))\Vert^{2}
ds \\ 
&\leq & C (t_{2}-t_{1})^{1-\epsilon} \int_{0}^{t_{1}}
(t_{1}-s)^{-1+\epsilon}ds \\ 
&\leq& C (t_{2}-t_{1})^{1-\epsilon}.
\end{eqnarray*}
with  $\epsilon \in (0,1)$ small enough.
Let us estimate  $\textbf{E} \Vert III_{2} \Vert_{H^{1}(\Omega)} $.
The fact that $\Vert (-A)^{1/2}Q^{1/2}\Vert_{HS}<\infty$ yields 
\begin{eqnarray*}
 \textbf{E} \Vert III_{2} \Vert_{H^{1}(\Omega)}^{2}&=& \textbf{E}\Vert
 \int_{t_{1}}^{t_{2}}S(t_{2}-s))dW(s)\Vert_{H^{1}(\Omega)}^{2}\\ 
&\leq &  \int_{t_{1}}^{t_{2}} \Vert (-A)^{1/2}
S(t_{2}-s)Q^{1/2}\Vert_{HS}^{2} ds\\ 
&=&  \int_{t_{1}}^{t_{2}} \Vert S(t_{2}-s)(-A)^{1/2}
Q^{1/2}\Vert_{HS}^{2} ds\\ 
&\leq& C (t_{2}-t_{1}).
\end{eqnarray*}
Hence 
\begin{eqnarray*}
\textbf{E} \Vert III\Vert^{2}\leq 2( \textbf{E} \Vert III_{1}
\Vert^{2} +\textbf{E} \Vert III_{2} \Vert^{2} )\leq C
(t_{2}-t_{1})^{\gamma}.  
\end{eqnarray*}
Combining the estimates of  $\textbf{E} \Vert
I\Vert^{2},\textbf{E} \Vert II\Vert^{2}$ and $\textbf{E} \Vert
III\Vert^{2}$ ends the proof.  
\end{proof}
\begin{remark}
  If $\gamma\geq 1$ and with more regularity of the noise ($O(t) \in
  \mathcal{D}((-A)^{r}),\; r> 1/2 $) we have  
  \begin{eqnarray*}
    \textbf{E}\Vert X(t_{2})- X(t_{1}) \Vert^{2} \leq C
    (t_{2}-t_{1})^{1-\epsilon}
  \end{eqnarray*}
  for any $\epsilon\in(0,1)$.
  
  We can prove that we can take $\theta=1/2$ for  $\mathcal{V}=
  H^{1}(\Omega)$ or if $O(t) \in \mathcal{D}(-A)$. We have $\theta \neq 1/2$ 
  and close to $1/2$ if $O(t) \in \mathcal{D}((-A)^{1/2})$. These
  estimates follow those used to estimate  $III_{1}$ and  $III_{2}$
  in the proof of \lemref{lemme2}, see \cite{ATthesis}.
\end{remark}

\subsection{Proof of Theorem \ref{th1}}
\label{sec:th1}
The proof follows the same basic steps as in \cite{GTambue}, however here
the discrete semigroup is an exponential. As a consequence the
estimates are different and the proof here is simpler with fewer terms
to estimate.
Set 
\begin{eqnarray*}
 X(t_{m})&=&S(t_{m})X_{0}+\underset{k=0}{\sum^{m-1}}\int_{t_{k}}^{t_{k+1}}S(t_{m}-s)F(X(s))ds+O(t_{m})\\
          &=& \overline{X}(t_{m})+ O(t_{m}).
\end{eqnarray*}
Recall that by construction
\begin{eqnarray*}
X_m^h &=& e^{\Delta t A_{h}}X_{m-1}^{h}+\int_{0}^{\Delta t} e^{(\Delta t-
   s)A_{h}}P_{h}F(X_{m-1}^{h})ds + P_{h}\int_{t_{m-1}}^{t_{m}}
 e^{(t_{m}-s)A_{N}}d W^{N}(s)\\ 
&=& S_{h}(t_{m})
P_{h}X_{0}+\underset{k=0}{\sum^{m-1}}\left(\int_{t_{k}}^{t_{k+1}}
  S_{h}(t_{m}-s)P_{h}F(X_{k}^{h})ds +P_{h}\int_{t_{k}}^{t_{k+1}}
  S_{N}(t_{m}-s)dW^{N}(s) \right)\\ 
&=& S_{h}(t_{m})
P_{h}X_{0}+\underset{k=0}{\sum^{m-1}}\left(\int_{t_{k}}^{t_{k+1}}
  S_{h}(t_{m}-s)P_{h}F(X_{k}^{h})ds\right)+ P_{h}P_{N}O(t_{m})\\ 
&=& Z_{m}^{h} +P_{h}P_{N}O(t_{m}),
\end{eqnarray*}
where 
\begin{eqnarray*}
 Z_{m}^{h}&=& S_{h}(t_{m})
 P_{h}X_{0}+\underset{k=0}{\sum^{m-1}}\left(\int_{t_{k}}^{t_{k+1}}
   S_{h}(t_{m}-s)P_{h}F(X_{k}^{h})ds\right)\\ 
 &=& S_{h}(t_{m})
 P_{h}X_{0}+\underset{k=0}{\sum^{m-1}}\left(\int_{t_{k}}^{t_{k+1}}
   S_{h}(t_{m}-s)P_{h}F(Z_{k}^{h} +P_{h}P_{N}O(t_{k}))ds\right).
\end{eqnarray*}
We now estimate 
$\left(\textbf{E}\Vert X(t_{m})-X_{m}^{h}\Vert^{2}\right)^{1/2}$.
We obviously have
\begin{eqnarray}
X(t_{m})-X_{m}^{h}
  &=& \overline{X}_{t_{m}} + O(t_{m})-X_{m}^{h}\nonumber \\
  &=&\overline{X}(t_{m}) + O(t_{m})-\left(Z_{m}^{h}+P_{h}P_{N}
    O(t_{m})\right)\nonumber \\  
  &=& \left(\overline{X}(t_{m}) -Z_{m}^{h}\right)+
  \left(P_{N}(O(t_{m}))-P_{h}P_{N}(O(t_{m}))\right)+
  \left(O(t_{m})-P_{N}(O(t_{m}))\right)\nonumber \\
  &=& I +II +III. \label{eq:IIIIII}
\end{eqnarray}
Then
$\left(\textbf{E}\Vert X(t_{m})-X_{m}^{h}\Vert^{2}\right)^{1/2}
\leq \left(\textbf{E}\Vert I\Vert^{2}\right)^{1/2}
+\left(\textbf{E}\Vert II\Vert^{2}\right)^{1/2} +\left(\textbf{E}\Vert
  III\Vert^{2}\right)^{1/2} $
and we estimate each term. Since the first term will require the most work
we first estimate the other two.

Let us estimate $\left(\textbf{E}\Vert II\Vert^{2}\right)^{1/2}$.
Using the property (\ref{eq:feError}) of the projection $P_{h}$, the equivalence $\Vert. \Vert_{H^{r}(\Omega)}\equiv \Vert
(-A)^{r/2} .\Vert $\;in $\;\mathcal{D}((-A)^{r/2}) $, the Ito isometry
and the fact that the semigroup is a bounded operator yields 
\begin{eqnarray*}
  \textbf{E}\Vert II\Vert^{2} &\leq& C h^{2 r}  \textbf{E} \Vert( -A
  )^{r/2}\int_{0}^{t_{m}} S(t_{m}-s)dW(s)\Vert^{2}\\ 
  &\leq& C h^{2 r}  \int_{0}^{t_{m}} \Vert
  (-A)^{r/2}S(t_{m}-s)\Vert_{L_{2}^{0}}^{2} ds\\ 
  &\leq& C h^{2 r} \int_{0}^{T} \Vert (-A)^{r/2} Q^{1/2}\Vert_{HS}^{2}
  ds. 
\end{eqnarray*}
Thus, since the noise is in $H^{r}$ we have   $(\textbf{E}\Vert
II\Vert^{2})^{1/2}\leq C h^{r}$. 

For the third term $III$
$$\textbf{E}\Vert III\Vert^{2} =  
\textbf{E} \Vert (\text{I}-P_{N})O(t_{m}) \Vert^{2} = 
\textbf{E} \Vert (\text{I}-P_{N})(-A)^{-r/2} (-A)^{r/2}O(t_{m})\Vert^{2},
$$
and so
$$
\textbf{E}\Vert III\Vert^{2}  \leq  
\Vert (\text{I}-P_{N})(-A)^{-r/2} \Vert^2  \textbf{E} \Vert
(-A)^{r/2} O(t_{m})\Vert^2 \leq  C \left( \underset { j \in
    \mathbb{N}^{d} \backslash \mathcal{I}_{N}} {\inf} \lambda_{j}\right)^{-r}.
$$
We now turn our attention to the first term $\textbf{E}\Vert I\Vert^{2}$. 
Using the definition of $T_h$ from \eqref{form1} the first
term $I$ can be expanded 
\begin{eqnarray}
I &=&  T_{h}X_{0} + \underset{k=0}{\sum^{m-1}} \int_{t_{k}
  t}^{t_{k+1}} S( t_{m}-s) F(X(s))-S_{h}(t_{m}-s)
P_{h}F(Z_{k}^{h}+P_{h}P_{N}O(t_{k}))ds \nonumber \\  
 &=& T_{h}X_{0} + \underset{k=0}{\sum^{m-1}}
 \int_{t_{k}}^{t_{k+1}}S_{h}(t_{m}-s) P_{h}(F(X(t_{k})) 
-F(Z_{k}^{h}+P_{h}P_{N}O(t_{k}))))ds\nonumber \\ 
&& +\underset{k=0}{\sum^{m-1}} \int_{t_{k}}^{t_{k+1}}
S_{h}(t_{m}-s)P_{h}(F(X(s))-F (X(t_{k})))ds\nonumber \\ 
&& +\underset{k=0}{\sum^{m-1}}
\int_{t_{k}}^{t_{k+1}}(S(t_{m}-s)-S_{h}(t_{m}-s)P_{h})F(X(s))ds\nonumber\\  
 & =& I_{1}+I_{2}+I_{3}+I_{4}.\nonumber\\  
\label{eq:I1toI5}
\end{eqnarray}
Then
\begin{eqnarray*}
\left(\textbf{E} \Vert I\Vert^{2}\right)^{1/2}\leq \left(\textbf{E}
  \Vert I_{1} \Vert^{2}\right)^{1/2}+\left(\textbf{E} \Vert I_{2}
  \Vert^{2}\right)^{1/2}+\left(\textbf{E} \Vert I_{3}
  \Vert^{2}\right)^{1/2}+\left(\textbf{E} \Vert I_{4}
  \Vert^{2}\right)^{1/2}. 
\end{eqnarray*}
For $I_1$, if $X_{0} \in \mathcal{D}((-A)^{\gamma})\subset H $, equation
(\ref{form2}) of Lemma \ref{lemme1}  gives
\begin{eqnarray*}
 (\textbf{E} \Vert I_{1} \Vert^{2})^{1/2}\leq C 
 t_{m}^{-1/2}h^{2} \left(\textbf{E} \Vert
   X_{0}\Vert^{2}\right)^{1/2} 
\end{eqnarray*}
and if $X_{0} \in  \mathcal{D}(-A) = \HH \cap H^{2}(\Omega)$, equation
 (\ref{form3}) of Lemma \ref{lemme1}  gives
\begin{eqnarray*}
 (\textbf{E} \Vert I_{1} \Vert^{2})^{1/2}\leq C h^{2} \left(\textbf{E}
   \Vert X_{0}\Vert^{2}_{H^{2}(\Omega)}\right)^{1/2}. 
\end{eqnarray*}
If $F$ satisfies \assref{assumption4} (a), then using the Lipschitz
condition, triangle inequality as well as that $S_{h}(t)$ and $P_{h}$
are bounded operators, we have  
\begin{eqnarray*}
  \left(\textbf{E} \Vert I_{2} \Vert^{2}\right)^{1/2} &\leq & C
  \underset{k=0}{\sum^{m-1}}\int_{t_{k}}^{t_{k+1}}\left 
    (\textbf{E} \Vert
    F(X(t_{k}))-F((Z_{k}^{h}+P_{h}P_{N}O(t_{k}))\Vert^{2}\right)^{1/2}ds\\  
  &\leq & C \underset{k=0}{\sum^{m-1}}\int_{t_{k}}^{t_{k+1} 
  }\left(\textbf{E}\Vert X(t_{k}
    )-X_{k}^{h}\Vert^{2}\right)^{1/2}ds.
\end{eqnarray*}
As $I_3$ needs more work let us estimate $I_4$ first. 
Using the fact $P_{h}, S, S_{h}$ are bounded with 
\eqref{form2} of \lemref{lemme1} yields 
\begin{eqnarray*}
  \left(\textbf{E} \Vert I_{4} \Vert^{2}\right)^{1/2} &\leq&
  \underset{k=0}{\sum^{m-1}}\int_{t_{k}}^{t_{k+1}}  \left(\textbf{E}
    \Vert T_{h}(t_{m}-s) F(X(s))\Vert^2 \right)^{1/2} ds \\
  & \leq &  C h^{2} \underset{0\leq s\leq T}{\sup}\left( \textbf E
    \Vert F( X(s)) \Vert^{2}\right)^{1/2} 
  \left(\int_{0}^{t_{m}}\left(t_{m}-s \right)^{-1/2} \right)\\
& \leq &  C h^{2}.\\
\end{eqnarray*}
Let us estimate $\left(\textbf{E} \Vert I_{3}
  \Vert^{2}\right)^{1/2}$. 
We add in and subtract out $O(s)$ and $O(t_{k})$ yields
\begin{eqnarray*}
\lefteqn{ \left(\textbf{E} \Vert I_{3} \Vert^{2}\right)^{1/2} }\\
&=&
 \left(\textbf{E}\Vert\underset{k=0}{\sum^{m-1}}\int_{t_{k} 
   }^{t_{k+1}}S_{h}(t_{m}-s) P_{h}\left( F(X(s))-F(X(t_{k}
     )+O(s)-O(t_{k})))\right) ds\Vert^{2}\right)^{1/2}\\ 
&& + \left(\textbf{E}\Vert\underset{k=0}{\sum^{m-1}}\int_{t_{k}
    }^{t_{k+1}} S_{h}(t_{m}-s)P_{h}\left(F(X(t_{k}
      )+O(s)-O(t_{k}))- F(X(t_{k})))\right)
  ds\Vert^{2}\right)^{1/2}\\ 
&:=& \left(\textbf{E}\Vert
  I_{3}^{1}\Vert^{2}\right)^{1/2}+\textbf{E}\left(\Vert
  I_{3}^{2}\Vert^{2}\right)^{1/2}. 
\end{eqnarray*}
Applying the Lipschitz  condition in \assref{assumption4}(a), using the 
fact the semigroup is bounded and according  to \lemref{lemme2}, for
$X_{0}\in \mathcal{D}((-A)^{\gamma}),\;\;\; 0 \leq \gamma \leq 1$ 
we therefore have  
\begin{eqnarray*}
 \left(\textbf{E}\Vert I_{3}^{1}\Vert^{2}\right)^{1/2}&\leq& 
C
\underset{k=0}{\sum^{m-1}}\int_{t_{k}}^{t_{k+1}}\left(\textbf{E}\Vert
  ( X(s)-O(s))-(X(t_{k})-O(t_{k}))\Vert^{2}\right)^{1/2}\\ 
&\leq& C
\underset{k=0}{\sum^{m-1}}\int_{t_{k}}^{t_{k+1}}(s-t_{k})^{\gamma}ds
\quad \leq \quad C \Delta t^{\gamma}. 
\end{eqnarray*}
Let us now estimate $\textbf{E}\left(\Vert
  I_{3}^{2}\Vert^{2}\right)^{1/2}$.  
The analysis below follows the same steps as in \cite{Jentzen4}
although the approximating semigroup $S_{h}$ is different
here. Applying a Taylor expansion to $F$ gives 
\begin{eqnarray*}
 \textbf{E}\left(\Vert I_{3}^{2}\Vert^{2}\right)^{1/2}\leq
 I_{3}^{21}+I_{3}^{22}+I_{3}^{23}, 
\end{eqnarray*}
with 
\begin{eqnarray*}
I_{3}^{21}&=&\left(\textbf{E}\Vert\underset{k=0}{\sum^{m-1}}\int_{t_{k}}^{t_{k+1}}
  S_{h}(t_{m}-s)P_{h}F'(X(t_{k}))(O(s)-S(s-t_{k})O(t_{k}))ds
  \Vert^{2}\right)^{1/2}\\  
I_{3}^{22}&=&\left(\textbf{E}\Vert\underset{k=0}{\sum^{m-1}}\int_{t_{k}}^{t_{k+1}}
  S_{h}(t_{m}-s)P_{h}F'(X(t_{k}))(S(s-t_{k})O(t_{k})-O(t_{k}))ds
  \Vert^{2}\right)^{1/2}\\ 
I_{3}^{23}&=&
\left(\textbf{E}\Vert\underset{k=0}{\sum^{m-1}}\int_{t_{k}}^{t_{k+1}} 
 S_{h}(t_{m}-s) \int_{0}^{1}
R
(1-r) dr ds\Vert^{2} \right)^{1/2}, \\
R & := &   P_{h}F''(X(t_{k}))+r(O(s)-O(t_{k}))( O(s)-O(t_{k}),O(s)-O(t_{k})).
\end{eqnarray*}
Using the fact that $O(t_{2})-S(t_{2}-t_{1})O(t_{1})$, $0\leq
t_{1}<t_{2} \leq T$ is independent of $\mathcal{F}_{t_{1}}$, one can
show, as in \cite{Jentzen4}, that 
\begin{eqnarray*}
\left(I_{3}^{21}\right)^{2}&=&\underset{k=0}{\sum^{m-1}}\textbf{E}\Vert
\int_{t_{k}}^{t_{k+1}} S_{h}(t_{m}-s) P_{h}
F'(X(t_{k}))(O(s)-S(s-t_{k})O(t_{k}))ds\Vert^{2}.
\end{eqnarray*}
Therefore  as $S_{h}$ is bounded we have 
\begin{eqnarray*}
\lefteqn{ I_{3}^{21}}\\ 
&\leq& 
\left(\underset{k=0}{\sum^{m-1}}\left(\int_{t_{k}}^{t_{k+1}
      } \left(\textbf{E}\Vert S_{h}(t_{m}-s) P_{h}F'(X(t_{k}
      ))(O(s)-S(s-t_{k})O(t_{k}
      ))\Vert^{2}\right)^{1/2} ds\right)^{2}\right)^{1/2}\\ 
&\leq& C \left(\underset{k=0}{\sum^{m-1}}\left(\int_{t_{k}
      }^{t_{k+1}} \left(\textbf{E}\Vert P_{h}F'(X(t_{k}
      ))(O(s)-S(s-t_{k})O(t_{k}))\Vert^{2}\right)^{1/2}
    ds\right)^{2}\right)^{1/2} .
\end{eqnarray*}
Using Fubini's theorem in integration  with \assref{assumption3},
\assref{assumption4}(a) and \propref{prop1} yields 
\begin{eqnarray*}
\lefteqn{I_{3}^{21} }\\
 &\leq& C \Delta
t^{1/2}\left(\underset{k=0}{\sum^{m-1}}\int_{t_{k}}^{t_{k+1}}
  \textbf{E}\Vert P_{h}F'(X(t_{k}))(O(s)-S(s-t_{k})O(t_{k}))\Vert^{2}
  ds\right)^{1/2}\\ 
&\leq& C\Delta
t^{1/2}\left(\underset{k=0}{\sum^{m-1}}\int_{t_{k}}^{t_{k+1}}
  \textbf{E}\Vert (O(s)-S(s-t_{k})O(t_{k}))\Vert^{2} ds\right)^{1/2}\\ 
&\leq& C\Delta
t^{1/2}\left(\underset{k=0}{\sum^{m-1}}\int_{t_{k}}^{t_{k+1}}
  \left(\left(\textbf{E}\Vert O(s)-O(t_{k})\Vert^{2}\right)^{1/2} 
    +\left(\textbf{E}\Vert
  (S(s-t_{k})-\textbf{I})O(t_{k}))\Vert^{2}\right)^{1/2}\right)^{2} ds\right)^{1/2}\\  
&\leq& C\Delta t^{1/2}\left(\underset{k=0}{\sum^{m-1}}\int_{t_{k}}^{t_{k+1}}
  \left((s-t_{k})^{\theta}+(s-t_{k})^{r/2}\left(\textbf{E}\Vert
      O(t_{k})\Vert_{r}^{2} \right)^{1/2}\right)^{2}
  ds\right)^{1/2}\\ 
&\leq& C\Delta t^{1/2+\theta}. 
\end{eqnarray*}
Let us estimate  $I_{3}^{22}$.
\begin{eqnarray*}
\lefteqn{ I_{3}^{22}}\\
&\leq&\underset{k=0}{\sum^{m-1}}\int_{t_{k}}^{t_{k+1}}\left(\textbf{E}\Vert
  S_{h}(t_{m}-s)P_{h}(-A)^{1/2}(-A)^{-1/2}F'(X(t_{k}))(S(s-t_{k})-
  \textbf{I})O(t_{k}))\Vert^{2}\right)^{1/2}ds \\ 
&\leq&\underset{k=0}{\sum^{m-1}}\int_{t_{k}}^{t_{k+1}} \Vert
S_{h}(t_{m}-s)P_{h}(-A)^{1/2}\Vert \left(\textbf{E}\Vert
  (-A)^{-1/2}F'(X(t_{k}))(S(s-t_{k})-
  \textbf{I})O(t_{k}))\Vert^{2}\right)^{1/2}ds.\\ 
\end{eqnarray*}
Since $ P_{h}(-A)^{1/2}=(-A_{h})^{1/2}$
and $S_{h}$ satisfies the smoothing properties analogous  to
$S(t)$  independently of $h$ (see for example \cite{Stig}), and in particular
\begin{eqnarray*}
 \Vert S_{h}(t_{m})(-A_{h})^{1/2} \Vert=\Vert (-A_{h})^{1/2}
 S_{h}(t_{m})\Vert \leq  C t_{m}^{-1/2},\quad t_{m}=m
 \Delta t >0, 
\end{eqnarray*}
we therefore have 
\begin{eqnarray*}
\lefteqn{I_{3}^{22}}\\ 
&\leq& C \underset{k=0}{\sum^{m-1}}\int_{t_{k}}^{t_{k+1}}
\left( t_{m} - s\right)^{-1/2} \left(\textbf{E}\Vert
  (-A)^{-1/2}F'(X(t_{k}))((S(s-t_{k})-
  \textbf{I})O(t_{k}))\Vert^{2}\right)^{1/2}ds.
\end{eqnarray*}
The usual identification of $H=L^{2}(\Omega)$ to its dual yields
\begin{eqnarray*}
\lefteqn{I_{3}^{22}}\\
& \leq & C \underset{k=0}{\sum^{m-1}}\int_{t_{k}}^{t_{k+1}}
\left( t_m - s \right)^{-1/2} 
\left[\textbf{E}
  \left(\underset{\Vert v \Vert \leq 1}{\sup} \vert \langle v,
    (-A)^{-1/2}F'(X(t_{k}))((S(s-t_{k})- \textbf{I})O(t_{k}))\rangle
    \vert \right)^{2}\right]^{1/2} ds 
\end{eqnarray*}
where $\langle,\rangle =(,)$ and we change the notation merely to
emphasize that $H$ is identified to its dual space. 
The fact that $(-A)^{-1/2}$ is self-adjoint implies that 
$\left((-A)^{-1/2}F'(X)\right)^{*}=F'(X)^{*} (-A)^{-1/2}$.
This combined with the fact  that  $H \subset \mathcal{D}((-A)^{1/2})$ thus
$\mathcal{D}((-A)^{-1/2}) \subset
\left(\mathcal{D}((-A)^{1/2})\right)^{*} \subset H^{*}= H $
continuously and \assref{assumption4}(a) yields 
\begin{eqnarray*}
I_{3}^{22} &\leq& C \underset{k=0}{\sum^{m-1}}\int_{t_{k}}^{t_{k+1}}
\left(t_m- t_{k+1}\right)^{-1/2} \\ && .\left(\textbf{E}
  \left(\underset{\Vert v \Vert \leq 1}{\sup} \vert \langle
    F'(X(t_{k}))^{*}(-A)^{-1/2} v,(S(s-t_{k})-
    \textbf{I})O(t_{k})\rangle \vert\right)^{2}\right)^{1/2} ds\\ 
&\leq& C \underset{k=0}{\sum^{m-1}}\int_{t_{k}}^{t_{k+1}} \left(t_m-
  t_{k+1} \right)^{-1/2}\\ && .\left(\textbf{E} \left(\underset{\Vert v
      \Vert \leq 1}{\sup} \Vert  F'(X(t_{k}))^{*}(-A)^{-1/2}
    v\Vert_{1} \Vert (S(s-t_{k})- \textbf{I})O(t_{k}))
    \Vert_{-1}\right)^{2}\right)^{1/2} ds\\ 
&\leq& C \underset{k=0}{\sum^{m-1}}\int_{t_{k}}^{t_{k+1}}
\left(t_m-t_{k+1}\right)^{-1/2}\\ &&.\left(\textbf{E} \left( 1+\Vert 
    X(t_{k})\Vert_{1}\right)^{2}\|\left(S(s-t_{k})-\textbf{I})O(t_{k})\Vert_{-1}\right)^{2}\right)^{1/2}ds\\   
&\leq& C \underset{k=0}{\sum^{m-1}}\int_{t_{k}}^{t_{k+1}} 
\left(t_m-t_{k+1}\right)^{-1/2}\\ &&.\left(\textbf{E} \left( 1+\Vert
    X(t_{k})\Vert_{1}\right)^{4}\right)^{1/4}\left(\textbf{E}\left(\Vert S(s-t_{k})-\textbf{I})O(t_{k})\Vert_{-1}\right)^{4}\right)^{1/4}ds\\ 
&\leq& C \underset{k=0}{\sum^{m-1}}\left( t_m- t_{k+1}
\right)^{-1/2}\\ &&.\left(1+\left(\textbf{E} \Vert 
    X(t_{k})\Vert_{1}^{4}\right)^{1/4}\right)\int_{t_{k}}^{t_{k+1}}
\left(\textbf{E}\left(\|S(s-t_{k})-\textbf{I})O(t_{k})\Vert_{-1}\right)^{4}\right)^{1/4}ds\\ 
&\leq& C \underset{k=0}{\sum^{m-1}}\left(t_m- t_{k+1}\right)^{-1/2}\\&&.\int_{t_{k}}^{t_{k+1}} \Vert
(-A)^{-(r/2+1/2)}\left(S(s-t_{k})-\textbf{I}\right)\Vert
\left(\textbf{E}\Vert O(t_{k})\Vert_{r}^{4}\right)^{1/4}ds\\ 
&\leq& C
\underset{k=0}{\sum^{m-1}}\left(t_m-t_{k+1}\right)^{-1/2}\int_{t_{k}}^{t_{k+1}}
\Vert 
(-A)^{1/2-r/2}(-A)^{-1}(S(s-t_{k})-\textbf{I})\Vert ds.
\end{eqnarray*}
Using \propref{prop1} and the fact that $(-A)^{1/2-r/2}$ is
bounded as $r=1,2$ yields 
\begin{eqnarray*}
I_{3}^{22} &\leq& C \underset{k=0}{\sum^{m-1}}\left(t_m-t_{k+1}\right)^{-1/2}\int_{t_{k}}^{t_{k+1}}\left(s-t_{k}\right)ds\\
&=& C \Delta t^{3/2} \underset{k=0}{\sum^{m-1}}\left(m- k-1\right)^{-1/2}.
\end{eqnarray*}
We can bound the sum above by $2M^{1/2}$,
  therefore we have 
\begin{eqnarray*}
 I_{3}^{21}+ I_{3}^{22} \leq C(\Delta t +\Delta t^{1/2+\theta})\leq
 C(\Delta t^{2\theta}).
\end{eqnarray*}
Let us estimate  $I_{3}^{23}$. Using the fact that 
$S_{h}$ is bounded  and
\assref{assumption4} yields (with $R=
P_{h}F''(X(t_{k})+r(O(s)-O(t_{k})))(O(s)-O(t_{k}),O(s)-O(t_{k}))$)
\begin{eqnarray*}
 I_{3}^{23}&\leq &
 \underset{k=0}{\sum^{m-1}}\int_{t_{k}}^{t_{k+1}}\Vert S_{h}(t_{m}-s)\Vert \int_{0}^{1} \left(\textbf{E}\Vert
R
\Vert^{2} \right)^{1/2}drds\\ 
&\leq & C \underset{k=0}{\sum^{m-1}}\int_{t_{k}}^{t_{k+1}}
\int_{0}^{1} \left (\textbf{E}\Vert
  O(s)-O(t_{k})\Vert_{\mathcal{V}}^{4}\right)^{1/2} dr ds\\ 
&\leq & C \underset{k=0}{\sum^{m-1}}\int_{t_{k}}^{t_{k+1}}
\left(\left (\textbf{E}\Vert
    O(s)-O(t_{k})\Vert_{\mathcal{V}}^{4}\right)^{1/4} \right)^{2} ds\\ 
&\leq & \underset{k=0}{\sum^{m-1}}\int_{t_{k}}^{t_{k+1}} \left(
  s-t_{k}\right)^{2\theta} ds\\ 
&\leq & C (\Delta t)^{2\theta}.
\end{eqnarray*}
Combining $I_{3}^{21}+I_{3}^{22}$ and $I_{3}^{23}$ yields the
following estimate
\begin{eqnarray*}
  \textbf{E}\left(\Vert I_{3}\Vert^{2}\right)^{1/2}\leq  C  (\Delta t^{2\theta})\leq  C  (\Delta t^{\sigma}).
\end{eqnarray*}
Combining the previous estimates for the term $I$ yields for $X_{0}\in
\mathcal{D}(-A)$, 
\begin{eqnarray*}
\left(\textbf{E} \Vert I\Vert^{2}\right)^{1/2}&\leq& C ( h^{2} +\Delta
t^{2 \theta} 
+\underset{k=0}{\sum^{m-1}}\int_{t_{k}}^{t_{k+1}}\left(\textbf{E}\Vert
  X(t_{k})-X_{k}^{h}\Vert^{2}\right)^{1/2})
\end{eqnarray*}
and for $X_{0} \in \mathcal{D}((-A)^{\gamma})$.
\begin{eqnarray*}
\left(\mathbf{E} \Vert I\Vert^{2}\right)^{1/2}&\leq &C
(t_{m}^{-1/2}(h^{2} +\Delta t^{\sigma}) 
+\underset{k=0}{\sum^{m-1}}\int_{t_{k}}^{t_{k+1}}\left(\textbf{E}\Vert
  X(t_{k})-X_{k}^{h}\Vert^{2}\right)^{1/2}).
\end{eqnarray*}
Finally we combine all our estimates on $I$, $II$ and $III$ to get 
$\left(\textbf{E} \Vert I\Vert^{2}\right)^{1/2},\left(\textbf{E} \Vert
  II\Vert^{2}\right)^{1/2}$ and $\left(\textbf{E} \Vert
  III\Vert^{2}\right)^{1/2}$ 
and use the  discrete Gronwall inequality to 
complete the proof.

\subsection{Proof of Theorem \ref{th2} for (\textbf{SETD1}) scheme}
We now prove convergence in $H^{1}(\Omega)$ and 
estimate 
$\left(\mathbf{E}\Vert
  X(t_{m})-X_{m}^{h}\Vert^{2}_{H^1(\Omega)}\right)^{1/2}$.  
For the proof we follow the same steps as in previous section for
\thmref{th1} and we now  estimate \eqref{eq:IIIIII} in the $H^1$ norm.
  
Let  estimate $(\textbf{E}\Vert II\Vert_{H^{1}(\Omega)}^{2})^{1/2}$.
As in the proof of \thmref{th1} in \secref{sec:th1}, 
 using  the regularity of the noise 
$ O(t) \in \mathcal{D}(-A)=H^{2}(\Omega)\cap \HH$, $\forall t\in
[0,T]$ and the property (\ref{eq:feError}) of the projection $P_h$  yields 
\begin{eqnarray*}
  \mathbf{E}\Vert II\Vert_{H^{1}(\Omega)}^{2}&=&\textbf{E} \Vert P_{h}P_{N}(O(t_{m}))-P_{N}(O(t_{m}))\Vert_{H^{1}(\Omega)}^{2}\\
   &\leq& C h^{2}\textbf{E} \Vert (P_{N}(O(t_{m}))\Vert_{H^{2}(\Omega)}\\
    &\leq& C h^{2} \textbf{E} \Vert O(t_{m})\Vert_{H^{2}(\Omega)} \\
&\leq& C h^{2}  \textbf{E} \Vert( -A )\int_{0}^{t_{m}} S(t_{m}-s)dW(s)\Vert^2\;\;\\
&\leq& C h^{2}   \int_{0}^{t_{m}} \Vert (-A)S(t_{m}-s)\Vert_{L_{2}^{0}}^{2} ds\;\;\\
&\leq& C h^{2} \int_{0}^{T} \Vert (-A) Q^{1/2}\Vert_{HS}^{2} ds\;\;\;\\
&\leq& Ch^{2},
\end{eqnarray*}
 thus 
$  (\mathbf{E}\Vert II\Vert_{H^{1}(\Omega)}^{2})^{1/2} \leq C h.$

Using  the regularity of the noise again  and the equivalency  $\Vert.\Vert_{H^{1}(\Omega)}\equiv \Vert(-A)^{1/2}.\Vert $,  we also have 
\begin{eqnarray*}
 \mathbf{E}\Vert III\Vert_{H^{1}(\Omega)}^{2} & =&  \textbf{E} \Vert (\text{I}-P_{N})O(t_{m}) \Vert_{H^{1}(\Omega)}^{2}\\
    & =&  \textbf{E} \Vert (\text{I}-P_{N})(-A)^{-1} (-A)^{1}O(t_{m})\Vert_{H^{1}(\Omega)}^2\\
      & =& \textbf{E} \Vert(-A)^{1/2} (\text{I}-P_{N})(-A)^{-1} (-A)^{1}O(t_{m})\Vert_{H^{1}(\Omega)}^2\\
 &\leq&  \Vert (-A)^{1/2}(\text{I}-P_{N})(-A)^{-1} \Vert^2  \textbf{E} \Vert (-A)O(t_{m}) \Vert^2\\
 &\leq& \Vert (-A)^{1/2}(\text{I}-P_{N})(-A)^{-1}\Vert^2 \textbf{E} \Vert (-A)O(t_{m}) \Vert^2\\
 &\leq&  C \left( \underset { j \in \mathbb{N}^{d}  \backslash \mathcal{I}_{N}} {\inf}  \lambda_{j} \right)^{-1}.
 \end{eqnarray*}
We now estimate the term $I$ from \eqref{eq:IIIIII} in the
$H^1(\Omega)$ norm noting that from \eqref{eq:I1toI5} we have 
$I=I_1+I_2+I_3+I_4$. 
Estimates on $I_1$ follow immediately from  equations (\ref{form4}) and
(\ref{form5}) of \lemref{lemme1}, and then 
for $I_1$, if $X_{0} \in \mathcal{D}((-A)^{(\gamma+1)/2})\subset V $, equation
(\ref{form2}) of Lemma \ref{lemme1}  gives
\begin{eqnarray*}
 (\textbf{E} \Vert I_{1} \Vert_{H^{1}(\Omega)}^{2})^{1/2}\leq C 
 t_{m}^{-1/2}h \left(\textbf{E} \Vert
   X_{0}\Vert_{H^{1}(\Omega)}^{2}\right)^{1/2} 
\end{eqnarray*}
and if $X_{0} \in  \mathcal{D}((-A)) = \HH \cap H^{2}(\Omega)$,
\begin{eqnarray*}
 (\textbf{E} \Vert I_{1} \Vert_{H^{1}(\Omega)}^{2})^{1/2}\leq C h \left(\textbf{E} \Vert X_{0}\Vert^{2}_{H^{2}(\Omega)}\right)^{1/2}.
\end{eqnarray*}
If $F$ satisfies \assref{assumption4} (b), then  
 using the Lipschitz  condition, the triangle inequality, the fact
 that $P_{h}$ is an bounded operator  and $S_{h}$ satisfies
 the smoothing property analogous to $S(t)$  independently of $h$
 \cite{Stig}, i.e.   
\begin{eqnarray*}
 \Vert S_{h}(t)v\Vert_{H^{1}(\Omega)}^{2}\leq C t^{-1/2} \Vert v\Vert
 \qquad v \in V_{h}\quad t>0,
\end{eqnarray*}
we  have
\begin{eqnarray*}
\lefteqn{(\mathbf{E} \Vert I_{2} \Vert_{H^{1}(\Omega)}^{2})^{1/2}}\\ &\leq &
\underset{k=0}{\sum^{m-1}}\int_{t_{k}}^{t_{k+1}} (\textbf{E} \Vert
S_{h}(t_{m}-s)P_{h}(F(X(t_{k}))-F((Z_{k}^{h}+P_{h}P_{N}O(t_{k}))))\Vert_{H^{1}(\Omega)}^{2})^{1/2}ds\\ 
&\leq &  C \underset{k=0}{\sum^{m-1}}\int_{t_{k}}^{t_{k+1}} (t_{m}-s)^{-1/2}(\mathbf{E} \Vert F(X(t_{k}))-F((Z_{k}^{h}+P_{h}P_{N}O(t_{k}))\Vert^{2})^{1/2})ds\\
 &\leq & C \underset{k=0}{\sum^{m-1}}\int_{t_{k}}^{t_{k+1}}(t_{m}-s)^{-1/2}(\mathbf{E}\Vert X(t_{k})-X_{k}^{h}\Vert_{H^{1}(\Omega)}^{2})^{1/2}ds.
\end{eqnarray*}

Once again using  Lipschitz  condition, triangle inequality, smoothing
property of $ S_{h}$, but with Lemma \ref{lemme2} gives 
\begin{eqnarray*}
\lefteqn{(\textbf{E} \Vert I_{3} \Vert_{H^{1}(\Omega)}^{2})^{1/2}}\\
 &\leq &\underset{k=0}{\sum^{m-1}}\int_{t_{k}}^{t_{k+1}} (\textbf{E}  \Vert
 S_{h}(t_{m}-s)P_{h}(F(X(s))-F(X(t_{k}))\Vert_{H^{1}(\Omega)}^{2})^{1/2} )ds\\
&\leq & C \underset{k=0}{\sum^{m-1}}\int_{t_{k}}^{t_{k+1}}
(t_{m}-s)^{-1/2}(\textbf{E}  \Vert F(X(s))-F(X(t_{k}))\Vert)^{1/2}ds\\ 
&\leq & C\underset{k=0}{\sum^{m-1}}\int_{t_{k}}^{t_{k+1}}
(t_{m}-s)^{-1/2}(\textbf{E}  \Vert
X(s)-X(t_{k})\Vert_{H^{1}(\Omega)}^{2})^{1/2}ds\\ 
&\leq&  C \left (\underset{k=0}{\sum^{m-1}} \int_{t_{k}}^{t_{k+1}}
  (t_{m}-s)^{-1/2}(s- t_{k})^{\gamma/2}ds\right)\\ && .\left(\textbf{E}
  \Vert X_{0}\Vert_{\gamma+1}^{2}+\left(\textbf{E} \underset{0\leq
      s\leq T}{\sup} \Vert F(X(s))\Vert_{H^{1}(\Omega)}
  \right)^{2}+1\right)^{1/2}\\ 
&\leq&  C \left( \Delta t^{\gamma/2}
  \underset{k=0}{\sum^{m-1}}\int_{t_{k}}^{t_{k+1}}(t_{m}-s)^{-1/2}\right)\\ && 
.\left(\mathbf{E} \Vert X_{0}\Vert_{\gamma+1}^{2}+\left(\textbf{E} \underset{0\leq s\leq T}{\sup} \Vert F(X(s))\Vert_{H^{1}(\Omega)} \right)^{2}+1\right)^{1/2}.  
\end{eqnarray*}
As in the previous theorem, we use the fact that
$$\underset{k=0}{\sum^{m-1}}\int_{t_{k}}^{t_{k+1}}(t_{m}-s)^{-1/2}\leq 2
\sqrt{ T}.$$

Then if  $X_{0} \in \mathcal{D}((-A)^{(\gamma+1)/2})$ we have finally found
\begin{eqnarray*}
  (\mathbf{E} \Vert I_{3} \Vert_{H^{1}(\Omega)}^{2})^{1/2} \leq C
  (\Delta t)^{\gamma/2}\left(\textbf{E} \Vert
    X_{0}\Vert_{\gamma+1}^{2}+\left(\mathbf{E} \underset{0\leq s\leq
        T}{\sup} \Vert F(X(s))\Vert_{H^{1}(\Omega)}
    \right)^{2}+1\right)^{1/2}. 
\end{eqnarray*}
In  the same way,  if  $X_{0} \in \mathcal{D}(-A)$  we obviously have 
$(\mathbf{E} \Vert I_{3} \Vert_{H^{1}(\Omega)}^{2})^{1/2} \leq C (\Delta t)^{1/2-\epsilon}$
by taking $\gamma = 1-\epsilon$ in Lemma  \ref{lemme2},\;$\epsilon>0$
small enough. 

If $F(X)\in V$ then by \eqref{form4} of \lemref{lemme1} we find 
\begin{eqnarray*}
 (\mathbf{E} \Vert I_{4} \Vert_{H^{1}(\Omega)}^{2})^{1/2} &\leq& \underset{k=0}{\sum^{m-1}}\int_{t_{k}}^{t_{k+1}}  (\mathbf{E} \Vert T_{h}(t_{m}-s) F(X(s))\Vert_{H^{1}(\Omega)}^2)^{1/2} ds\\
 & \leq & C h \left(\underset{k=0}{\sum^{m-1}}\int_{t_{k}}^{t_{k+1}}  (t_{m}-s)^{-1/2}ds \right) \left(\underset{0\leq s\leq T}{\sup} \textbf E \Vert F( X(s)) \Vert_{H^{1}(\Omega)}^{2}\right)^{1/2}\\
& \leq & C h.
\end{eqnarray*}
Combining our estimates, for  $F(X)\in V$ we have  that : if $
X_{0}\in \mathcal{D}((-A)^{(\gamma+1)/2})$ then 
\begin{eqnarray*}
(\mathbf{E} \Vert I\Vert_{H^{1}(\Omega)}^{2})^{1/2}&\leq& C
(t_{m}^{-1/2}\,h +\Delta t^{\gamma/2})
\\ &+&\underset{k=0}{\sum^{m-1}}\int_{t_{k}}^{t_{k+1}}(t_{m}-s)^{-1/2}(
\mathbf{E}\Vert X(t_{k})-X_{k}^{h}\Vert_{H^{1}(\Omega)}^{2})^{1/2})ds. 
\end{eqnarray*}
If $ X_{0}\in \mathcal{D}(-A)$ then
 \begin{eqnarray*}
(\mathbf{E} \Vert I\Vert_{H^{1}(\Omega)}^{2})^{1/2}&\leq& C (h
+\Delta t^{(\frac{1}{2}-\epsilon)} )
\\&+&\underset{k=0}{\sum^{m-1}}\int_{t_{k}}^{t_{k+1}}(t_{m}-s)^{-1/2}
(\textbf{E}\Vert X(t_{k})-X_{k}^{h}\Vert_{H^{1}(\Omega)}^{2})^{1/2})ds.   
\end{eqnarray*}
where $C>0$ depending of the $T$, the initial solution $X_{0}$, the mild solution $X$, the nonlinear function $F$.\\

Combining our estimates $\left(\textbf{E} \Vert
  I\Vert_{H^{1}(\Omega)}^{2}\right)^{1/2},\left(\textbf{E} \Vert
  II\Vert_{H^{1}(\Omega)}^{2}\right)^{1/2}$ and $\left(\textbf{E} \Vert
  III\Vert_{H^{1}(\Omega)}^{2}\right)^{1/2}$ and using the discrete Gronwall lemma 
concludes the proof. 

\subsection{Proofs for the (\textbf{SETD0}) scheme}
Recall that
\begin{eqnarray*}
 Y_{m}^{h}&=& e^{\Delta t A_{h}}\left( Y_{m-1}^{h}+\Delta t P_{h}F(Y_{m-1}^{h})\right)+  P_{h}\int_{t_{m-1}}^{t_{m}} e^{(t_{m}-s)A_{N}}d W^{N}(s)\\
 &=& S_{h}(t_{m})
 P_{h}X_{0}+\underset{k=0}{\sum^{m-1}}\left(\int_{t_{k}}^{t_{k+1}}
   S_{h}(t_{m}-t_{k})P_{h}F(Y_{k}^{h})ds \right.\\ && + \left. P_{h}\int_{t_{k}}^{t_{k+1}}
   S_{N}(t_{m}-s)dW^{N}(s) \right)\\ 
&=& S_{h}(t_{m})
P_{h}X_{0}+ \underset{k=0}{\sum^{m-1}}\left(\int_{t_{k}}^{t_{k+1}}
  S_{h}(t_{m}-t_{k})P_{h}F(Y_{k}^{h})ds\right)+ P_{h}P_{N}O(t_{m})\\ 
&=& Z_{m}^{h} +P_{h}P_{N}O(t_{m}).
\end{eqnarray*}
As in the Theorem \ref{th1} 
we obviously have
\begin{eqnarray}
  \lefteqn{X(t_{m})-Y_{m}^{h}}\nonumber\\
  &=& \overline{X}_{t_{m}} + O(t_{m})-Y_{m}^{h}\nonumber \\
  &=&\overline{X}(t_{m}) + O(t_{m})-\left(Z_{m}^{h}+P_{h}P_{N} O(t_{m})\right)\nonumber \\ 
  &=& \left(\overline{X}(t_{m}) -Z_{m}^{h}\right)+
  \left(P_{N}(O(t_{m}))-P_{h}P_{N}(O(t_{m}))\right)+
  \left(O(t_{m})-P_{N}(O(t_{m}))\right)\nonumber \\
  &=& I +II +III\nonumber. \\ 
\end{eqnarray}
The proofs are therefore as \cite[Theorem 2.6 and Theorem 2.7]{GTambue} 
but with $S_{h,\Delta t}^{m-k}$ replaced by  $S_{h}(t_{m}-t_{k})$ and
using the similar estimates as in the proofs  of Theorem \ref{th1} and
Theorem \ref{th2} for the \SETD scheme. 

\section{Implementation \& numerical results}
\label{sec:sim}
\subsection{Efficient computation of the action of  $\varphi_{i},\; i=0,1$}
The key element in the stochastic exponential schemes is 
computing the matrix exponential functions, the so called  $\varphi_{i}-$
functions.  
It is well known that a standard \Pade approximation for a matrix 
exponential is not an efficient method for large scale 
problems~\cite{SID,HSW,CMCVL}. 

Here we focus on the real fast \Leja points and the Krylov subspace
 techniques to evaluate the action of the exponential matrix function
 $\varphi_{i} (\Dt\, A_{h})$ on a vector $\underbar{v}$, 
instead of computing  the full exponential function 
$\varphi_{i} (\Dt\,A_{h})$ 
 as in a standard \Pade approximation. The details of the real fast \Leja points technique and~\cite{kry,SID,Antoine} for the Krylov subspace technique 
are given in~\cite{LE2,LE1,LE}. We give a brief summary below.
In \cite{Antoine} we have compared the efficiency of the two techniques for deterministic advection-- diffusion--reaction.

\subsubsection{Krylov space subspace technique}

The main idea of the Krylov subspace technique is to  approximate the
action of the exponential matrix function  $\varphi_{i}(\Dt A_{h})$  on a
vector $\underbar{v}$ by projection onto a small Krylov subspace
$K_{m} =\text{span} \left\lbrace
  \underbar{v},A_{h} \underbar{v},\ldots,  A_{h}^{m-1}
  \underbar{v}\right\rbrace $~\cite{SID}. The approximation is formed
using an orthonormal  basis of $\mathbf{V}_{m} =
\left[\underbar{v}_{1} , \underbar{v}_{2},\ldots, \underbar{v}_{m}
\right] $ of the Krylov subspace $K_{m}$ and of its completion
$\mathbf{V}_{m+1}=\left[\mathbf{V}_{m},\underbar{v}_{m+1}\right]$. The
basis is found by Arnoldi iteration 
which uses stabilised Gram-Schmidt to produce a
sequence of vectors that span the Krylov subspace.\\ 
Let $\underbar{e}_{i}^{j}$ be the $i^{\textrm{th}}$ standard basis vector of $\mathbb{R}^{j}$.
 We approximate $\varphi_{i}(\Dt \,A_{h})\underbar{v}$ by
\begin{eqnarray}
  \varphi_{i}(\Dt \,A_{h})\underbar{v}
  &\approx & \Vert\underbar{v} \Vert_{2}\mathbf{V}_{m+1}\varphi_{i} ( \Delta
  t\, \overline{\mathbf{H}}_{m+1})\underbar{e}_{1}^{m+1}
  \label{app}
\end{eqnarray}
with
$$
\overline{\mathbf{H}}_{m+1} = \left( \begin{array}{cc}
    \mathbf{H}_{m}&\b0\\
    0,\cdots,0, h_{m+1,m} & 0
  \end{array}\right) \qquad \text{where} \qquad
\mathbf{H}_{m} = \mathbf{V}_{m}^{T} A_{h}\mathbf{V}_{m}=[h_{i,j}].
$$
The coefficient $h_{m+1,m}$ is recovered in the last iteration of
Arnoldi's iteration \cite{SID,Antoine,kry}.
For a small Krylov subspace (i.e, $m$ is small) a standard \Pade
approximation can be used to compute
$\varphi_i(\Dt\overline{\mathbf{H}}_{m+1})$, but a efficient way 
used in ~\cite{SID} is to recover
$\varphi_i(\Dt\overline{\mathbf{H}}_{m+1})\underbar{e}_{1}^{m+1}$  
directly from the \Pade approximation of the exponential of a matrix
related to $\mathbf{H}_{m}$~\cite{SID}.  
In our implementation we
use the functions  {\tt expv.m} and {\tt phiv.m} of the package
Expokit~\cite{SID}, which used the efficient  technique specified
above. 

\subsubsection{Real fast \Leja points technique}
For a given vector $\underbar{v}$, the real fast \Leja points
approximate $\varphi_{i}(\Dt \,A_{h})\underbar{v} $  
 by $P_{m}(\Dt \,A_{h}) \underbar{v}$, where $P_{m}$ is an
 interpolation polynomial of  degree $m$ of $\varphi_{i}$  
 at the sequence of points $\left\lbrace \xi_{i}
 \right\rbrace_{i=0}^{m} $ called spectral real fast \Leja points.  
These points $\left\lbrace \xi_{i} \right\rbrace_{i=0}^{m}$
 belong to the spectral focal interval  $\left[ \alpha, \beta\right]$
 of the matrix  $\Dt A_{h}$, i.e. the focal interval of the 
smaller ellipse containing all the eigenvalues of  $\Dt
\,A_{h}$. This spectral interval can be estimated by  the well
known  Gershgorin circle theorem~\cite{GC}. In has been shown that as
the degree of the polynomial increases and hence the number of  
\Leja points increases, convergence is achieved~\cite{LE2}, i.e.
\begin{eqnarray}
  \underset {m \rightarrow \infty}{\lim}\Vert\varphi_{i}(\Dt A_{h}
  )\underbar{v}- P_{m}(\Dt \,A_{h})\underbar{v}\Vert_{2}=0, 
\end{eqnarray}
where $\Vert .\Vert_{2}$ is the standard Euclidean norm. For a real
interval $\left[ \alpha, \beta\right]$, a sequence of real fast \Leja
points $\left\lbrace \xi_{i}\right\rbrace_{i=0}^{m}$ 
is defined recursively as follows. Given an initial point $\xi_{0}$,
usually $\xi_{0}=\beta$, the sequence of fast \Leja points is generated by 
\begin{eqnarray}
\label{lejaa}
\underset {k=0}{\prod^{j-1}}\vert \xi_{j}-\xi_{k} \vert =\underset {\xi \in
  \left[\alpha, \beta \right] } {\max} \underset
{k=0}{\prod^{j-1}}\mid \xi-\xi_{k} \mid \quad j=1,2,3,\cdots. 
\end{eqnarray}
We use the Newton's form of the interpolating polynomial $P_{m}$ given
by 
\begin{eqnarray}
\label{approle}
 P_{m}(z)=\varphi_{i}\left[ \xi_{0}\right] + \underset{j=1}{\sum
   ^{m}}\varphi_{i}\left[ \xi_{0},\xi_{1}, \cdots,\xi_{j}\right]
 \underset{k=0}{\prod^{j-1}}\left( z-\xi_{k} \right) 
\end{eqnarray}
where the divided differences $\varphi_1[\bullet]$ are defined recursively by
\begin{eqnarray}
\label{div}
\left\lbrace \begin{array}{l}
    \varphi_{i}\left[ \xi_{j}\right] =\varphi_{i}( \xi_{j})\\
    \newline\\
    \varphi_{i}\left[ \xi_{j},\xi_{j+1},
      \cdots,\xi_{k}\right]:=\dfrac{\varphi_{i}\left[
        \xi_{j+1},\xi_{j+2}, \cdots,\xi_{k}\right]-\varphi_{i}\left[
        \xi_{j},\xi_{j+1}, \cdots,\xi_{k-1}\right]}{\xi_{k}-\xi_{j}}. 
  \end{array}\right.
\end{eqnarray}
An algorithm  to compute the action  the action of the exponential
matrix function $\varphi_{i} (\Dt\,A_{h})$ on a vector $\underbar{v}$
can be found  in \cite{Antoine} where the standard way is used to
computer the divided differences. Due to cancellation errors this
standard way cannot produce accurate divided differences with
magnitude smaller than machine precision. Here we used the efficient
way to computer the divided differences \cite{LE2,McM}.

In \cite{Reichel} it is shown that \Leja points for the interval
$[-2,2]$ assure optimal accuracy, thus  for the spectral focal
interval $\left[ \alpha, \beta\right]$  of the matrix
$\Dt A_{h}$, it is convenient to interpolate, by a change of
variables, the function $\varphi_{i}( c+\gamma \xi) $ of the
independent variable $\xi \in [-2, 2]$   with  $c=(\alpha+\beta)/2$
and $\gamma =(\beta-\alpha)/4$. It can be shown \cite{McM} that the
divided differences of a function $f(c+\gamma \xi) $ of the
independent variable $\xi$  at the  points 
$\left\lbrace \xi_{i}\right\rbrace_{i=0}^{m} \subset [-2, 2] $ are the
first column of the matrix function  $f(\mathbf{L}_{m})$, where
$$
\mathbf{L}_{m} =c \textbf{I}_{m+1}+\gamma\mathbf{\widehat{L}}_{m},\;\;\;
\mathbf{\widehat{L}}_{m}= \left( \begin{array}{cccccc}
    \xi_{0} & & & &&\\
    1& \xi_{1}& & &&\\
    & 1& \ddots&&&\\
    & & \ddots&\ddots&&\\
    & & &\ddots&\ddots\\
    & & &&1& \xi_{m}\\
  \end{array}\right) 
$$
We then conclude that  the divided differences of $\varphi_{i}(
c+\gamma \xi)$ of the independent variable $\xi \in [-2, 2]$ at  the
points $\left\lbrace \xi_{i}\right\rbrace_{i=0}^{m} \subset [-2, 2]$ 
is $\varphi_{i}(\mathbf{L}_{m})e_{1}^{m+1}$ where  $e_{1}^{m+1}$ is
the first standard basis vector of $\mathbb{R}^{m+1}$. Taylor
expansion of order $p$ with scaling and squaring   
is used in \cite{LE2,McM} to compute
$\varphi_{i}(\mathbf{L}_{m})e_{1}^{m+1}$. 
In practice the real fast \Leja points is computed once in the
interval  $[-2, 2]$ and reused at each  time step during the
computation of the divided differences.  
We use the efficient algorithm of Baglama et al.~\cite{LE1} to compute
the real fast \Leja points in $[-2, 2]$. 

\subsubsection{Numerical construction of noise}

We relate the decay of the eigenvalues $q_i$ of $Q$ in \eqref{eq:W}
to the covariance function and discuss
implementation. For concreteness we examine $A$ on
$[0,L_1] \times [0,L_2]$ with Neumann boundary conditions.
For the noise in $H^r$, $r=1,2$ we take the following values for
$\left\lbrace q_{i,j}\right\rbrace_{i+j > 0}$ in the representation 
(\ref{eq:W}) 
\begin{eqnarray}
  \label{noise2}
  q_{i,j}= \Gamma/\left( i+j\right)^{r},\qquad r>0.
\end{eqnarray}
We call noise in $H^{r}$ when the eigenvalues satisfy (\ref{noise2}). 
Consider the covariance operator $Q$ with the following covariance
function (kernel) with strong exponential decay
\cite{shardlow05,GrcaOjlvoSncho}    
\begin{eqnarray*}
  C_{r}((x_{1},y_{1});(x_{2},y_{2}))=\dfrac{\Gamma}{4 b_{1}b_{2}} \exp
  \left(-\dfrac{\pi}{4}\left[\dfrac{\left( x_{2}-x_{1}\right)^{2}
      }{b_{1}^{2}}+ \dfrac{\left( y_{2}-y_{1}\right)^{2}
      }{b_{2}^{2}}\right] \right)
\end{eqnarray*}
where $b_{1},b_{2}$ are  spatial correlation lengths in $x-$ axis and
y- axis respectively and  $\Gamma>0$. This covariance function is frequently
used in geosciences to generated a random permeability (see
\cite{Antoine,WuanT}). 
It is well known that the eigenfunctions
$\{e_{i}^{(1)}e_{j}^{(2)}\}_{i,j\geq 0} $ 
of the operator $A=D \varDelta$ with Neumann boundary conditions are
given by
$$e_{0}^{(l)}=\sqrt{\dfrac{1}{L_{l}}},\quad \lambda_{0}^{(l)}=0,\quad
e_{i}^{(l)}=\sqrt{\dfrac{2}{L_{l}}}\cos(\lambda_{i}^{(l)}x),\quad
\lambda_{i}^{(l)}=\dfrac{i  \pi }{L_{l}}$$ 
with $l \in \left\lbrace 1, 2 \right\rbrace$, $i=1, 2, 3, \cdots$
and corresponding eigenvalues $ \{\lambda_{i,j}\}_{i,j\geq 0} $ given by 
\begin{eqnarray*}
\lambda_{i,j}= (\lambda_{i}^{(1)})^{2}+ (\lambda_{j}^{(2)})^{2}.
\end{eqnarray*}
In order  to put  the noise $W$  in form of the representation
(\ref{eq:W}), let us give the following lemma. 
\begin{lemma}
\label{exp}
 Let  $b$ and $\lambda$ be two real numbers. We have the following statement
\begin{eqnarray*}
 \int_{-\infty}^{+ \infty}\exp \left(-\dfrac{\pi}{4}\left(\frac{ x^{2}
     }{b^{2}}\right) \right) \cos(\lambda x) dx =  2 b\exp\left[
   -\dfrac{1}{\pi}\left(\lambda b\right)^{2} \right]. 
\end{eqnarray*}
\end{lemma}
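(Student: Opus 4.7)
The plan is to recognize this as a Fourier cosine transform of a Gaussian, which is a classical computation. Setting $a = \pi/(4b^2)$ (assuming $b>0$, which is natural since $b$ represents a correlation length), we reduce the claim to showing that
\[
 I(\lambda) := \int_{-\infty}^{+\infty} e^{-a x^{2}} \cos(\lambda x)\, dx = \sqrt{\pi/a}\,\, e^{-\lambda^{2}/(4a)},
\]
after which the substitution $\sqrt{\pi/a}=2b$ and $\lambda^{2}/(4a)=(\lambda b)^{2}/\pi$ gives the stated formula.

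First I would write $\cos(\lambda x)=\operatorname{Re}(e^{i\lambda x})$, so that
\[
 I(\lambda) = \operatorname{Re}\int_{-\infty}^{+\infty} e^{-a x^{2}+ i\lambda x}\, dx.
\]
Completing the square in the exponent, $-ax^{2}+i\lambda x = -a\bigl(x - i\lambda/(2a)\bigr)^{2} - \lambda^{2}/(4a)$, reduces the problem to evaluating the shifted Gaussian integral $\int_{-\infty}^{+\infty} e^{-a(x - i\lambda/(2a))^{2}}\, dx$. This is the only non-routine step: I would justify by a standard rectangular contour argument that this integral equals the real Gaussian integral $\int_{-\infty}^{+\infty} e^{-a x^{2}}\,dx = \sqrt{\pi/a}$, using the fact that $e^{-az^{2}}$ is entire and decays rapidly as $|\operatorname{Re}(z)|\to\infty$ uniformly on horizontal strips, so the contributions from the vertical sides of the rectangle vanish.

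If one prefers to avoid contour integration entirely, there is an equally clean route via differentiation under the integral sign: justify $I'(\lambda) = -\int x\, e^{-ax^{2}} \sin(\lambda x)\, dx$, integrate by parts with $u=\sin(\lambda x)$, $dv = -x e^{-ax^{2}}\,dx$, $v = e^{-ax^{2}}/(2a)$, and obtain the first-order linear ODE
\[
 I'(\lambda) = -\frac{\lambda}{2a}\, I(\lambda), \qquad I(0) = \sqrt{\pi/a},
\]
whose solution is exactly $\sqrt{\pi/a}\,e^{-\lambda^{2}/(4a)}$. The initial condition is the classical Gaussian integral, and the differentiation under the integral sign is justified by the uniform dominating bound $|x e^{-ax^{2}}|\in L^{1}(\mathbb{R})$.

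The main (and only) obstacle is rigorously justifying either the contour shift or the interchange of derivative and integral; both are standard and involve no subtlety beyond the uniform decay of Gaussian integrands. Substituting $a=\pi/(4b^{2})$ at the end yields the stated identity $2b\exp[-(\lambda b)^{2}/\pi]$.
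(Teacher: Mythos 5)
Your primary argument --- writing the cosine in terms of complex exponentials, completing the square, and justifying the contour shift back to the real axis via a rectangle whose vertical sides contribute nothing --- is exactly the proof given in the paper, which uses $\cos(\lambda x)=\tfrac12(e^{i\lambda x}+e^{-i\lambda x})$ and the same rectangular contour with vertices $-a, a, a+id, -a+id$. The proposal is correct; the alternative ODE route you sketch is a valid bonus but not needed.
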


\begin{proof} Note that
\begin{eqnarray*}
\lefteqn{\int_{-\infty}^{+ \infty}\exp \left(-\dfrac{\pi}{4}\left(\frac{
      x^{2}}{b^{2}}\right) \right) \cos(\lambda x)dx}\\
 &=&
\dfrac{1}{2}\int_{-\infty}^{+ \infty}\left(  e^{ -\left(\dfrac{
        \pi\,x^{2} }{4\,b^{2}}-i\lambda x  \right)}+  e^{ -\left(
      \dfrac{ \pi\,x^{2} }{4\,b^{2}}+i\lambda x  
 \right)}\right)dx \\
&=&\dfrac{1}{2} e^{-\dfrac{\left( \lambda b\right)^{2}
  }{\pi}}\int_{-\infty}^{+ \infty}\left(  e^{ -\left(
      \dfrac{\sqrt{\pi}}{2b}\,x-i \dfrac{\lambda b}{\sqrt{\pi}}
    \right)^{2}}+ e^{ -\left( \dfrac{\sqrt{\pi}}{2b}\,x+i
      \dfrac{\lambda b}{\sqrt{\pi}}  \right)^{2}}\right)dx. \\ 
\end{eqnarray*}
Since for any contour $(C)$ in complex plane  we have
$\oint_{C}\exp(-z^{2})dz=0$, taking $(C)$ to be a rectangle with
vertexes in complex plan $ -a, a, a+id,-a+ id$ yields 
\begin{eqnarray*}
 \oint_{C}\exp(-z^{2})dz=\int_{-a}^{a}\,e^{-x^{2}}dx +i \int_{0}^{d}
 e^{-(a+iy)^{2}}dy-\int_{-a}^{a}e^{-(x+id)^{2}}dx-i
 \int_{0}^{d}e^{-(-a+iy)^{2}}dy.   
\end{eqnarray*}
 Since
\begin{eqnarray*}
 \vert \int_{0}^{d} e^{-(\pm a+iy)^{2}}dy\vert = \vert \int_{0}^{d} e^{(- a^{2}\pm i2a y+y^{2})}dy\vert\leq e^{-a^{2}}\int_{0}^{d} e^{y^{2}}dy \rightarrow 0 \quad \text{when}\quad a \rightarrow + \infty
\end{eqnarray*}
then when $a \rightarrow + \infty$, we have
\begin{eqnarray*}
 \int_{-\infty}^{\infty}\,e^{-x^{2}}dx=\int_{-\infty}^{\infty}e^{-(x+id)^{2}}dx \quad \quad \text{for   all } \quad d\in \mathbb{R}.
\end{eqnarray*}
Using previous results allows us to have finally 
\begin{eqnarray*}
\int_{-\infty}^{+ \infty}\exp \left(-\dfrac{\pi}{4}\left(\dfrac{ x^{2} }{b^{2}}\right) \right) \cos(\lambda x)dx &=&e^{-\dfrac{\left( \lambda b\right)^{2} }{\pi}}\int_{-\infty}^{+ \infty}e^{ -\left( \dfrac{\sqrt{\pi}}{2b}\,x\right)^{2}} dx\\
&=& 2 b e^{-\dfrac{\left( \lambda b\right)^{2} }{\pi}}
\end{eqnarray*}
by using the fact that $\int_{-\infty}^{+ \infty}e^{-x^{2}}dx =\sqrt{\pi}$.\\
\end{proof}
Recall \cite{DaPZ} that the covariance operator $Q$ may be defined for $f \in
L^{2}(\Omega)$ by  
\begin{eqnarray*}
 Qf(x) = \int_{\Omega}C_{r}((x,y) f(y)dy.
\end{eqnarray*}
Assume that the eigenfunctions of the operator $Q$ are the same as the
eigenfunctions of $-A$, for $b_{i}\ll L_{i}$ and using the strong
exponential decay of $C_{r}$ we have : 
\begin{eqnarray*}
\label{eig}
   \lefteqn{4 b_{1} b_{2} \int_{0}^{L_{1}} \int_{0}^{L_{2}}
  C_{r}((x_{1},y_{1});(x_{2},y_{2})) \cos(\lambda_{i}^{(1)} x_{2}
  )\cos(\lambda_{j}^{(2)} y_{2} )dy_{2} dx_{2}}\\ 
&=&\Gamma\;\int_{0}^{L_{1}} \exp
\left(-\dfrac{\pi}{4}\left(\dfrac{\left( x_{2}-x_{1}\right)^{2}
    }{b_{1}^{2}}\right) \right) \cos(\lambda_{i}^{(1)} x_{2} )dx_{2}
\\ &&\times \int_{0}^{L_{2}} \exp \left(-\dfrac{\pi}{4}\left[\dfrac{\left(
        y_{2}-y_{1}\right)^{2} }{b_{2}^{2}}\right] \right)
\cos(\lambda_{j}^{(2)} y_{2} ) dy_{2}\\ 
&=& \Gamma\; \int_{-x_{1}}^{L_{1}-x_{1}}\exp
\left(-\dfrac{\pi}{4}\left(\dfrac{ x^{2} }{b_{1}^{2}}\right) \right)
\cos(\lambda_{i}^{(1)} (x+x_{1} ))dx \\ && \times \int_{-y_{1}}^{L_{2}-y_{1}}
\exp \left(-\dfrac{\pi}{4}\left(\dfrac{ x^{2} }{b_{2}^{2}}\right)
\right) \cos(\lambda_{j}^{(2)}(x+y_{1})dx)\\ 
&\approx& \Gamma \;\int_{-\infty}^{+ \infty}\exp
\left(-\dfrac{\pi}{4}\left(\dfrac{ x^{2} }{b_{1}^{2}}\right) \right)
\cos(\lambda_{i}^{(1)} (x+x_{1} ))dx \\ && \times \int_{-\infty}^{+ \infty}
\exp \left(-\dfrac{\pi}{4}\left(\dfrac{ x^{2} }{b_{2}^{2}}\right)
\right) \cos(\lambda_{j}^{(2)}(x+y_{1}))dx\\ 
&=& 4 b_{1} b_{2} \cos(\lambda_{i}^{(1)}\,x_{1})
\cos(\lambda_{j}^{(2)}\,y_{1})\,\Gamma\,\exp\left(
  -\dfrac{1}{\pi}\left((\lambda_{i}^{(1)}b_{1})^{2}+(\lambda_{j}^{(2)}b_{2})^{2}\right) \right).   
\end{eqnarray*}
It is important to notice that in the previous expressions we have
used the fact that 
\begin{eqnarray*}
  \int_{-\infty}^{+ \infty}\exp \left(-\dfrac{\pi}{4}\left(\dfrac{
        x^{2} }{b_{i}^{2}}\right) \right) \cos(\lambda_{j}^{(i)}x)dx
  &= & 2 b_{i}\exp\left[ -\dfrac{1}{\pi}\left((\lambda_{j}^{(i)}
      b_{i})^{2}\right) \right] \;\;\;\; i \in \left\lbrace
    1,2\right\rbrace  
\end{eqnarray*}
by \lemref{exp} and
\begin{eqnarray*}
\int_{-\infty}^{+ \infty}\exp \left(-\dfrac{\pi}{4}\left(\dfrac{ x^{2} }{b_{i}^{2}}\right) \right) \sin(\lambda_{j}^{(i)}x)dx &=&0 
\end{eqnarray*}
because the integrand is an odd function.
Then the corresponding values of $\left\lbrace q_{i,j}\right\rbrace_{i+j > 0}$ in the representation (\ref{eq:W}) is given by
\begin{eqnarray*}
 q_{i,j}= \Gamma \exp\left[ -\dfrac{1}{2 \pi}\left((\lambda_{i}^{(1)}b_{1})^{2}+(\lambda_{j}^{(2)}b_{2})^{2}\right) \right].
\end{eqnarray*}

During our simulation, the process
$$
 \widehat{O}_{k}=\int_{t_{k}}^{t_{k+1}} e^{(t_{k+1}-\tau)A_{N}}d W^{N}(\tau)
$$
is  generated  in Fourier space as in \cite{Jentzen4} by  applying
the Ito isometry in each mode, which yields 
\begin{eqnarray}
 ( e_{i},\widehat{O}_{k}))=  e^{-\lambda_{i} \Delta
   t}\left(\dfrac{q_{i}}{2 \lambda_{i}} \left(1-e^{-2
       \lambda_{i}\Delta t} \right)\right)^{1/2}R_{i,k},
\label{eq:noiseupdate}
\end{eqnarray}
$i\in \mathcal{I}_{N}=\left\lbrace 1,2,3,...,N\right\rbrace^{2}$,
 $k=0,1,2..., M-1$ and $R_{i,k}$ are independent, standard normally
distributed random variables with means $0$ and variance $1$.
For efficient computations we use the inverse fast Fourier transform
or some variant : eg for Neumann boundary conditions we use the
inverse discrete cosine transform. 

The exponential functions in the schemes  (\textbf{SETD0}) and
(\textbf{SETD1}) are computed either using the real \Leja points technique 
or the Krylov subspace technique. 
For noise with exponential correlations, $b_i>0$, $i=1,2$
we have  $\Vert (-A)^{r/2}Q^{1/2}\Vert_{HS}<\infty$, $r=1,2$. 
Furthermore \assref{assumption3} is obviously satisfied
with $\mathcal{V}=H=L^{2}(\Omega)$ and $\theta =1/2$.
We therefore expect the higher temporal order, i.e. close to $1$ 
with initial data $X_{0}=0$ when
$F$ is taken to be linear.   
We need to consider the projection $P_{h}$ of the noise onto the
computational grid. There are two cases.
When the vertices of our finite element mesh matches the evaluation
points of the noise term $O(t)$ the projection $P_{h}$ is trivial. 
We also used the centered finite volume \cite{FV} discretization. Here 
$P_h$ is trivial when the center of every control volume is an
evaluation point $O(t)$. 
Of course in general the evaluations points of the noise  term $O(t)$
do not necessarily need to match the finite volume or finite element
grids. In this case the noise needs to be regular for a good
projection (see  assumption \ref{assumption3}). 

In our simulations we examined both a finite element and a finite
volume discretization in space and take as a domain $\Omega
=[0,1]\times [0,1]$.  
For time discretizations we compare the schemes here with an
semi-implicit Euler Maruyama method (denoted 'Implicitfem') and 
the semi-implicit Euler Maruyama of \cite{GTambue} that uses linear
functionals of the noise as in \eqref{eq:noiseupdate}.
We denote by 'Implicitfem' the graph for  standard semi-implicit with
finite element method for space discretization with exponential
correlation function,'SETD1fem' and  'SETD0fem' the graph for schemes
(\textbf{SETD1}) and (\textbf{SETD0}) with finite element method for
space discretization with exponential correlation function,
'Implicitfvm$r$', $r=1,2$ the graph for standard implicit with finite
volume method for space discretization with $H^{r}$ noise, SETD1fem$r$
and SETD0fem$r$, $r=1,2$  the graph for the schemes (\textbf{SETD1})
and (\textbf{SETD0}) with finite  element method for space
discretization  with $H^{r}$ noise, SETD1fvm$r$ and SETD0fvm$r$,
$r=1,2$ the graph for the schemes (\textbf{SETD1}) and
(\textbf{SETD0}) with finite volume method for space discretization
with $H^{r}$ noise, ModifiedImplicitfvm$r$, $r=1,2$ graph for the
modified implicit scheme constructed in \cite{GTambue}  
with finite volume method for space discretization with $H^{r}$
noise.


\subsubsection{A linear reaction--diffusion equation}

As a simple example consider the reaction diffusion equation in the
time interval $[0,T]$ with diffusion coefficient $ D>0$  
\begin{eqnarray}
 dX=(D \varDelta X - \lambda X)dt+ dW \qquad X(0)=X_{0},
\label{eq:linear}
\end{eqnarray}
with homogeneous Neumann boundary conditions in $\Omega$. 
Here $\lambda$ is a constant related to the reaction and in the
notation of \eqref{adr} $F(u)=-\lambda u$ and obviously
satisfies condition (a) of Assumption (\ref{assumption4}).
For this linear equation we can construct an exact solution up to any spectral projection error.
We compute the exponential functions $\varphi_{i}$ with the  real fast \Leja points  technique. The absolute tolerance used is $10^{-6}$.

We start by examining in \figref{figII} convergence with $H^{r}$
noise, $r=1,2$. The figure compares the finite element discretization
for schemes (\textbf{SETD0}), (\textbf{SETD1}), the standard implicit
Euler--Maruyama scheme and the modified implicit scheme introduced in
\cite{GTambue} which also uses a linear functional of the noise. 
We observe that schemes with finite element and finite volume space
discretization have the same order of accuracy. 
In \figref{figII} (a) the noise is in $H^{1}$ and the diffusion coefficient is $D=1$. We clearly see improved
accuracy of the schemes that use the linear functions of the noise :
namely (\textbf{SETD0}), (\textbf{SETD1}) and modified implicit over 
the standard semi-implicit method. Not only is there an improved
constant but the temporal order is higher. Numerically we find from
\figref{figII} an order of $0.97 $ for (\textbf{SETD0}),
(\textbf{SETD1}) and  for the modified semi-implicit Euler-Maruyama
scheme, which are in excellent agreement with the theoretical value of
$1$ from the theory, the order of convergence of the standard implicit scheme is $0.30$.
We also see that the scheme
(\textbf{SETD0}) and the modified implicit scheme have approximately
the same order of accuracy and that (\textbf{SETD1}) is slightly more
accurate comparing the schemes (\textbf{SETD0}) and the modified
semi-implicit Euler-Maruyama.
In \figref{figII} (b) the noise is  $H^{2}$ and diffusion coefficient $D=1/100$. 
The error here is dominated by space discretization error, as a consequence to see the convergence 
with need small $\Delta x$ and $\Delta y$.
 We observe again
that the schemes using the linear functionals 
are more accurate. 
 We also see from both \figref{figII} (a) and (b) that
(\textbf{SETD1}) is slightly more accurate than (\textbf{SETD0}) by
some constant. The temporal order of convergence for schemes using linear functional 
of the noise is $0.97$ and $0.5$ for standard semi-implicit scheme.
From \figref{figII} (a) to \figref{figII} (b) we observe that as the
noise is regular the gap between errors in different schemes become small. 
\begin{figure}[!th]
\begin{center}
\includegraphics[width=0.48\textwidth]{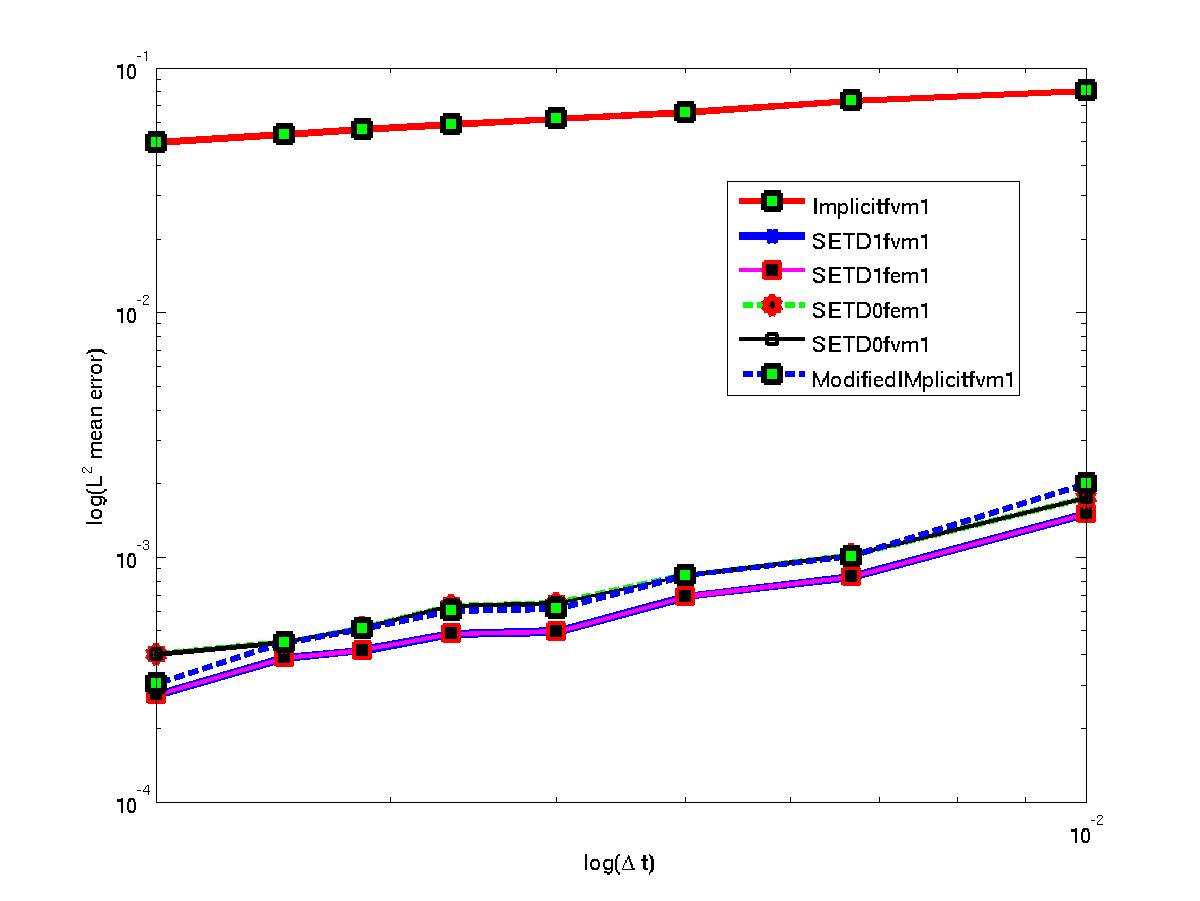}
\includegraphics[width=0.48\textwidth]{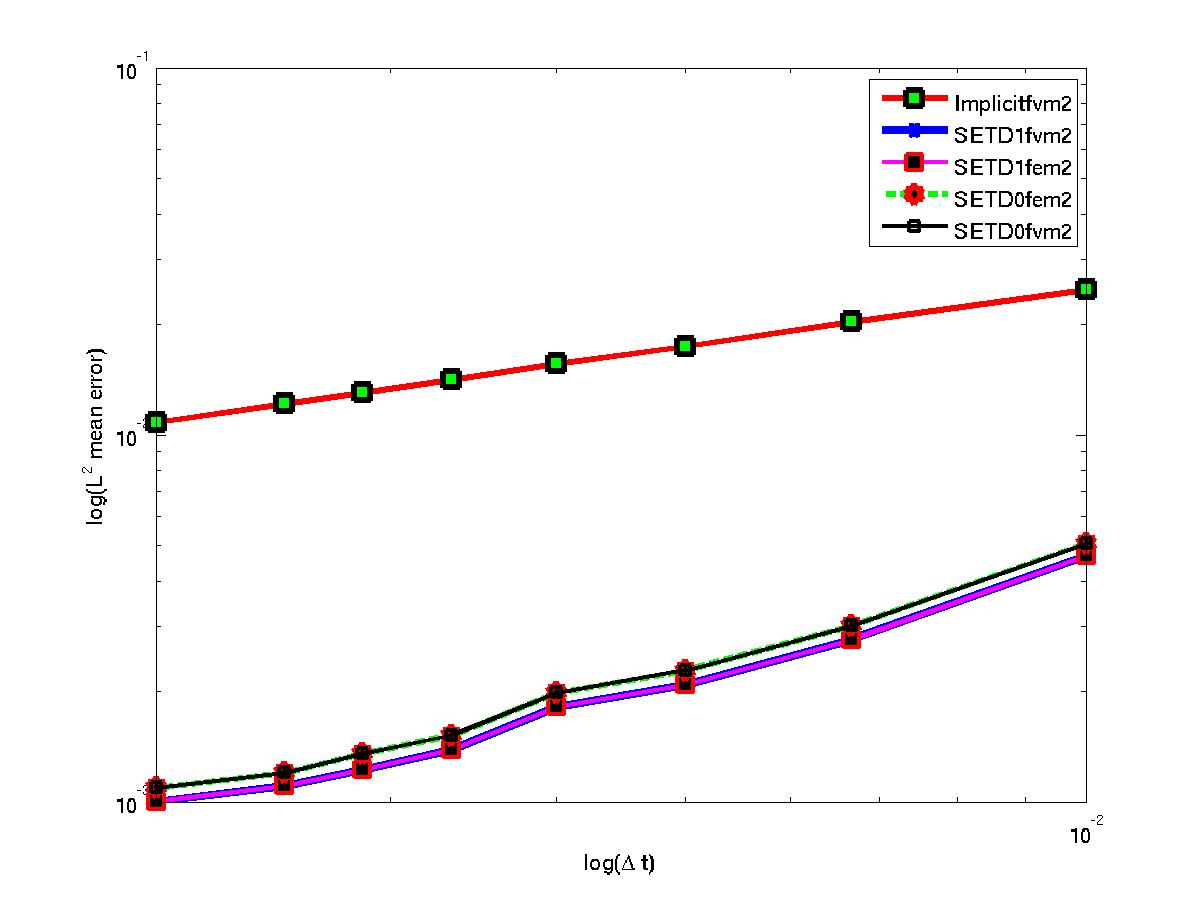}
(a) \hspace{0.48\textwidth} (b)
\end{center}
\caption{Convergence in the root mean square $L^{2}$ norm at $T=1$ as a
  function of $\Dt$ with $H^r$, $r=1,2$. (a) Shows convergence for
  finite element and finite volume discretizations with $r= 1$, $D=1$,
  $\lambda= 1$, $\Gamma =1$ and $\Delta x=\Delta y=1/100$. In (b)
  we show convergence for finite element and finite volume
  discretizations with $r= 2$, $D=1/100$, $\lambda= 1$, $\Gamma =1$,
  $\Delta x =\Delta y =1/400$ (small to have a good look of convergence). 
  The initial data is $X_{0}=0$ and the simulation is for
  \eqref{eq:linear} with 20 realizations.} 
\label{figII} 
\end{figure}

In \figref{figI} we show results with the exponential covariance 
function for the noise, as the noise is certainly in $H^r,\;r= 1
\;\text{or} \;2$ we expect a rate of convergence close to one. The
figure compares the finite element discretization  for schemes
(\textbf{SETD0}) and (\textbf{SETD1})  against the standard implicit
scheme. The temporal order of convergence of  the schemes
(\textbf{SETD0}) is $0.80$ and (\textbf{SETD1}) is $1.05$ and  $0.80$ for  standard implicit scheme.
We see the improved accuracy in the schemes
(\textbf{SETD0}) and (\textbf{SETD1}) comparing to the standard
implicit.  We also see the better accuracy of the scheme (\textbf{SETD1})
compared to (\textbf{SETD0}).

\begin{figure}[!th]
  \begin{center}
    \includegraphics[width=0.48\textwidth]{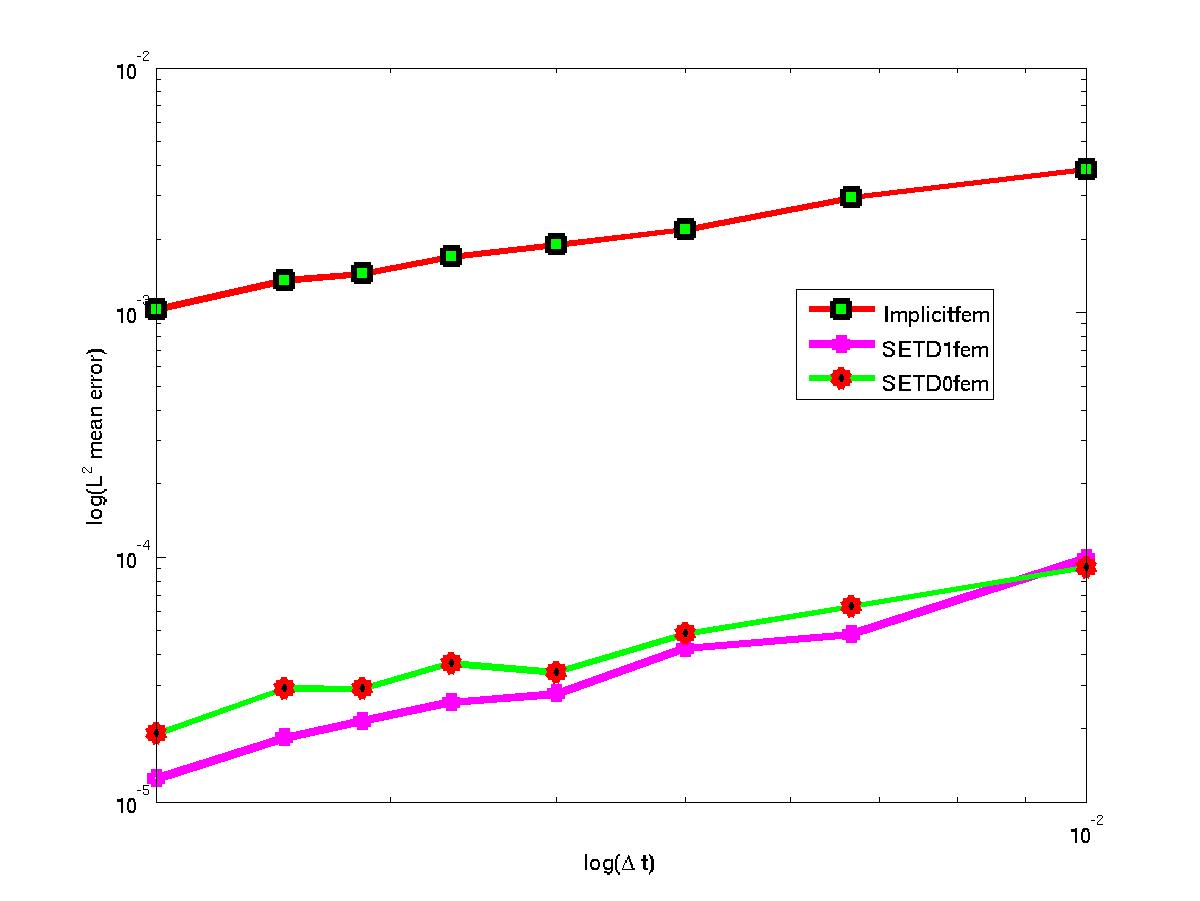}
  \end{center}
  \caption{Convergence in the root mean square $L^{2}$ norm at $T=1$ as a
    function of $\Dt$ with exponential covariance function with $D=1$,
    $\lambda= 0.5$, $\Gamma =1$ and regular mesh coming from  
    rectangular grid with size $\Delta x=\Delta y=1/100$. The simulation
    is for \eqref{eq:linear} with correlation lengths $ b_{1}=b_{2}=0.2$
    and 10 realizations. Initial data is given by $X_{0}=0$.} 
  \label{figI} 
\end{figure}

\subsection{Stochastic advection diffusion reaction}
As a more challenging example we consider the stochastic advection
diffusion reaction SPDE
\begin{eqnarray}
\label{advection}
dX=\left(D \varDelta X -  \nabla \cdot (\textbf{q}
  X)-\frac{X}{X+1}\right)dt+ dW,
\end{eqnarray}
with mixed Neumann-Dirichlet boundary conditions.
and constant velocity 
$\textbf{q}=(1,0)$ for homogeneous medium. 
In terms of equation \eqref{adr} the nonlinear term $F$ is given by
\begin{eqnarray}
 F(u)=-  \nabla \cdot (\textbf{q} u)-\frac{u}{(u+1)}, \quad u \in
 \mathbb{R}^{+} 
\end{eqnarray}
and  clearly satisfies \assref{assumption4} (b).
For heterogeneous medium we used three parallel high 
 permeability streaks. This could represent for example a highly idealized fracture pattern.
  We obtain the Darcy velocity field $\textbf{q}$  by  solving  the system
\begin{eqnarray}
\label{darcy}
 \left\lbrace \begin{array}{l}
 \nabla \cdot\underline{\mathbf{q}} =0\\
\b q=-\dfrac{k(\b x)}{\mu} \nabla p,
\end{array}\right.
\end{eqnarray}
with  Dirichlet boundary conditions 
$\Gamma_{D}^{1}=\left\lbrace 0 ,1 \right\rbrace \times \left[
  0,1\right] \text{and Neumann boundary} 
 \quad \Gamma_{N}^{1}=\left( 0,1\right)\times\left\lbrace 0
   ,1\right\rbrace $ such that 
\begin{eqnarray*}
 p&=&\left\lbrace \begin{array}{l}
1 \quad \text{in}\quad \left\lbrace 0 \right\rbrace \times\left[ 0,1\right]\\
0 \quad \text{in}\quad \left\lbrace L_{1} \right\rbrace \times\left[ 0,1\right]
 \end{array}\right. 
\end{eqnarray*}
and 
$$
- k \,\nabla p (x,t)\,\cdot\b n =0\quad \text{in}\quad \Gamma_{N}^{1}
$$
 where  $p$ is the pressure, $\mu$ is dynamical viscosity and $k$ the
 permeability of the porous medium.  
 We have assumed that rock and fluids are incompressible and sources or
 sinks are absent, thus the equation 
\begin{eqnarray}
  \label{pr}
  \nabla \cdot\underline{\mathbf{q}}=\nabla \cdot 
  \left[\dfrac{k(\b x)}{\mu} \nabla p\right]=0 
\end{eqnarray}
comes from mass conservation. 

To deal with high  P\'{e}clet flows we discretize in space using
finite volumes.  
Simulations are in  $ L^2(\Omega)$ since  the discrete   
$L^2(\Omega)$ norm is easy to implement for all types of boundary conditions. 
We can write the semi-discrete finite volume method as 
\begin{eqnarray}
 dX^{h}=(A_{h}X^{h}+P_{h}F(X^{h})+b(X^{h})) +P_{h}P_N dW,
\end{eqnarray}
where here $A_{h}$ is the space discretization of $D \varDelta $ using
only  homogeneous Neumann boundary conditions and $b(X^{h})$
comes from the approximation of diffusion flux at the Dirichlet
boundary condition size.

We compute the exponential functions $\varphi_{i}$ with Krylov
subspace technique with dimension $m=6$ and the absolute tolerance $10^{-6}$ and the real fast \Leja points  technique for $\varphi_{0}$. 
In  \figref{FIG022a} we
shows the convergence of schemes (\textbf{SETD0}), (\textbf{SETD1})
and standard implicit scheme with $H^{2}$ noise for homogeneous medium, the 'true solution'
is the numerical scheme with smaller time step 
$\Dt=1/15360$. All the schemes have $1/4$ for temporal order of
convergence. We can also observe the  accuracy of the scheme
(\textbf{SETD1}) and (\textbf{SETD0}) comparing to  and standard implicit
scheme in \figref{FIG022a}. 
In  \figref{FIG024a} we
shows the convergence of schemes (\textbf{SETD0}), (\textbf{SETD1})
with $H^{2}$ noise for heterogeneous medium. The two schemes have the
same error. The corresponding mean of CPUtime for the scheme
(\textbf{SETD0}) is given in \figref{FIG024d}. We observe a slightly
efficiency gain using the \Leja points technique  
compared to the Krylov subspace technique during the evaluation of  the action
of $\varphi_{0}$.

In conclusion we obtained superior convergence for the stochastic 
exponential integrators using linear functionals of the
noise with a finite element discretization. Furthermore we have shown
that these schemes that require the exponential of a non-diagonal
matrix can be efficiently implemented for finite element and finite volume
discretizations of realistic porous media flow with stochastic forcing.

\begin{figure}[!th]
  \subfigure[]{
    \label{FIG022a}
    \includegraphics[width=0.48\textwidth]{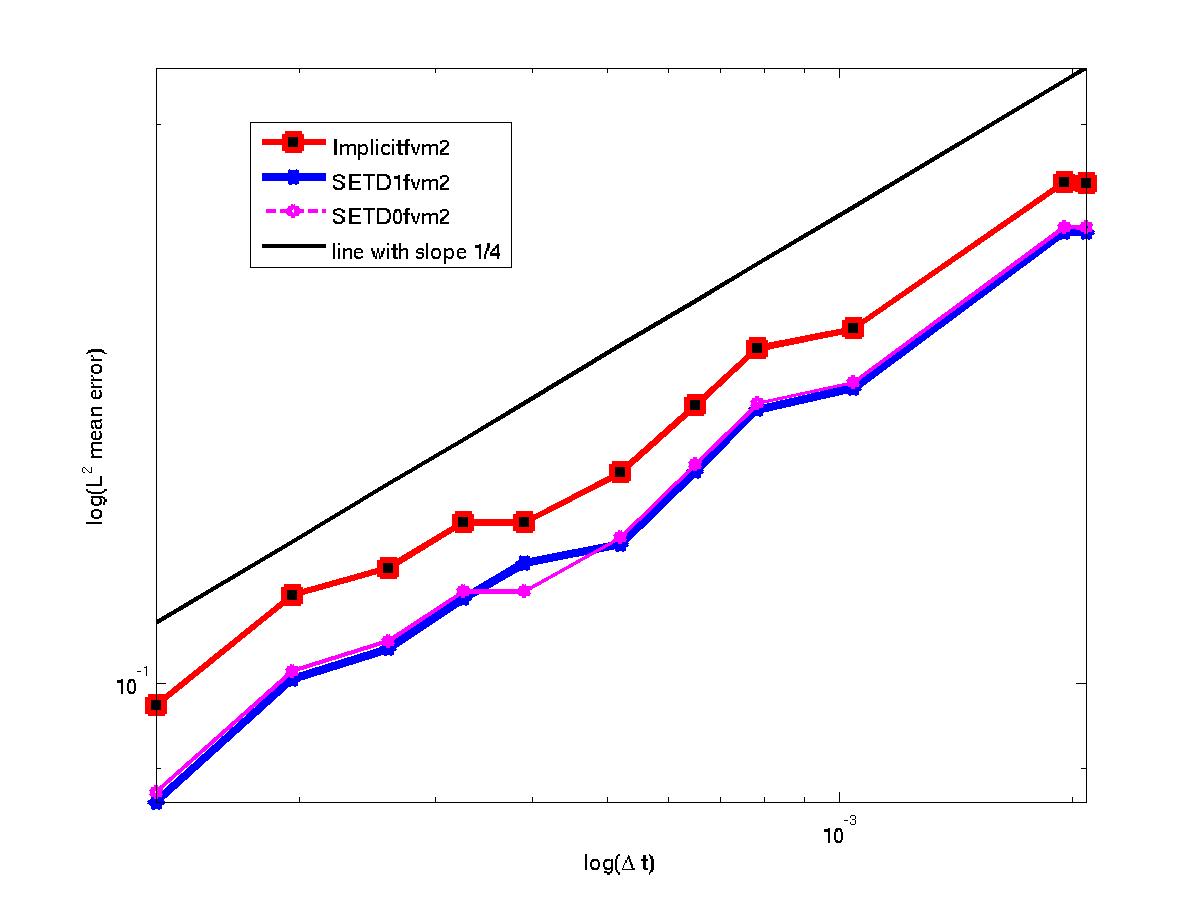}}
  \subfigure[]{
    \label{FIG022b}
    \includegraphics[width=0.48\textwidth]{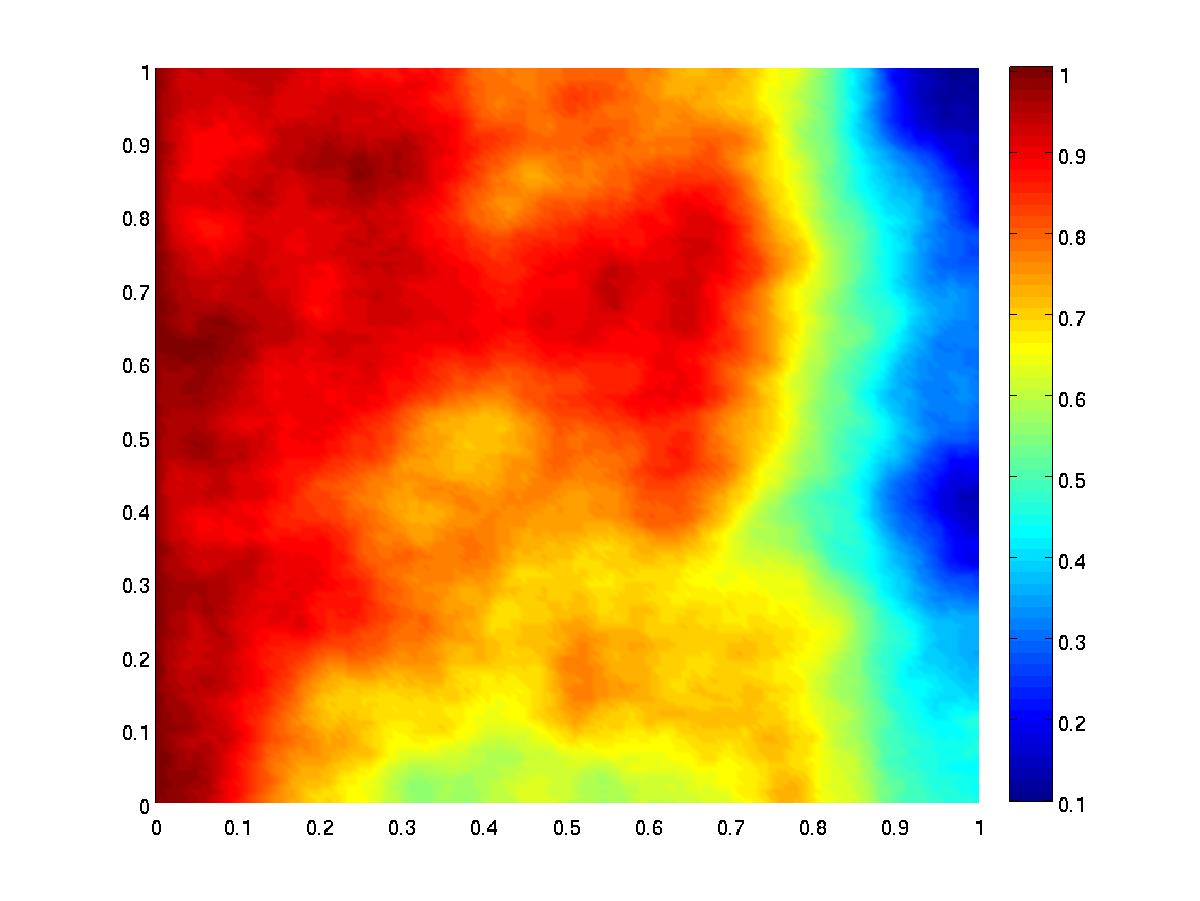}}
  \caption{ (a) Convergence of the root mean square $L^{2}$ norm at $T=1$ as
    a function of $\Dt$ with 30 realizations with $\Delta x= \Delta y
    = 1/160$, $X_{0}=0,\;\Gamma=0.01$ for homogeneous medium. The noise is white in time and in
    $H^{r}$ in space, $r=2$. 
    The temporal order of convergence in time is $1/4$ for all schemes. In (b) we plot a sample 'true solution' for $r=2$ with
    $\Dt=1/15360$.}    
  \label {FIG022}
\end{figure}


\begin{figure}[!th]
  \subfigure[]{
    \label{FIG024a}
    \includegraphics[width=0.48\textwidth]{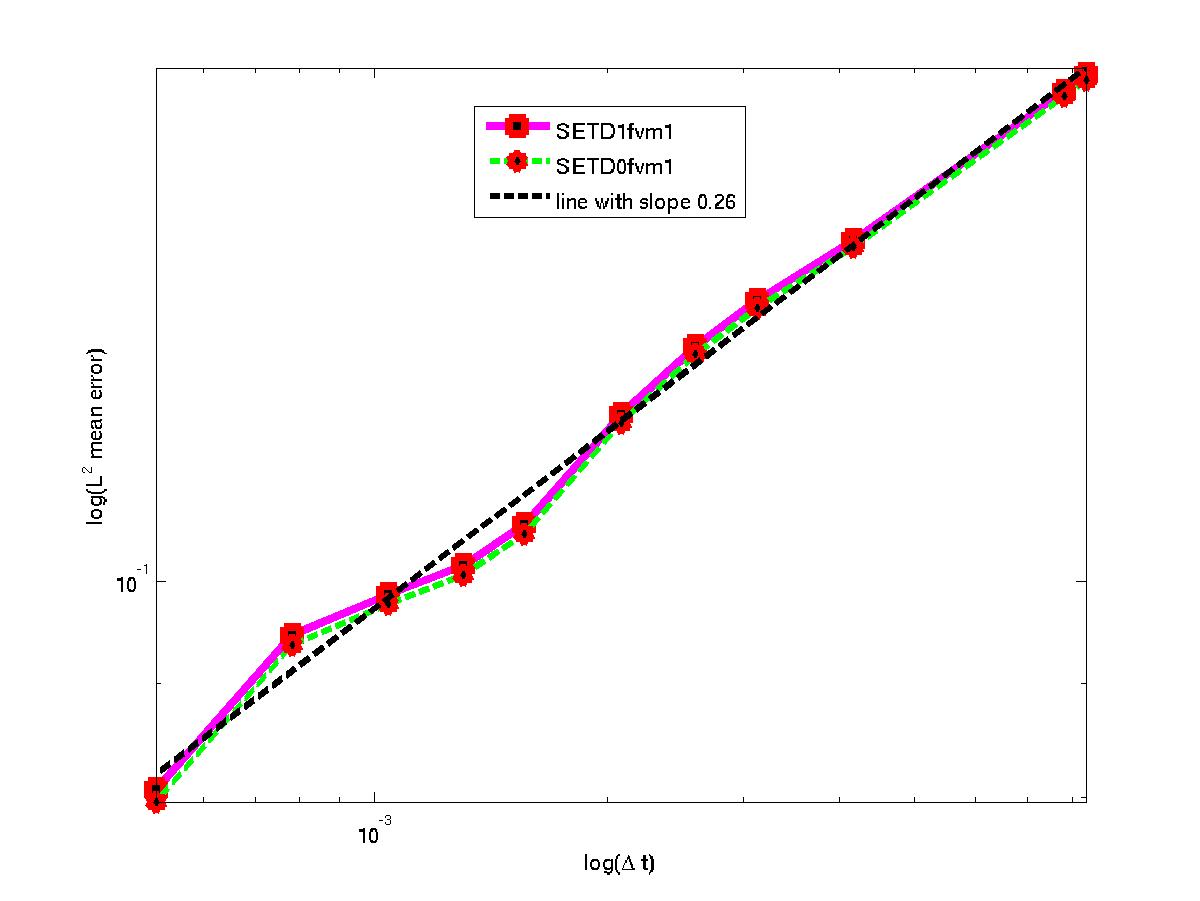}}
  \subfigure[]{
    \label{FIG024b}
    \includegraphics[width=0.48\textwidth]{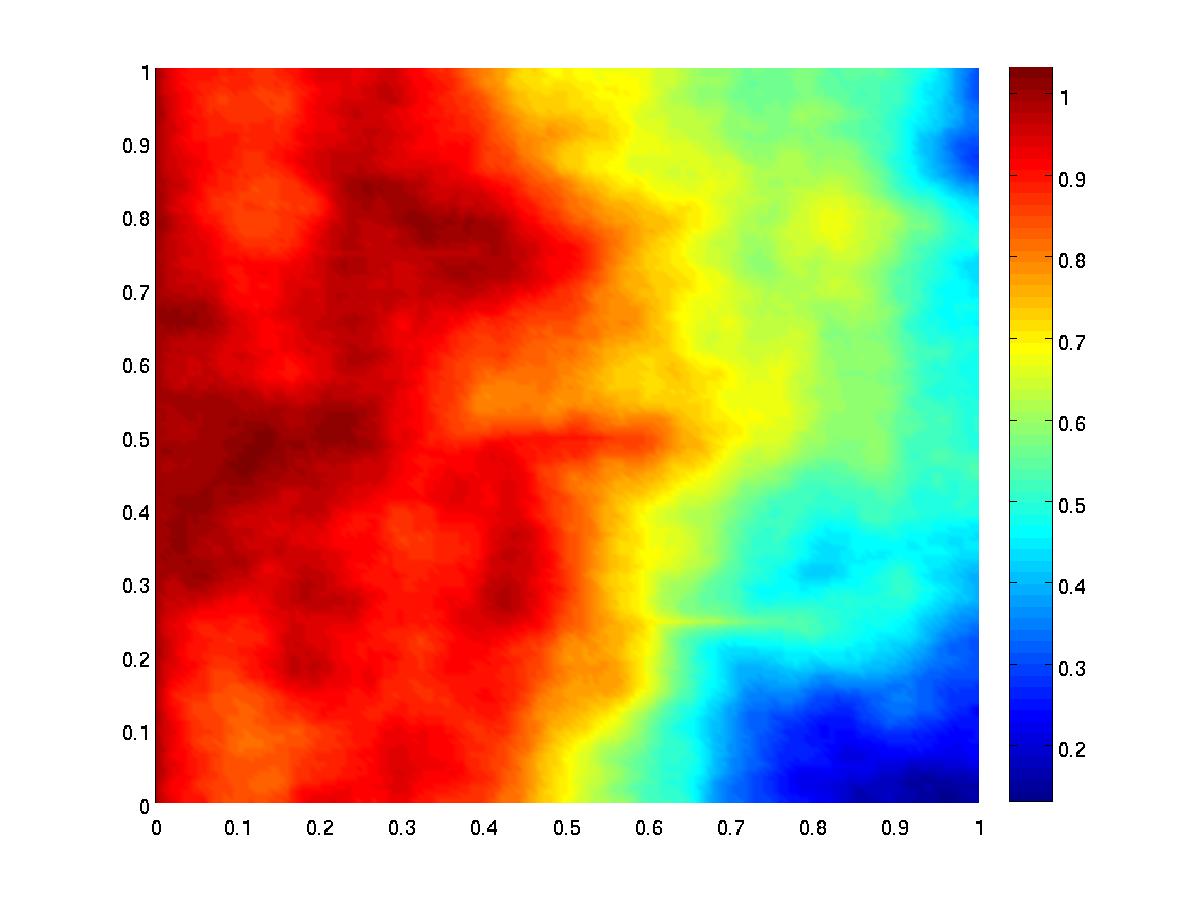}}
\subfigure[]{
    \label{FIG024c}
    \includegraphics[width=0.48\textwidth]{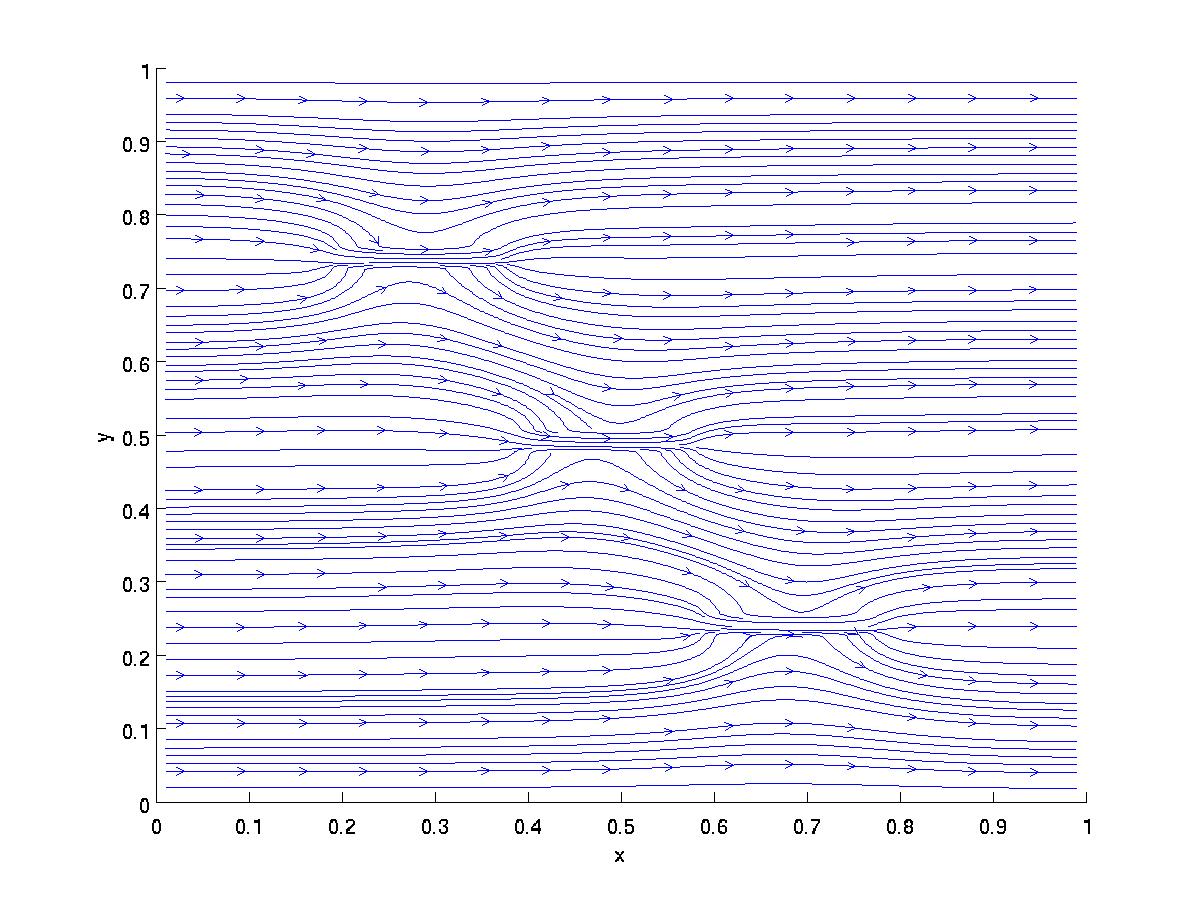}}
\subfigure[]{
  \label{FIG024d}
    \includegraphics[width=0.48\textwidth]{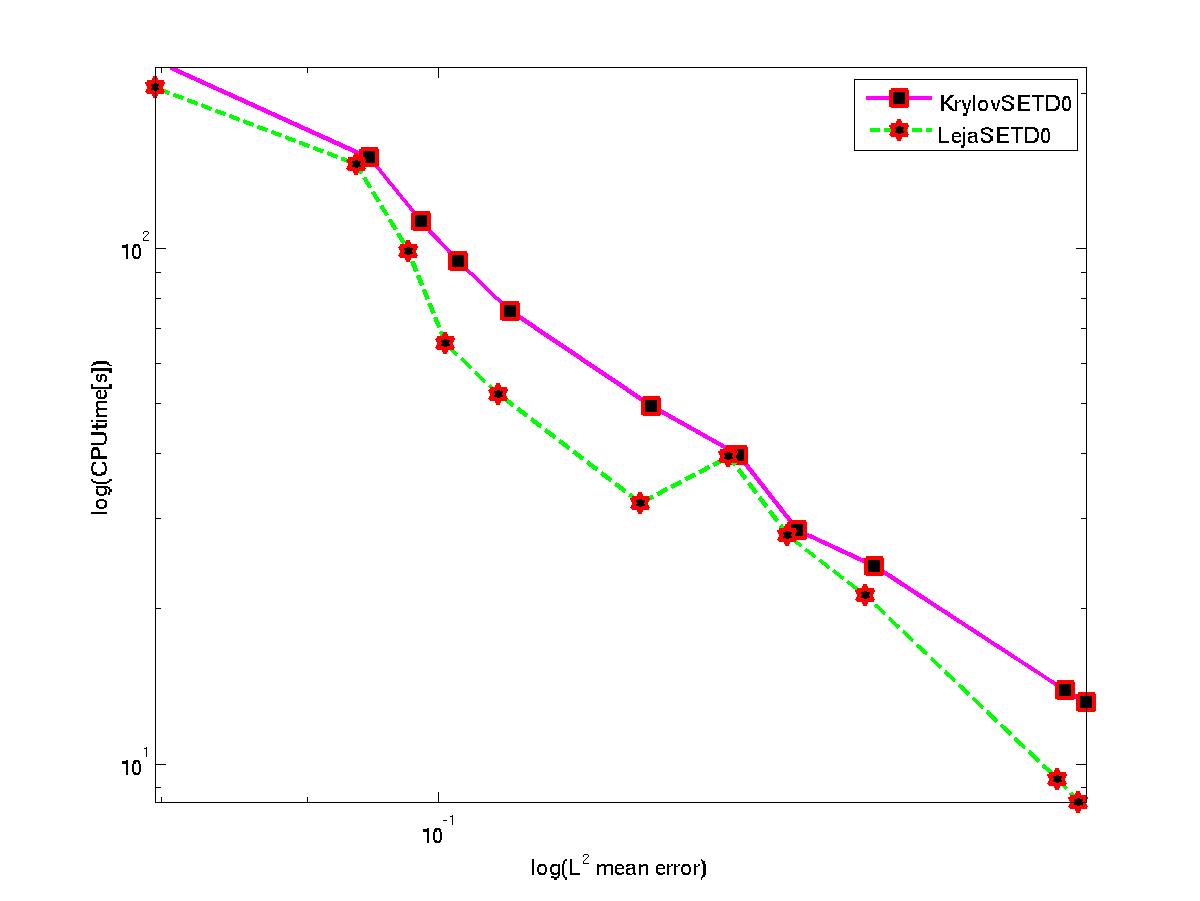}}
  \caption{ (a) Convergence of the root mean square $L^{2}$ norm at $T=1$ as
    a function of $\Dt$ with 30 realizations and $\Delta x= \Delta y
    = 1/160$, $X_{0}=0,\; \Gamma=0.01$ for heterogeneous medium.
    The noise is white in time and in
    $H^{r}$ in space, $r=2$. 
    The temporal order of convergence in time is $0.26$ (close to
    $1/4$) for the two methods. In (b) we plot a sample 'true solution' for $r=1$ with
    $\Dt=1/15360$.  In (c) we plot the streamline of the velocity
    field. In (d) the mean CPUtime for SETD0 using the Krylov and Leja
    points.  The P\'{e}clet number for the flow is $16.58$.}    
  \label {FIG024}
\end{figure}

\bibliographystyle{amsplain}

\end{document}